\documentclass[article,leqno]{article}
\usepackage[a4paper, total={7in, 9in}]{geometry}
\usepackage[english]{babel}
\usepackage{tgpagella}
\usepackage{graphicx}
\usepackage{xcolor}
\usepackage{float}% Required for inserting images
\usepackage{amsmath}
\usepackage{stmaryrd}
\usepackage{parskip}
\usepackage{capt-of}
\usepackage{amsfonts}
\usepackage{latexsym}
\usepackage{mathrsfs}
\usepackage{csquotes}
\usepackage{etoolbox}
\usepackage{hyperref}
\usepackage{enumitem}

\hypersetup{
  colorlinks=true,
  linkcolor=black,
  citecolor=teal
}
\usepackage{xcolor}
\usepackage{tikz-cd}
\usetikzlibrary{calc}
\tikzcdset{scale cd/.style={every label/.append style={scale=#1},
    cells={nodes={scale=#1}}}}
\usepackage{amsthm}
\usepackage{accents}
\usepackage{mathtools}
\usepackage{authblk}
\newcommand{\ZZ}{\mathbb{Z}}
\newcommand{\CC}{\mathbb{C}}

\newcommand{\QQ}{\mathbb{Q}}

\newcommand{\g}{\mathfrak{g}}

\newcommand{\sh}{\textnormal{sh}}

\newtheoremstyle{mytheoremstyle}%
{}%space above theorem
{}%space below theorem
{\itshape}%body font
{}%indentation amount
{\bfseries}%theorem head font
{.}%punctuation separator
{ }%space after theorem head
{\thmname{#1}\thmnumber{ #2}\thmnote{ \textnormal{#3}}}
\theoremstyle{mytheoremstyle}
\newtheorem{theorem}{Theorem}[section]
\newtheorem*{theorem*}{Theorem}
\newtheorem{corollary}[theorem]{Corollary}

\newtheorem{proposition}[theorem]{Proposition}
\newtheorem{lemma}[theorem]{Lemma}
\newtheorem*{lemma*}{Lemma}

\theoremstyle{definition}
\newtheorem{definition}[theorem]{Definition}
\newtheorem{example}[theorem]{Example}
\AtEndEnvironment{example}{\hfill$\diamondsuit$}

\theoremstyle{remark}
\newtheorem{remark}[theorem]{Remark}

\usepackage[colorinlistoftodos]{todonotes}

\usepackage[backend=biber,style=alphabetic,maxbibnames=9,maxalphanames=9, minalphanames=4]{biblatex}

\title{Braid group invariance of quantum tori and applications}
\author{Francesca Paganelli}

\date{}

\begin{document}

\maketitle
\begin{abstract}
We establish that the skew-symmetric bicharacter
on the group of $\ell$-weights of a quantum affine algebra $\mathcal{U}_q(\hat{\g})$ is invariant under Chari's braid group action. Furthermore, we define an extension of this bicharacter to the group generated by prefundamental $\ell$-weights and show that this invariance continues to hold under the extended braid group action of Frenkel--Hernandez. As immediate applications, we obtain braid group actions by automorphisms on the quantum tori associated respectively with the category of finite-dimensional representations of $\mathcal{U}_q(\hat{\g})$ and with the category $\mathcal{O}^{\mathfrak{b}}$ of its Borel subalgebra. In addition, we derive explicit formulas for quantum $QQ$-systems in all simply-laced types. Finally, we show that our extended bicharacter yields a quantization matrix compatible with the Hernandez--Leclerc cluster algebra categorified by $\mathcal{O}^{\mathfrak{b}}$ for all finite Dynkin types, extending Bittmann's construction in the simply-laced setting.
    
\end{abstract}
\tableofcontents
\section{Introduction}
\hypertarget{target:intro}{}
A well known fact in classical Lie theory is that the Weyl group $W$ associated with a simple, complex, finite-dimensional Lie algebra $\mathfrak{g}$ acts by isometries on $\mathfrak{h}^*$, the dual of the Cartan subalgebra. In other words, the symmetric bilinear form defined on $\mathfrak{h}^*$ (and in particular on the weight lattice) is invariant under the $W$-action. In this paper we prove a quantum affine analogue of the invariance phenomenon for a skew-symmetric bicharacter. Namely, let $I=\{1,...,n\}$ be the set of vertices of the Dynkin diagram of $\mathfrak{g}$ and let $q$ be a non-zero complex number, not root of unity. We denote by $\mathcal{M}$ the multiplicative group generated by the elements $\{Y_{i,q^r}\}_{i\in I,r\in\ZZ}$, which can be thought as affine analogs of the fundamental weights of $\mathfrak{g}$. By results of Chari \cite{chari}, the braid group $\mathcal{B}$ associated with $\mathfrak{g}$ acts on $\mathcal{M}$ by automorphisms. Then, we consider $\Omega$, the skew-symmetric bicharacter on $\mathcal{M}$ defined in \cite{nak04, vv, halgebraicqt}. Our main result, that we prove in greater generality in Theorem \ref{main thm}, is the following:
\begin{theorem*}\label{intro main thm}
    The bicharacter $\Omega$ is $\mathcal{B}$-invariant: for every $T\in\mathcal{B}$ acting on $\mathcal{M}$, and for every $M,N\in \mathcal{M}$ we have
    \[\Omega(T(M),T(N))=\Omega(M,N).\]
\end{theorem*}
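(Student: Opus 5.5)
Since $\mathcal{B}$ is generated by the Chari operators $T_1,\dots,T_n$ and their inverses, and $\Omega$ is a bicharacter, it is enough to show that a single generator $T_i$ preserves $\Omega$ on pairs of generators $Y_{j,q^r}, Y_{k,q^s}$ of $\mathcal{M}$. Invariance under $T_i^{-1}$ then follows automatically, and so does invariance on products. Two inputs will be used.

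\emph{Chari's action.} In Chari's description, $T_i$ fixes $Y_{j,a}$ for $j\neq i$ with $C_{ij}=0$. It sends $Y_{i,a}$ to $Y_{i,aq_i^{2}}^{-1}$ times a product of $Y_{j,\cdot}$ over neighbours $j$ of $i$, with spectral shifts fixed by the $q$-Cartan data. Equivalently, $T_i$ acts on the $\ell$-weight lattice as an affine analogue of the simple reflection $s_i$, realized through the simple $\ell$-root $A_{i,a}$. A convenient formulation is $T_i(M)=M\cdot \prod A_{i,\cdot}^{-u_{i}(M)}$, where $u_i(M)$ records the $i$-th coordinates of $M$ at the relevant spectral parameters. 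This is the analogue of $s_i(\lambda)=\lambda-\langle\lambda,\alpha_i^\vee\rangle\alpha_i$.

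\emph{The bicharacter.} I would use the known description of $\Omega$ through the inverse quantum Cartan matrix $\tilde C(z)$. Writing $\tilde C_{ij}(z)=\sum_m \tilde c_{ij}(m)z^m$, we have
\[\mathcal{N}(Y_{i,q^r},Y_{j,q^s})=\tilde c_{ij}(r-s-d_i)-\tilde c_{ij}(s-r-d_i)-\tilde c_{ij}(r-s+d_i)+\tilde c_{ij}(s-r+d_i),\]
with $\Omega=\mathcal N$ up to the normalisation of \cite{halgebraicqt}. The key input is the formula pairing $A_{i,a}$ with $Y_{j,b}$. Because $\tilde C(z)$ inverts $C(z)$, the quantity $\Omega(A_{i,q^r},Y_{j,q^s})$ is supported only at $j=i$ and $s-r\in\{\pm d_i\}$, giving $\pm\delta_{ij}$-type terms. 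This is the affine counterpart of $(\alpha_i,\omega_j)=d_i\delta_{ij}$.

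With these two facts, the argument is an expansion. Using $T_i(M)=M A^{-u(M)}$ and bilinearity,
\[\Omega(T_iM,T_iN)=\Omega(M,N)-\Omega(M,A^{u(N)})-\Omega(A^{u(M)},N)+\Omega(A^{u(M)},A^{u(N)}).\]
The two cross terms are computed from the $A$--$Y$ pairing. They depend only on the $i$-th coordinates of $M$ and $N$ at specific shifts, so they are explicit. The last term uses $\Omega(A_{i,a},A_{i,b})$, which by the same formula is supported at $b/a\in\{q_i^{\pm2}\}$. I then need to check that these three corrections cancel. I would do this on generators, splitting into cases:
\begin{itemize}
\item $j,k\neq i$: here at most one side is moved, or both are moved by $A$-factors with disjoint supports;
\item $j=i\neq k$;
\item $j=k=i$, with a case split on $s-r$ relative to $\pm 2d_i$.
\end{itemize}

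The main obstacle is getting the spectral shifts in Chari's formula to match exactly the shifts at which the $A$--$Y$ pairing is supported. The cancellation is not automatic: in the $j=k=i$ case, the cross terms and the $AA$ term interact only for small $|r-s|$, and boundary terms at $|r-s|=d_i$ or $2d_i$ must cancel exactly. I would first treat the simply-laced case, using $\tilde C(z)$ periodicity $\tilde c_{ij}(m+h)=-\tilde c_{ij^*}(m)$ only if needed. I would then handle the non-simply-laced case by keeping $d_i$ general, since the formulas above are uniform in $d_i$.
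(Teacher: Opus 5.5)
Your plan is correct, and it takes a genuinely different route from the paper's.

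The paper never proves this statement inside $\mathcal{M}$. It introduces the extension $\Omega'$ on $\mathcal{M}'$ and proves that $\Omega'$ is invariant under the Frenkel--Hernandez operators $T_i'$ (Theorem \ref{main thm}). That proof expands $\Omega'$ as sums over $\ell$ and applies the row relation of Lemma \ref{relation quantum cartan} term by term, with a separate symmetry argument through $\Delta_i$ when $i=j=k$. The result for $\Omega$ then follows from Lemma \ref{restrizione Lambda} and Theorem \ref{thm 3.4 fh}.

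You stay in $\mathcal{M}$ and copy the classical proof of $W$-invariance. With the paper's conventions, $T_i(Y_{i,q^r})=Y_{i,q^r}A_{i,q^{r-r_i}}^{-1}$ and all other generators are fixed. So everything rests on the single identity $\Omega(Y_{j,q^s},A_{i,q^p})=2\delta_{ij}(\delta_{p-s,-r_i}-\delta_{p-s,r_i})$.

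This identity is the \emph{column} counterpart of Lemma \ref{relation quantum cartan}. Since $A_{i,a}$ is built from the entries $C_{ki}$, it comes from $\sum_k\tilde C_{jk}(z)C_{ki}(z)=\delta_{ij}$, not from the row relation the paper uses. It is cleanest to compute with $A$ in the second slot, so that all shifts are $\pm r_j$; alternatively, convert using the first identity of Proposition \ref{prop tecnica 2}.

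Your argument gives a short, self-contained proof for $\Omega$ that needs neither the $\Psi$-variables nor $\Omega'$. The paper's argument gives the stronger invariance of $\Omega'$ on $\mathcal{M}'$, which it needs later for $\mathcal{Y}'_t$ and the quantum $QQ$-systems.

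Some slips to fix:
\begin{itemize}
\item $T_i$ fixes every $Y_{j,a}$ with $j\neq i$, not only those with $C_{ij}=0$; neighbours appear only in the image of $Y_{i,a}$. So your case $j,k\neq i$ is trivial (nothing moves), and the case $j=i\neq k$ follows at once from the support of the $A$--$Y$ pairing.
\item In the paper's convention the shift is $Y_{i,aq_i^{-2}}^{-1}$, not $Y_{i,aq_i^{2}}^{-1}$.
\item The pairing values are $\pm 2$, not $\pm 1$.
\end{itemize}

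The boundary terms you worry about need no case split. One finds $\Omega(Y_{i,q^r},A_{i,q^{s-r_i}})=2(\delta_{r,s}-\delta_{s-r,2r_i})$, $\Omega(A_{i,q^{r-r_i}},Y_{i,q^s})=2(\delta_{r-s,2r_i}-\delta_{r,s})$ and $\Omega(A_{i,q^{r-r_i}},A_{i,q^{s-r_i}})=2(\delta_{r-s,2r_i}-\delta_{s-r,2r_i})$. Hence the three corrections $-\Omega(Y,A)-\Omega(A,Y)+\Omega(A,A)$ cancel identically, uniformly in $r_i$. No periodicity argument or simply-laced first pass is needed.
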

We now provide further details regarding the motivation and context for this result.

\textit{Context.} For $\mathfrak{g}$ and $q$ as above, the quantum affine algebra $\mathcal{U}_q(\hat{\mathfrak{g}})$ was introduced by Drinfeld in the 80's as a deformation of the universal enveloping algebra of the affine Kac--Moody algebra $\hat{\mathfrak{g}}$ (in general $q$ could be a root of $1$, but we keep assuming that it is not the case). We refer to the ICM lecture \cite{drinfeld} for an introduction. The representation theory of these algebras is very rich and plays a significant role in the study of spectra of quantum integrable models (see for example the review in \cite{her_bourbaki} and references in \cite{fr}). In particular, the following categories have been extensively studied: the category $\mathcal{C}$ of finite dimensional $\mathcal{U}_q(\hat{\mathfrak{g}})$-modules and the category $\mathcal{O}^{\mathfrak{b}}$ of representations of the Borel subalgebra $\mathcal{U}_q(\hat{\mathfrak{b}})$ of $\mathcal{U}_q(\hat{\mathfrak{g}})$ (see \cite{hj12}). Actually, one can restrict the attention to the integer skeleton subcategories $\mathcal{C}_{\ZZ}\subset{\mathcal{\mathcal{C}}}$ and $\mathcal{O}^{\mathfrak{b}}_{\ZZ}\subset\mathcal{O}$; hence we present our results in this setting. Note that every simple in $\mathcal{C}_{\ZZ}$ is still simple as a $\mathcal{U}_q(\hat{\mathfrak{b}})$-module and $\mathcal{C}_{\ZZ}$ can be seen as a subcategory of $\mathcal{O}^{\mathfrak{b}}_{\ZZ}$.

Representations in the categories $\mathcal{C}_{\ZZ}$ and $\mathcal{O}^{\mathfrak{b}}_{\ZZ}$ decompose into direct sums of $\ell$-weight spaces, quantum-affine analogs of classical weight-spaces (\cite{chari94},\cite{hj12}). This decomposition is encoded in the $q$-\textit{character} of a representation. Namely, Frenkel--Reshetikhin introduced in \cite{fr} an injective ring morphism, called the $q$-character morphism, that assigns to every class in the Grothendieck ring $K(\mathcal{C}_{\ZZ})$ a Laurent polynomial in $\mathcal{Y}:=\ZZ\left[Y_{i,q^r}^{\pm 1}\mid i\in I,r\in \ZZ\right]$:
 \[\chi_q:K(\mathcal{C}_{\ZZ})\to \mathcal{Y}.\] Note that thanks to the injectivity of $\chi_q$, $K(\mathcal{C})$ is a commutative ring. The simple representations with highest $\ell$-weight $Y_{i,q^r}$, $i\in I,a\in\CC^*$, are called the fundamental representations. They are denoted by $L(Y_{i,q^r})$. To define the $q$-character of representations in $\mathcal{O}^{\mathfrak{b}}_{\ZZ}$ one has to extend the ring $\mathcal{Y}$, so that more relations in the Grothendieck ring can be captured, for example the Baxter relation. To do so, we can introduce the ring $\mathcal{Y}':=\ZZ\left[\Psi_{i,q^r}^{\pm 1}\mid i\in I,a\in\ZZ\right]$ with the relation:
 \[Y_{i,q^r}=\Psi_{i,q^{r-r_i}}\Psi_{i,q^{r+r_i}}^{-1}, \] for every $i\in I$ and $a\in\ZZ$, so that we have $\mathcal{Y}\subset\mathcal{Y}'$. Note that we use here a re-normalized version such as in \cite{ghl}, so that weight factors can be omitted.
  Now, as the objects in $\mathcal{O}^{\mathfrak{b}}_{\ZZ}$ can be infinite-dimensional, the $q$-character of a class in the Grothendieck ring $K(\mathcal{O}^{\mathfrak{b}}_{\ZZ})$ is an element of $\widetilde{\mathcal{Y}_e'}$, the completion by formal power series of $\mathcal{Y}'$. The simple representations with highest $\ell$-weight $\Psi_{i,q^r}$ (resp. $\Psi_{i,q^r}^{-1}$) are called the positive (resp. negative) prefundamental representations, they are infinite-dimensional and are denoted by $L(\Psi_{i,q^r})$ (resp. $L(\Psi_{i,q^r}^{- 1})$). 

 As mentioned above, Chari in \cite{chari} defined an action by automorphisms of $\mathcal{B}$ on $\mathcal{Y}$. Later, her definition was extended by Frenkel--Hernandez in \cite{fh23} on the larger ring $\mathcal{Y}'$. They defined automorphisms $T_i'$ of $\mathcal{Y}'$ such that:
 \begin{align*}
       T_i'(\Psi_{j,q^r})&:=\Psi_{j,q^r}\quad\textit{if}\ i\neq j,\\
       T_i'(\Psi_{i,q^r})&:=\Psi_{i,q^{r-2r_i}}^{-1}\prod_{\{k\in I\mid C_{ik}=-1\}}\Psi_{k,q^{r-r_i}}\prod_{\{k\in I\mid C_{ik}=-2 \}}\Psi_{k,q^r}\Psi_{k,q^{r-2}}\prod_{\{k\in I\mid C_{ik}=-3\}}\Psi_{k,q^{r-1}}\Psi_{k,q^{r+1}}\Psi_{k,q^{r-3}}.
    \end{align*}
 This definition is a crucial point in Frenkel--Hernandez's construction of a Weyl group action on a direct sum of completions of $\mathcal{Y}'$ (see \cite{fh22},\cite{fh23}). 

 Finally, we have to introduce the bicharacter $\Omega$. It was defined by Hernandez \cite{halgebraicqt} (with a different notation) in the algebraic construction of the quantum Grothendieck ring $K_t(\mathcal{C}_{\ZZ})$ of the category $\mathcal{C}_{\ZZ}$. The ring $K_t(\mathcal{C})$ is a non-commutative deformation, via a formal variable $t$, of $K(\mathcal{C})$, first introduced geometrically in simply-laced types by Nakajima \cite{nak04} and Varagnolo--Vasserot \cite{vv} using quiver varieties. Quantum Grothendieck rings are particularly important to deal with Kazhdan-Lusztig type algorithms to compute $q$-characters. The bicharacter $\Omega$ is used to define the quantum torus $(\mathcal{Y}_t,*)$, so that it contains $K_t(\mathcal{C}_{\ZZ})$. Specifically, $\mathcal{Y}_t$ is the $\ZZ[t^{\pm \frac{1}{2}}]$-algebra generated by $\{Y_{i,q^r}^{\pm 1}\}_{i\in I,r\in\ZZ}$ subject to the relations
 \[Y_{i,q^r}*Y_{j,q^s}=t^{\Omega(Y_{i,q^r},Y_{j,q^s})}Y_{j,q^s}*Y_{i,q^r},\] for every $i,j\in I$ and $r,s\in \ZZ$. We call this type of relations the $t$-\textit{relations}. The definition of $\Omega$ is given in terms of \textit{inverse quantum Cartan matrix} $\tilde{C}(z)$, where $z$ is an indeterminate (Definition \ref{def quantum cartan}). For every $i,j\in I$, the Laurent expansion at $z=0$ of the entry $\tilde{C}_{ij}(z)$ is
 \[\tilde{C}_{ij}(z)=\sum_{u\in\ZZ}\tilde{c}_{ij}(u)z^u\in \ZZ((z)).\]  Then, for every $i,j\in I$ and $r,s\in \ZZ$, Hernandez defined
 \[\Omega(Y_{i,q^r},Y_{j,q^s})=\tilde{c}_{ij}(m+r_i)-\tilde{c}_{ij}(m-r_i)+\tilde{c}_{ij}(-m-r_i)-\tilde{c}_{ij}(-m+r_i),\]
 where $m=s-r$ and $r_i$'s are the diagonal entries of the symmetrizer of the Cartan matrix of $\g$. 
 Note that the bicharacter $\Omega$ is defined for every finite Dynkin type. 

 Later, Bittmann \cite{b21} extended the definition of this bicharacter to the larger ring $\mathcal{Y}'$, though only for $\mathfrak{g}$ of simply-laced type. In this way she defined the quantum torus $\mathcal{Y}'_t$ (in analogy with $\mathcal{Y}_t$). Her motivation was the construction of a quantum Grothendieck ring for the category $\mathcal{O}^{\mathfrak{b}}_{\mathbb{Z}}$, so that $K_t(\mathcal{O}^{\mathfrak{b}}_{\mathbb{Z}})$ defined in \cite{b21} lives inside a completion of $\mathcal{Y}'_t$.

 \textit{New results.} This paper begins with a review of the coefficients of the Laurent expansion of the inverse quantum Cartan matrix. We establish a system of relations between the coefficients $\tilde{c}_{ij}(u)$ (Lemma \ref{relation quantum cartan}), which is used several times throughout the paper.
% \begin{lemma*}[(Lem. \ref{relation quantum cartan})]
%     If ($i=j$ and $u\neq 0$) or ($i\neq j$ and $u\in\ZZ$) we have
%            \begin{gather*}
%                \tilde{c}_{ij}(u-r_i)+\tilde{c}_{ij}(u+r_i)-\sum_{\{k\in I\mid c_{ik=-1}\}}\tilde{c}_{kj}(u)-\sum_{\{k\in I\mid c_{ik=-2}\}}\Big(\tilde{c}_{kj}(u+1)+\tilde{c}_{kj}(u-1)\Big)-\\-\sum_{\{k\in I\mid c_{ik=-3}\}}\Big(\tilde{c}_{kj}(u-2)+\tilde{c}_{kj}(u)+\tilde{c}_{kj}(u+2)\Big)=0.
%            \end{gather*}
% \end{lemma*}
This generalizes Lemma 3.2.4 in \cite{b21} to all finite Dynkin types. To present our results in the greatest possible generality, we also extend the definition of the bicharacter on $\mathcal{Y}'$ to all finite Dynkin types. Namely, for every underlying Dynkin type we define a skew-symmetric map $\Omega'$ (Definition \ref{def F}) on $\mathcal{Y}'\times\mathcal{Y}'$, that coincides with the bicharacter in \cite{b21} in the simply-laced case and restricts to the one in \cite{halgebraicqt} on $\mathcal{Y}\times\mathcal{Y}$ (see Lemma \ref{restrizione Lambda}). Having these definitions, using Lemma \ref{relation quantum cartan} we prove Theorem \ref{intro main thm} and the more general version Theorem \ref{main thm}, establishing the invariance of $\Omega'$ under the action of the braid group on $\mathcal{Y}'$ (this result is also new in simply-laced types). The rest of the paper is dedicated to some applications of our definition of $\Omega'$ and of the invariance Theorem. 
 
As a first direct application, using the bicharacter $\Omega'$, we define the quantum torus $(\mathcal{Y}'_t,\star)$, which naturally generalizes the construction in \cite{b21} to all Dynkin types. In particular, the quantum torus $\mathcal{Y}_t$ is a subring of $\mathcal{Y}_t'$ (Proposition \ref{inclusion quantum tori}). Then, the braid group-invariance theorem (Theorem \ref{main thm}) implies the existence of a braid group action by automorphisms on $\mathcal{Y}_t'$ (Theorem \ref{teo q tori}). This was in fact one of the motivations for this work: in \cite{fh22}, to construct a Weyl group action on a direct sum of completions of $\mathcal{Y}$, the authors defined automorphisms $\Theta_i$, $i\in I$, such that the leading term of $\Theta_i(Y_{i,q^r})$ is governed by the braid group operator $T_i$. Our new understanding of the braid group action on the quantum tori $\mathcal{Y}'_t$ and $\mathcal{Y}_t$ is the first step toward quantizing the Weyl group action in \cite{fh23} and \cite{fh22}. We plan to discuss this in another paper. Note that it is a desirable result since it would reveal new symmetries on the $(q,t)$-\textit{characters}, defined geometrically by Nakajima \cite{nak04} and algebraically by Hernandez \cite{halgebraicqt} (and completely available at the moment only for modules in $\mathcal{C}_{\ZZ}$).

We then focus on other applications. We begin Section \ref{section QQ} by recalling results from \cite{fh23} and \cite{ghl} concerning cluster algebras and $QQ$-systems, the latter being important relations in $K(\mathcal{O}^{\sh}_{\ZZ})$, the Grothendieck ring of the category $\mathcal{O}^{\sh}_{\ZZ}$ of representations of shifted quantum affine algebras (we refer the reader to the foundational papers \cite{ft}, \cite{sqaahernandez} for definitions and properties on these algebras). Indeed, when $\mathfrak{g}$ is simply-laced, in \cite{ghl} the authors constructed a cluster algebra $\mathcal{A}$ such that $K(\mathcal{O}^{\sh}_{\ZZ})$ is isomorphic to a completion of $\mathcal{A}$. Under this correspondence, $QQ$-systems arise as a subset of the exchange relations in $\mathcal{A}$. It is explained in \cite{ghl} that $K(\mathcal{O}^{\mathfrak{b}}_{\ZZ})$ is a subring of $\mathcal{O}^{\sh}$. In addition, thanks to the $q$-character morphism (defined also in this setting), $K(\mathcal{O}^{\sh}_{\ZZ})$ is isomorphic to $\widetilde{\mathcal{Y}_e'}$, the completion by formal power series of $\mathcal{Y}'$. However, the variables appearing in the $QQ$-systems, called $\underline{Q}$-variables, have leading term determined via the braid group action on $\mathcal{Y}'$. In \cite{pag25} we constructed a quantization $\mathcal{A}_t$ of $\mathcal{A}$ (in the sense of quantum cluster algebra). In particular, $\mathcal{A}_t$ is a subring of the quantum torus $(\mathcal{Y}'_t,\star)$. We showed that quantum $QQ$-system can be obtained as quantum exchange relations in $\mathcal{A}_t$, though we could write them explicitly only in type $A_1$. By applying Theorem \ref{main thm}  and some key properties about $\underline{Q}$-variables we are now able to state precisely the quantum $QQ$-systems in general:
\begin{theorem*}[(Thm. \ref{preciseqQQ})]
    For $(i,r)\in V$ and $w\in W$ such that $ws_i>w$, the following identity holds in $\mathcal{A}_t$:
    \begin{equation}
        \underline{Q}_{w(\omega_i),q^{r-2}}\star\underline{Q}_{ws_i(\omega_i),q^r}= t^A\underline{Q}_{ws_i(\omega_i),q^{r-2}}\star\underline{Q}_{w(\omega_i),q^r} +\prod_{\{j\in I\mid C_{ij}=-1\}}\underline{Q}_{w(\omega_j),q^{r-1}},
    \end{equation}
    where 
    \[A=-\frac{3}{2}-\frac{\tilde{c}_{ii}(3)}{2}+\sum_{\{k\in I\mid C_{ik}=-1\}}\frac{\tilde{c}_{ki}(2)}{2},\] and $s_i$ denotes a simple reflection in $W$.
\end{theorem*}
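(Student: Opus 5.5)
The plan is to reduce the statement to the case $w=e$ by braid group invariance, and then to compute the exponent $A$ from the $t$-commutation relations of leading monomials.

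\textbf{Step 1: the known relation in $\mathcal{A}_t$.} From \cite{pag25}, the quantum $QQ$-system is a quantum exchange relation in $\mathcal{A}_t$. So an identity of the form
\[\underline{Q}_{w(\omega_i),q^{r-2}}\star\underline{Q}_{ws_i(\omega_i),q^r}= t^{A_w}\underline{Q}_{ws_i(\omega_i),q^{r-2}}\star\underline{Q}_{w(\omega_i),q^r} +t^{B_w}\prod_{\{j\in I\mid C_{ij}=-1\}}\underline{Q}_{w(\omega_j),q^{r-1}}\]
holds for some half-integers $A_w,B_w$. The products on the right may be taken in a suitable normalized (bar-invariant) order. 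The task is to determine the exponents and to show they do not depend on $w$.

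\textbf{Step 2: reading off exponents from leading terms.} Each $\underline{Q}$-variable is a series in $\widetilde{\mathcal{Y}'_e}$. Its leading monomial is obtained from $\Psi_{i,q^r}$ by applying a braid group element $T'_w$ (the lift of $w$ along a reduced expression, using the Frenkel--Hernandez operators $T_i'$). In the quantum torus $\mathcal{Y}'_t$, the exponents $A_w$ and $B_w$ are determined by comparing leading monomials of both sides:
\begin{itemize}
\item $A_w$ is a combination of $\Omega'$ evaluated on pairs of leading monomials, e.g.\ $\Omega'\big(T'_w(\Psi_{i,q^{r-2}}),T'_{w s_i}(\Psi_{i,q^{r}})\big)$ and similar terms;
\item $B_w$ arises from normalizing the monomial products.
\end{itemize}
Write $T'_{ws_i}=T'_wT'_i$, which is valid since $ws_i>w$. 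Theorem \ref{main thm} then gives
\[\Omega'(T'_w M,T'_w N)=\Omega'(M,N).\]
Hence every such exponent equals the corresponding exponent for $w=e$, computed with the monomials $\Psi_{i,q^{r-2}}$, $T'_i(\Psi_{i,q^r})$, $\Psi_{i,q^{r}}$, $T'_i(\Psi_{i,q^{r-2}})$ and $\Psi_{j,q^{r-1}}$. For the non-leading terms we do not need any control: the identity is known to hold in $\mathcal{A}_t$, and only its coefficients are at stake.

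\textbf{Step 3: the computation for $w=e$.} Use the explicit formula for $T'_i(\Psi_{i,q^r})$ (simply-laced, so only $C_{ik}=-1$ neighbours appear) together with Definition \ref{def F} of $\Omega'$. Expand $\Omega'$ on the prefundamental variables in terms of the coefficients $\tilde{c}_{ij}(u)$. Then simplify with Lemma \ref{relation quantum cartan}, which rewrites $\tilde{c}_{ii}(u\pm1)$ in terms of neighbouring $\tilde{c}_{ki}(u)$. The aim is to collapse everything to
\[A=-\tfrac32-\tfrac{\tilde{c}_{ii}(3)}2+\sum_{k}\tfrac{\tilde{c}_{ki}(2)}2,\]
with the small-argument values $\tilde{c}_{ii}(1)=1$, $\tilde{c}_{ii}(u)=0$ for $u\le0$, and $\tilde{c}_{ki}(1)=0$ producing the constant $-\tfrac32$. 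Simultaneously one checks that $B_w=0$: the commuting product $\prod_j \underline{Q}_{w(\omega_j),q^{r-1}}$, with its bar-invariant normalization, matches the leading term of the left side with exponent zero. This check again reduces to $w=e$ by invariance.

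\textbf{Main obstacle.} The bookkeeping in Step 3 is the hard part. The issue is the precise form of $\Omega'$ on $\Psi$'s, which involves sums/antiderivatives of $\tilde{c}$ coefficients, and the sign conventions of the normalized products. I would first try $A_1$, where the answer is known from \cite{pag25}, to fix conventions, and then do the general computation via Lemma \ref{relation quantum cartan}.
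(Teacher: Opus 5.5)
Your skeleton matches the paper's: the exchange relation with unknown exponents from \cite{pag25}, exponents expressed through $\Omega'$ on leading monomials, removal of $T'_w$ by Theorem \ref{main thm}, then an explicit simply-laced computation. The gap is in Step 2, which carries the real content. You never say which combination of $\Omega'$-values equals $A$, and the mechanism you invoke, comparing leading monomials of both sides, cannot determine it.

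At $w=e$, the leading monomial of $\underline{Q}_{w(\omega_i),q^{r-2}}\star\underline{Q}_{ws_i(\omega_i),q^r}$ is $\Psi_{i,q^{r-2}}\cdot\Psi_{i,q^{r-2}}^{-1}\prod_j\Psi_{j,q^{r-1}}=\prod_j\Psi_{j,q^{r-1}}$. This is the leading monomial of the $\prod_j$ term. By contrast, $\underline{Q}_{ws_i(\omega_i),q^{r-2}}\star\underline{Q}_{w(\omega_i),q^r}$ has leading monomial $\Psi_{i,q^{r-4}}^{-1}\Psi_{i,q^r}\prod_j\Psi_{j,q^{r-3}}$, which is $\prod_j\Psi_{j,q^{r-1}}$ times the inverse simple-root factor $(Y_{i,q^{r-3}}Y_{i,q^{r-1}})^{-1}\prod_jY_{j,q^{r-2}}$. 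So a leading-coefficient comparison sees only $B$. The factor $t^A$ is the coefficient of a term that is subleading on the left, which is exactly the part of the series you say you need not control.

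Your sample quantity $\Omega'\bigl(T'_w(\Psi_{i,q^{r-2}}),T'_{ws_i}(\Psi_{i,q^r})\bigr)$ also pairs the two exchanged variables. These never lie in a common seed and do not quasi-commute, so this is not a legitimate input (it happens to vanish).

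The missing idea is the toric-frame form of a quantum exchange relation. Set $X'=\underline{Q}_{w(\omega_i),q^{r-2}}$, $Y=\underline{Q}_{ws_i(\omega_i),q^{r-2}}$, $Z=\underline{Q}_{w(\omega_i),q^r}$ and $P_j=\underline{Q}_{w(\omega_j),q^{r-1}}$, all in one seed. Mutation at $X'$ gives $\underline{Q}_{ws_i(\omega_i),q^r}=M(-e_{X'}+e_Y+e_Z)+M(-e_{X'}+\sum_je_{P_j})$, where $M(\cdot)$ denotes the normalized (commutative) monomials of the seed. Left multiplication by $X'$ then yields
\[A=\tfrac12\Lambda(X',Y)+\tfrac12\Lambda(X',Z)-\tfrac12\Lambda(Y,Z),\qquad B=\tfrac12\sum_j\Lambda(X',P_j),\]
with $\Lambda$ the quasi-commutation form of that seed; this is the paper's formula. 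Only for such same-seed pairs does \cite[Theorem 4.15]{pag25} let you replace $\Lambda$ by $\Omega'$ of the leading monomials, after which Theorem \ref{main thm} strips $T'_w$.

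The remaining computation needs only Proposition \ref{prop tecnica}:
\begin{itemize}
\item $\Omega'(\Psi_{i,q^{r-2}},\Psi_{i,q^{r-4}})=1$ and $\Omega'(\Psi_{i,q^{r-2}},\Psi_{i,q^{r}})=-1$;
\item $\Omega'(\Psi_{i,q^{r-4}},\Psi_{i,q^{r}})=-1-\tilde{c}_{ii}(3)$ and $\Omega'(\Psi_{k,q^{r-3}},\Psi_{i,q^{r}})=-\tilde{c}_{ki}(2)$;
\item $\Omega'(\Psi_{i,q^{r-2}},\Psi_{k,q^{r-3}})=\Omega'(\Psi_{i,q^{r-2}},\Psi_{j,q^{r-1}})=0$.
\end{itemize}
These give the stated $A$ and $B=0$. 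Note that Lemma \ref{relation quantum cartan} at $u=2$, $i=j$, gives $\sum_k\tilde{c}_{ki}(2)=1+\tilde{c}_{ii}(3)$, so in fact $A=-1$. Your plan to simplify with the Lemma until the displayed formula appears is therefore not well posed.

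Finally, you should prove the pairwise commutativity of the $P_j$ rather than assume it. The paper does this from Theorem \ref{main thm} together with $\Omega'(\Psi_{h,q^{r-1}},\Psi_{k,q^{r-1}})=0$.
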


Finally, we apply our generalized definition of the bicharacter $\Omega'$ in the context of monoidal categorification of cluster algebras. This branch of research has been extensively explored, we recall some of the main contributions \cite{hlDuke}, \cite{hl16o}, \cite{kkopI}, \cite{kkopII}, \cite{qin17}. The category $\mathcal{O}^{\mathfrak{b}}_{\ZZ}$ has two important subcategories, denoted by $\mathcal{O}^{\mathfrak{b},+}_{\ZZ}$ and $\mathcal{O}^{\mathfrak{b},-}_{\ZZ}$, obtained imposing some conditions on the $\ell$-weights. Both $\mathcal{O}^{\mathfrak{b},\pm}_{\ZZ}$ have $\mathcal{C}_{\ZZ}$ as subcategory. In \cite{hl16o} the authors proved that for all Dynkin types $K(\mathcal{O}^{\mathfrak{b},\pm }_{\ZZ})$, the Grothendieck rings of these categories, admit a cluster algebra structure, providing an infinite quiver $\Gamma$, with vertex set $V'\subset I\times \ZZ$, which serves as initial seed for both categories. Let us denote $\mathcal{A}'$ the cluster algebra (to not be confused with $\mathcal{A}$). Bittmann \cite{b21} provided, for simply laced-types, a quantization of $\mathcal{A}'$, using $\mathcal{Y}_t$ as ambient quantum torus. She then defined the quantum Grothendieck ring $K_t(\mathcal{O}^{\mathfrak{b},+}_{\ZZ})$ as a completion of the quantum cluster algebra and proved that $K_t(\mathcal{C}_{\ZZ})$ is a subring of $K_t(\mathcal{O}^{\mathfrak{b},+}_{\ZZ})$.  In Section \ref{sec q cluster} we complete this program, defining a quantization of $\mathcal{A}$ for all Dynkin types. Namely, we define a skew-symmetric integer matrix $\Lambda$ using the skew-symmetric bicharacter $\Omega'$ and we prove in Theorem \ref{thm compatibility} that $\Lambda$ is compatible with the quiver $\Gamma$. By definition this means that the product of the exchange matrix $B$ of $\Gamma$ with $\Lambda$ is a non-zero multiple of the identity matrix $\mathrm{Id}_{V'}$. In \cite{pag25}, a fundamental ingredient to obtain $\mathcal{A}_t$ is the quantization of $\mathcal{A}'$ provided in \cite{b21}, but we worked only in simply-laced type as the results in \cite{ghl} were (and still are) available only in this setting. Therefore, Theorem \ref{thm compatibility} indicates that once the construction in \cite{ghl} is extended to all types, the corresponding quantization will be both possible and natural.

\textit{Organization.} In Section \ref{sec quantum cartan} we recall the definition of the quantum Cartan matrix, of its inverse, and we review some of their properties. We also prove Lemma \ref{relation quantum cartan}, which is crucial throughout the paper. Then we present the bicharacters from \cite{halgebraicqt} and \cite{b21} and we give our general definition of $\Omega'$. In Section \ref{sec braid} we review the braid group actions from \cite{chari} and \cite{fh23} and we prove our main theorem (Theorem \ref{main thm}) about the invariance of $\Omega'$ under these actions. In Section \ref{sec q tori} we consider the quantum torus $(\mathcal{Y}'_t,\star)$ and prove that it admits a braid group action. Section \ref{section QQ} is dedicated to quantum $QQ$-systems, that we establish completely for every simply-laced type. In Section \ref{sec q cluster} we construct a skew-symmetric matrix $\Lambda$ sing $\Omega'$ and we prove that it is compatible with the cluster algebra categorified by the category $\mathcal{O}^{\mathfrak{b},+}_{\ZZ}$, extending to all Dynkin types the construction done in \cite{b21} in simply-laced setting. 
\subsection*{Acknowledgments} 
I would like to thank my PhD supervisors, David Hernandez and Giovanni Cerulli Irelli, for their constant guidance and encouragements. 
\section{Quantum Cartan matrices and bicharacters}\label{section q cartan}
In this section we recall the notion of quantum Cartan matrix $C(z)$ and its inverse $\tilde{C}(z)$, which is a square matrix with entries in $\QQ(z)$. In particular, we can consider the Laurent expansion at $z=0$: $\tilde{C}_{ij}(z)=\sum_{u\in\ZZ}\tilde{c}_{ij}(u)z^u$. We prove that the coefficients $\tilde{c}_{ij}(u)$  satisfy a key relation for all finite Dynkin type. We will use such relation several times throughout the paper. Then we recall some rings of Laurent polynomials in infinite variables, $\mathcal{Y}\subset \mathcal{Y}'$ and we define a skew-symmetric bicharacter on the multiplicative group of monomials in $\mathcal{Y}'$. Our definition extends to all finite Dynkin types the definition given by Bittmann \cite{b21} and is compatible with the original definition in \cite{halgebraicqt} (given for $\mathcal{Y}$) . 
\subsection*{Quantum Cartan matrices}\label{sec quantum cartan}
Let $\mathfrak{g}$ be a finite-dimensional, complex, simple Lie algebra. The classification of these algebras is given by finite-type Dynkin diagrams and we follow the conventions from \cite{kac}. Let $n$ be the rank of $\mathfrak{g}$ and $I=\{1,...,n\}$. Let $C=(C_{ij})_{i,j\in I}$ be the Cartan matrix of $\mathfrak{g}$. Let $D=\mathrm{diag}(r_i\mid i\in I)$ be the unique matrix such that $DC$ is symmetric and the $r_i$'s are positive, relatively prime integers. When $\g$ is of simply-laced type, then $C$ is symmetric and so in this case $D=\mathrm{I}_n$, the identity matrix. Let $z$ be an indeterminate. For every integer $k$, we use the standard convention
\begin{equation}
    [k]_z=\frac{z^{k}-z^{-k}}{z-z^{-1}}.
\end{equation}
\begin{remark}\label{remark sugli r_i}
    The condition on $DC$ to be symmetric means
    \[C_{kj}r_k=C_{jk}r_j,\quad \forall k,j\in I.\]
    Thus, if $C_{kj}<-1$ (namely $C_{kj}=-2$ or $C_{kj}=-3$), then by the structure of Cartan matrices we have $C_{jk}=-1$. Thus $r_k=1$, by the coprimality of the $(r_i)_{i\in I}$.
\end{remark}
\begin{definition}\label{def quantum cartan}
     The \textit{quantum Cartan matrix} associated with $C$ is the $I\times I$-matrix $C(z)=(C_{ij}(z))_{i,j\in I}$ with entries in $\ZZ[z^{\pm 1}]$ given by
     \[C_{ij}(z)=\begin{cases}
         z^{r_i}+z^{-r_i}\quad &\text{if}\ i=j,\\
         [C_{ij}]_z\quad &\text{if}\ i\neq j.
     \end{cases}\]
\end{definition}
Since the Cartan matrix $C(1)$ is invertible, $C(z)$ is invertible too and we denote by $\tilde{C}(z)$ its inverse, whose entries are rational functions in $\QQ(z)$. For each entry of the inverse quantum Cartan matrix we can consider its Laurent expansion at $z=0$:
\[\tilde{C}_{ij}(z)=\sum_{u\in\ZZ}\tilde{c}_{ij}(u)z^u\in \QQ((z)).\] 
\begin{proposition}[{\cite[Lemma 3.3]{fo_qdata}}]\label{prop tecnica}
    For any $i,j$ in $I$ and $u$ in $\ZZ$, we have that $\tilde{c}_{ij}(u)\in\ZZ$. Moreover:
    \begin{enumerate}
        \item $\tilde{c}_{ij}(u)=0$ if $u<r_i$,\label{1}
        \item $\tilde{c}_{ij}(r_i)=\delta_{ij}$.\label{2}
    \end{enumerate}
\end{proposition}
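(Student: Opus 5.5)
We plan to use the identity $C(z)\tilde{C}(z)=\mathrm{Id}$, rewritten as a recursion on Laurent coefficients, together with the fact that $\tilde{C}(z)$ is the expansion of a rational function with no pole of very negative order.

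\textbf{Step 1: structure of $C(z)$.} Factor the diagonal as $C(z)=\mathrm{diag}(z^{-r_i})\bigl(\mathrm{Id}-N(z)\bigr)$, where $N(z)$ is a matrix with entries in $z\ZZ[z]$. Indeed:
\begin{itemize}
\item $z^{r_i}C_{ii}(z)=1+z^{2r_i}$;
\item for $i\neq j$, $z^{r_i}[C_{ij}]_z=-z^{r_i}(z^{|C_{ij}|-1}+\dots+z^{1-|C_{ij}|})$.
\end{itemize}
By Remark \ref{remark sugli r_i}, if $|C_{ij}|\ge 2$ then $r_i=1$. The lowest power is then $z^{r_i-|C_{ij}|+1}$, with exponent $\ge 0$. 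Equality could occur only if $|C_{ij}|=r_i+1$, i.e. $r_i=1$ and $|C_{ij}|=2$, or $r_i\ge 2$ with $|C_{ij}|=r_i+1\ge 3$. The latter forces $r_i=1$, a contradiction.

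The case $r_i=1$, $C_{ij}=-2$ gives a term $z^0$, so we must be careful. For example, in type $B_2$ with $C_{12}=-2$ and $r_1=1$, we get $z[{-2}]_z=-(z^2+1)$, which has a constant term. So this naive triangularity fails. We therefore refine the approach.

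\textbf{Step 2: refined recursion.} Work instead with the symmetric form. $DC$ symmetric means the matrix $B(z)=C(z)\,\mathrm{diag}([r_j]_z)$ is symmetric. From $C(z)\tilde{C}(z)=\mathrm{Id}$, taking the coefficient of $z^u$ in row $i$ gives the recursion
\[
\tilde{c}_{ij}(u-r_i)=\delta_{ij}\delta_{u,0}-\tilde{c}_{ij}(u+r_i)-\sum_{k\neq i}[\text{coefficients of }[C_{ik}]_z]\,\tilde{c}_{kj}(\cdot).
\]
This is essentially Lemma \ref{relation quantum cartan}, which we would prove first or alongside. To prove claims 1 and 2, it is cleaner to use the known closed forms of $\tilde{C}(z)$: each entry is a polynomial divided by $1+z^{2h^\vee r}$-type factors. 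The vanishing for $u<r_i$ is equivalent to $z^{-r_i}\tilde{C}_{ij}(z)\in\ZZ[[z]]$, and claim 2 is the statement that its constant term is $\delta_{ij}$.

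\textbf{Step 3: the inverse factorization.} Take instead $\tilde{C}(z)=\bigl(\mathrm{Id}-N'(z)\bigr)^{-1}\mathrm{diag}(z^{r_i})$, where the off-diagonal part is normalised by the column index via $C(z)=\bigl(\mathrm{Id}-N'(z)\bigr)\mathrm{diag}(z^{-r_j})$. Then $N'_{ij}=-z^{r_j}(\ldots)$ for $i\neq j$, and $N'_{ii}=-z^{2r_i}$. This fails too when $r_j=1$ and $|C_{ij}|=2$, hence $r_i=2$; there $z^{r_j}[C_{ij}]_z=z^2+1$ again.

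The genuine fix is to combine both normalisations. Let $\Delta=\mathrm{diag}(z^{r_i})$ and consider $M(z)=\Delta^{1/2}C(z)\Delta^{1/2}$ with half-integer powers. Its off-diagonal lowest exponent is $\frac{r_i+r_j}{2}-|C_{ij}|+1$. For $(r_i,r_j,|C_{ij}|)=(1,2,2)$ this equals $1/2>0$, and for $(1,3,3)$ it equals $0$. The $G_2$ case still gives $0$, so the same problem persists there.

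\textbf{Main obstacle.} The hardest part is this $G_2$ and $B/C$ bookkeeping: showing that $\mathrm{Id}-\Delta^{1/2}C\Delta^{1/2}$, after the constant terms are absorbed, remains nilpotent modulo $z^{>0}$. The plan is to show that the constant-term part $N_0$ is strictly triangular for a suitable ordering, namely long before short roots, since only entries with $r_i\ne r_j$ contribute. Then $(\mathrm{Id}-N_0-zN_1)^{-1}$ lies in $\ZZ[z^{1/2}][[z^{1/2}]]$ with unipotent constant term.

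Integrality follows because every quantity involved is an integer polynomial. Claim 1 follows from $\tilde{C}=\Delta^{1/2}(\mathrm{Id}-N)^{-1}\Delta^{1/2}$, whose $(i,j)$ entry starts at $z^{(r_i+r_j)/2}\ge z^{r_i}$ when $r_j\ge r_i$. The remaining cases need the triangular structure to shift the exponent. Claim 2 is read off as the diagonal constant term $1$ of the unipotent part. Alternatively, for the delicate non-simply-laced cases, one could simply invoke the explicit formulas for $\tilde{C}(z)$ and check them directly.
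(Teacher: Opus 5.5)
The paper gives no proof of this statement; it quotes it from Fujita--Oh. Your proposal therefore has to stand on its own, and as written it has a genuine gap.

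\textbf{The gap.} The argument you settle on, $\tilde C=\Delta^{1/2}M^{-1}\Delta^{1/2}$, only yields $\tilde C_{ij}(z)\in z^{(r_i+r_j)/2}\ZZ[[z^{1/2}]]$.
\begin{itemize}
\item When $r_j<r_i$ (row index long, column index short), this is strictly weaker than (1).
\item In the same case, the off-diagonal part of (2) also needs the coefficient of $z^{(r_i-r_j)/2}$ in $(M^{-1})_{ij}$ to vanish.
\end{itemize}
Unipotence of the constant term of $M$ says nothing about either point. Your sentence ``the remaining cases need the triangular structure to shift the exponent'' is exactly the non-simply-laced content of the statement, and it is left unproved. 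To close it along your route you would need something like a path-weight argument: every term of $(M^{-1})_{ij}$ with $i$ long and $j$ short passes through an entry $M_{ab}$ with $a$ long, $b$ short, of order $(r_a+r_b)/2\ge 3/2$. Falling back on explicit formulas for $\tilde C(z)$ is not a proof.

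\textbf{The normalisation you discarded works.} The column normalisation you reject in Step 3 proves everything directly. Your bad case ``$r_j=1$, $|C_{ij}|=2$, $r_i=2$'' cannot occur. Indeed $r_iC_{ij}=r_jC_{ji}$ with $C_{ij}=-2$ forces $C_{ji}=-1$, $r_i=1$, $r_j=2$ (Remark~\ref{remark sugli r_i}, which you used correctly in Step 1). In general $|C_{ij}|\ge 2$ implies $r_j=|C_{ij}|$. So for $i\neq j$ the lowest exponent of $z^{r_j}[C_{ij}]_z$ is $r_j+1-|C_{ij}|\ge 1$. The diagonal entries of $C(z)\Delta$, with $\Delta=\mathrm{diag}(z^{r_i})$, are $1+z^{2r_i}$. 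Hence
\[
C(z)\Delta=\mathrm{Id}-N(z),\qquad N(z)\in\mathrm{Mat}_I(z\ZZ[z]),\qquad \tilde C(z)=\Delta\,\bigl(\mathrm{Id}-N(z)\bigr)^{-1}=\Delta\sum_{n\ge 0}N(z)^n .
\]
Note that $\Delta$ sits on the left, not on the right as you wrote; on the right it would produce $z^{r_j}$ instead of $z^{r_i}$. Therefore $\tilde C_{ij}(z)\in z^{r_i}\bigl(\delta_{ij}+z\,\ZZ[[z]]\bigr)$. This gives integrality, (1) and (2) simultaneously and uniformly, $G_2$ included.

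\textbf{Smaller slips.}
\begin{itemize}
\item In Step 1, the exponent for $|C_{ij}|=3$ is $-1$, not $\ge 0$.
\item In Step 2, it is $\mathrm{diag}([r_i]_z)\,C(z)$ that is symmetric, not $C(z)\,\mathrm{diag}([r_j]_z)$.
\end{itemize}
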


The next result will be fundamental for the rest of the paper.
\begin{lemma}\label{relation quantum cartan}
    If ($i=j$ and $u\neq 0$) or ($i\neq j$ and $u\in\ZZ$) we have
            \begin{gather*}
                \tilde{c}_{ij}(u-r_i)+\tilde{c}_{ij}(u+r_i)-\sum_{\{k\in I\mid C_{ik=-1}\}}\tilde{c}_{kj}(u)-\sum_{\{k\in I\mid C_{ik=-2}\}}\Big(\tilde{c}_{kj}(u+1)+\tilde{c}_{kj}(u-1)\Big)-\\-\sum_{\{k\in I\mid C_{ik=-3}\}}\Big(\tilde{c}_{kj}(u-2)+\tilde{c}_{kj}(u)+\tilde{c}_{kj}(u+2)\Big)=0.
            \end{gather*}
\end{lemma}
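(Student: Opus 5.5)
The plan is to read the identity off the matrix equation $C(z)\tilde{C}(z)=\mathrm{Id}$, compared coefficientwise in $\QQ((z))$. Concretely, I would take the $(i,j)$ entry of this product:
\[\sum_{k\in I} C_{ik}(z)\tilde{C}_{kj}(z)=\delta_{ij},\]
and expand each quantum Cartan entry $C_{ik}(z)$ explicitly as a Laurent polynomial. I would then extract the coefficient of $z^u$ on both sides.

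For the expansion I use Definition \ref{def quantum cartan}:
\begin{itemize}
\item The diagonal entry is $C_{ii}(z)=z^{r_i}+z^{-r_i}$.
\item For $k\neq i$ with $C_{ik}=0$ we get $[0]_z=0$, so these $k$ contribute nothing.
\item For $C_{ik}=-1$ we get $[-1]_z=-1$.
\item For $C_{ik}=-2$ we get $[-2]_z=-(z+z^{-1})$.
\item For $C_{ik}=-3$ we get $[-3]_z=-(z^2+1+z^{-2})$.
\end{itemize}
So the only genuinely non-routine input is the standard evaluation $[-k]_z=-[k]_z$ together with $[2]_z=z+z^{-1}$ and $[3]_z=z^2+1+z^{-2}$.

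Next I multiply by $\tilde{C}_{kj}(z)=\sum_v\tilde{c}_{kj}(v)z^v$. The product is a finite Laurent polynomial times a Laurent series, so coefficient extraction is legitimate in $\QQ((z))$. The coefficient of $z^u$ on the left-hand side is:
\begin{itemize}
\item from $i=k$: $\tilde{c}_{ij}(u-r_i)+\tilde{c}_{ij}(u+r_i)$;
\item from each $k$ with $C_{ik}=-1$: $-\tilde{c}_{kj}(u)$;
\item from each $k$ with $C_{ik}=-2$: $-(\tilde{c}_{kj}(u-1)+\tilde{c}_{kj}(u+1))$;
\item from each $k$ with $C_{ik}=-3$: $-(\tilde{c}_{kj}(u-2)+\tilde{c}_{kj}(u)+\tilde{c}_{kj}(u+2))$.
\end{itemize}
Summed, this is exactly the expression in the statement.

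On the right-hand side the coefficient of $z^u$ in $\delta_{ij}$ is $\delta_{ij}\delta_{u,0}$. This vanishes precisely when $i\neq j$ (for any $u$) or when $i=j$ and $u\neq 0$, which gives the lemma.

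There is no real obstacle. The only points needing care are that the $q$-number convention yields the stated shifts for negative arguments (handled by $[-k]_z=-[k]_z$), and that Remark \ref{remark sugli r_i} is not even needed, since the shifts $\pm1,\pm2$ come directly from $[C_{ik}]_z$. As a sanity check, the exceptional case $i=j$, $u=0$ gives the value $1$, consistent with Proposition \ref{prop tecnica}.
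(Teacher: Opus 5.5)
Your proposal is correct and follows the paper's own proof: both take the $(i,j)$ entry of $C(z)\tilde{C}(z)=\mathrm{Id}$, expand $[C_{ik}]_z$ for $C_{ik}\in\{-1,-2,-3\}$, and compare coefficients of $z^u$, with the right-hand side $\delta_{ij}\delta_{u,0}$ vanishing exactly in the stated cases. Your explicit evaluations of $[-k]_z$ and your remark that coefficient extraction is legitimate in $\QQ((z))$ add a bit of care the paper leaves implicit.
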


\begin{proof}
  These identities follow from the fact that $C(z)$ and $\tilde{C}(z)$ are inverse of each other. Indeed, for every $i,j$ in $I$, they verify:
  \[\sum_{k=1}^nC_{ik}(z)\tilde{C}_{kj}(z)=\delta_{ij}.\]
  By Definition \ref{def quantum cartan}, we can rewrite this as
  \[(z^{r_i}+z^{-r_i})\tilde{C}_{ij}(z)+\sum_{\{k\in I\mid k\neq i\}}[C_{ik}]_z\tilde{C}_{kj}(z)=\delta_{ij}.\]
  We use the Laurent expansion of the coefficient of $\tilde{C}(z)$ to get:
  \begin{gather*}
      \sum_{u\in\ZZ}\Big(z^{r_i}\tilde{c}_{ij}(u)+z^{-r_i}\tilde{c}_{ij}(u)-\sum_{\{k\in I\mid C_{ik}=-1\}}\tilde{c}_{kj}(u)-\sum_{\{k\in I\mid C_{ik}=-2\}}(z+z^{-1})\tilde{c}_{kj}(u)-\\-\sum_{\{k\in I\mid C_{ik}=-3\}}(z^2+1+z^{-2})\tilde{c}_{kj}(u)\Big)z^u=\delta_{ij}.
  \end{gather*}
  This is equivalent to 
  \begin{gather*}
       \sum_{u\in\ZZ}\Big(\tilde{c}_{ij}(u-r_i)+\tilde{c}_{ij}(u+r_i)-\sum_{\{k\in I\mid C_{ik=-1}\}}\tilde{c}_{kj}(u)-\sum_{\{k\in I\mid C_{ik=-2}\}}\big(\tilde{c}_{kj}(u+1)+\tilde{c}_{kj}(u-1)\big)-\\\sum_{\{k\in I\mid C_{ik=-3}\}}\big(\tilde{c}_{kj}(u-2)+\tilde{c}_{kj}(u)+\tilde{c}_{kj}(u+2)\big)\Big)z^u=\delta_{ij}.
  \end{gather*}
  Comparing the two sides of the equation, we conclude the proof.
\end{proof}
Note that in the simply-laced types, relations in Lemma \ref{relation quantum cartan} appeared in \cite{b21}. The next Proposition gives some other useful properties about the inverse quantum Cartan matrix.
\begin{proposition}[{\cite[Lemma 3.4]{fo_qdata}}]\label{prop tecnica 2}
    For $i,j\in I$ and $u\in\ZZ$, we have
    \begin{enumerate}
         \item $\tilde{c}_{ij}(u+r_i)-\tilde{c}_{ij}(u-r_i)=\tilde{c}_{ji}(u+r_j)-\tilde{c}_{ji}(u-r_j),$\label{3}
        \item 
        \[\tilde{c}_{ij}(u)=\begin{cases}
            \tilde{c}_{ji}(u)\quad &\textit{if}\ r_i=r_j,\\
            \tilde{c}_{ji}(u+1)+\tilde{c}_{ji}(u-1)\quad &\textit{if}\ (r_i,r_j)=(1,2),\\
            \tilde{c}_{ji}(u-2)+\tilde{c}_{ji}(u)+\tilde{c}_{ji}(u+2)\quad &\textit{if}\ (r_i,r_j)=(1,3).\end{cases}\]\label{4}
    \end{enumerate}
\end{proposition}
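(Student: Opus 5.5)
The plan is to derive both identities from a single matrix identity: a symmetrization of the quantum Cartan matrix. Set $D(z)=\mathrm{diag}([r_i]_z\mid i\in I)$. I claim that $D(z)C(z)$ is a symmetric matrix, and I will check this entrywise.

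\emph{Diagonal entries.} These cause no issue.

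\emph{Off-diagonal entries.} We must show $[r_i]_z[C_{ij}]_z=[r_j]_z[C_{ji}]_z$ for $i\neq j$.
\begin{itemize}
\item If $C_{ij}=C_{ji}=0$, both sides vanish.
\item If $r_i=r_j$, then $C_{ij}=C_{ji}$ because $DC$ is symmetric, so the two sides agree.
\item Otherwise, by Remark \ref{remark sugli r_i} and the relation $C_{ij}r_i=C_{ji}r_j$, one index, say $i$, has $r_i=1$. Then $C_{ji}=-1$ and $C_{ij}=-r_j$. Both sides equal $-[r_j]_z$, using $[1]_z=1$ and $[-k]_z=-[k]_z$.
\end{itemize}

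Inverting, $(D(z)C(z))^{-1}=\tilde C(z)D(z)^{-1}$ is symmetric too. This gives the identity of rational functions
\[
[r_i]_z\,\tilde C_{ij}(z)=[r_j]_z\,\tilde C_{ji}(z)\qquad (i,j\in I). \qquad (\ast)
\]
Since $[r]_z$ is a Laurent polynomial for $r\geq 1$, both sides of $(\ast)$ can be compared as elements of $\QQ((z))$, by multiplying the Laurent expansions at $z=0$.

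For part (1), multiply $(\ast)$ by $z-z^{-1}$ to obtain
\[
(z^{r_i}-z^{-r_i})\tilde C_{ij}(z)=(z^{r_j}-z^{-r_j})\tilde C_{ji}(z).
\]
Extracting the coefficient of $z^u$ gives
\[
\tilde c_{ij}(u-r_i)-\tilde c_{ij}(u+r_i)=\tilde c_{ji}(u-r_j)-\tilde c_{ji}(u+r_j).
\]
This is (1) up to an overall sign.

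For part (2), use $(\ast)$ directly.
\begin{itemize}
\item If $r_i=r_j$, the quantum integers cancel, so $\tilde C_{ij}=\tilde C_{ji}$.
\item If $(r_i,r_j)=(1,2)$, then $(\ast)$ reads $\tilde C_{ij}(z)=(z+z^{-1})\tilde C_{ji}(z)$. Comparing coefficients of $z^u$ gives $\tilde c_{ji}(u-1)+\tilde c_{ji}(u+1)$.
\item If $(r_i,r_j)=(1,3)$, then $[3]_z=z^2+1+z^{-2}$ gives $\tilde c_{ji}(u-2)+\tilde c_{ji}(u)+\tilde c_{ji}(u+2)$.
\end{itemize}
Since the $r_i$ take at most two values, with the smaller equal to $1$, these cases exhaust all pairs, up to swapping $i$ and $j$.

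The only genuine step is verifying that $D(z)C(z)$, rather than $C(z)D(z)$, is symmetric with the paper's convention $C_{ij}(z)=[C_{ij}]_z$. Once $(\ast)$ holds, both parts are a direct comparison of coefficients. I would double-check this step on $B_2$ and $G_2$ explicitly.
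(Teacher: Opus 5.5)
Your argument is correct and complete. The entrywise check that $D(z)C(z)$ is symmetric is right for the paper's convention ($DC$ symmetric, $C_{ij}(z)=[C_{ij}]_z$), and the identity $[r_i]_z\tilde C_{ij}(z)=[r_j]_z\tilde C_{ji}(z)$ gives both parts by coefficient extraction exactly as you describe. The paper does not prove this statement itself; it only cites Fujita--Oh (Lemma 3.4). The natural proof of that lemma is the same symmetry of $D(z)C(z)$ and of its inverse $\tilde C(z)D(z)^{-1}$, so your proposal serves as a self-contained proof of the cited result.
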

Throughout the paper we fix a non-zero complex number $q$, which is not a root of unity, and we denote $q_i:=q^{r_i}$.

\subsection*{Bicharacters}
We will use the following notation:
\[V:=I\times \ZZ.\]
Let 
\[\mathcal{Y}:=\ZZ[Y_{i,q^r}^{\pm 1}\mid (i,r)\in V]\] be the ring of Laurent polynomials in the variables $Y_{i,q^r}$. Inside $\mathcal{Y}$ we can consider the multiplicative group of Laurent monomials $\mathcal{M}$. We introduce now another ring of Laurent polynomials
\[\mathcal{Y}':=\ZZ[\Psi_{i,q^r}^{\pm 1}\mid (i,q^r)\in V]\] and we denote by $\mathcal{M}'$ the multiplicative group of Laurent monomials inside $\mathcal{Y}'$. By the following relation, we can consider $\mathcal{Y}$ as a subring of $\mathcal{Y}'$ and $\mathcal{M}$ as a subgroup of $\mathcal{M}'$:
\begin{equation}\label{rel Y e psi}
    Y_{i,q^{r}}:=\Psi_{i,q_i^{-1}q^{r}}\Psi_{i,q_iq^{r}}^{-1}=\Psi_{i,q^{r-r_i}}\Psi_{i,q^{r+r_i}}^{-1}.
\end{equation}
Note that $\mathcal{Y}'$ and $\mathcal{Y}$ ( resp. $\mathcal{M}'$ and $\mathcal{M}$) are commutative rings (resp. groups). 

The reason to deal with these two rings and the relation between them comes from the representation theory of quantum affine algebras, as mentioned in the \hyperlink{target:intro}{Introduction}.

Now we give two crucial definitions for our work.
\begin{definition}\label{def N}
   Let $$\Omega:\mathcal{M}\times\mathcal{M}\to \ZZ$$ be the bicharacter defined on the group generators by
    \begin{equation}\label{eq def N}
        \Omega(Y_{i,q^r},Y_{j,q^s})=\tilde{c}_{ij}(m+r_i)-\tilde{c}_{ij}(m-r_i)+\tilde{c}_{ij}(-m-r_i)-\tilde{c}_{ij}(-m+r_i),
    \end{equation}
    where $m=s-r$.
\end{definition}
The definition of $\Omega$ was first given in \cite{halgebraicqt}, using the notation $\mathcal{N}$. 
\begin{remark}\label{N antisimmetrica}
    By Proposition \ref{prop tecnica} (\ref{3}), $\Omega$ verifies
    \[\Omega(Y_{i,q^r},Y_{j,q^s})=-\Omega(Y_{j,q^s},Y_{i,q^r}),\quad\forall(i,r),(j,s)\in V.\] Thus, $\Omega$ is skew-symmetric.
\end{remark}
\begin{remark}
    By Proposition \ref{prop tecnica}, $\tilde{c}_{ij}(k)=0$ if $k<r_i$, so if $m\geq 0$, the Equation \eqref{eq def N} reduces to
    \begin{equation}
       \Omega(Y_{i,q^r},Y_{j,q^s}) =\tilde{c}_{ij}(m+r_i)-\tilde{c}_{ij}(m-r_i).
    \end{equation}
\end{remark}
Now we extend the definition of $\Omega$ to the direct product of groups $\mathcal{M}'$. Note that this has be done in \cite{b21} for simply-laced finite Dynkin types. We propose a definition for all finite Dynkin types. 
\begin{definition}\label{def F}
    Let $\Omega':\mathcal{M}'\times\mathcal{M}'\to\ZZ$ be the bicharacter defined on the group generators by
    
    \begin{equation}\label{def F general}
       \Omega'(\Psi_{i,q^r},\Psi_{j,q^s}):=-\sum_{\ell \geq 1}\tilde{c}_{ij}(m+r_j(-2\ell+1))+\sum_{\ell\geq 1}\tilde{c}_{ij}(-m+r_j(-2\ell+1)),
    \end{equation}
    where $m=s-r$.
\end{definition}
\begin{remark}\label{F recovers bittmann}
    \begin{enumerate}[label=\roman*)]
        \item By Proposition \ref{prop tecnica} we have that the summation in \eqref{def F general} is finite and hence well defined. In particular, for $m = s - r\geq 0$, it reduces to
        \[\Omega'(\Psi_{i,q^r},\Psi_{j,q^s}):=-\sum_{\ell\geq 1}\tilde{c}_{ij}(m+r_j(-2\ell+1)).\]
        \item Note that in the simply-laced case we have $r_j=1$, so we recover the definition given in \cite{b21} (in that paper, the bicharacter is denoted by $\mathcal{F}$).\\
        \item Eventually, $\Omega'(\Psi_{i,q^r},\Psi_{j,q^s})$ depends only on the difference $m = s-r$ of the parameters, and on $i,j$.
    \end{enumerate}
\end{remark}
\begin{proposition}\label{F antisimmetrica}
    The bicharacter $\Omega'$ is skew-symmetric.
\end{proposition}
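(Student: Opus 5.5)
The plan is to reduce skew-symmetry of $\Omega'$ to a symmetry property of a single auxiliary function, and then prove that property with generating functions rather than by manipulating the coefficients $\tilde{c}_{ij}(u)$ one by one.

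\textbf{Step 1: reduction.} For $i,j\in I$ and $u\in\ZZ$ I set
\[F_{ij}(u):=\sum_{\ell\geq 1}\tilde{c}_{ij}\big(u-r_j(2\ell-1)\big).\]
This is a finite sum by Proposition \ref{prop tecnica}. By Definition \ref{def F}, with $m=s-r$,
\[\Omega'(\Psi_{i,q^r},\Psi_{j,q^s})=-F_{ij}(m)+F_{ij}(-m).\]
Swapping the two arguments replaces $m$ by $-m$ and $(i,j)$ by $(j,i)$, so
\[\Omega'(\Psi_{j,q^s},\Psi_{i,q^r})=-F_{ji}(-m)+F_{ji}(m).\]
Hence skew-symmetry on generators, which suffices since both sides are bicharacters, follows once I prove $F_{ij}(u)=F_{ji}(u)$ for all $i,j\in I$ and all $u\in\ZZ$.

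\textbf{Step 2: generating function for $F_{ij}$.} Form $\sum_u F_{ij}(u)z^u\in\QQ((z))$. Since $F_{ij}$ is a convolution of $\tilde{c}_{ij}$ with the indicator of $\{r_j(2\ell-1)\mid\ell\geq1\}$, this series equals
\[\tilde{C}_{ij}(z)\sum_{\ell\geq1}z^{r_j(2\ell-1)}=\tilde{C}_{ij}(z)\,\frac{z^{r_j}}{1-z^{2r_j}}=\frac{\tilde{C}_{ij}(z)}{z^{-r_j}-z^{r_j}}.\]
This manipulation is legitimate because all these series are Laurent series at $z=0$ with exponents bounded below (Proposition \ref{prop tecnica}(\ref{1})), and $\frac{1}{1-z^{2r_j}}$ is expanded in nonnegative powers of $z$. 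So, writing $D(z):=\mathrm{diag}(z^{-r_j}-z^{r_j})_{j\in I}$, it suffices to show that the matrix $\tilde{C}(z)D(z)^{-1}$ is symmetric.

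\textbf{Step 3: symmetry of $D(z)C(z)$.} Since $\tilde{C}(z)D(z)^{-1}=(D(z)C(z))^{-1}$, it is enough to show that $D(z)C(z)$ is symmetric. This is the only real check, a case-by-case verification in the spirit of Remark \ref{remark sugli r_i}.
\begin{itemize}
\item Diagonal entries: there is nothing to prove.
\item $C_{ij}=C_{ji}=-1$: then $r_i=r_j$, and both off-diagonal entries equal $-(z^{-r_i}-z^{r_i})$.
\item $C_{ij}=-2$: then $C_{ji}=-1$, $r_i=1$ and $r_j=2$. We get
\[(z^{-1}-z)\big(-(z+z^{-1})\big)=z^2-z^{-2}=(z^{-2}-z^{2})(-1).\]
\item $C_{ij}=-3$: then $C_{ji}=-1$, $r_i=1$ and $r_j=3$. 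We get
\[(z^{-1}-z)\big(-(z^2+1+z^{-2})\big)=z^3-z^{-3}=(z^{-3}-z^3)(-1).\]
\end{itemize}
The zero entries are symmetric trivially.

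\textbf{Conclusion.} Inverting the symmetric matrix $D(z)C(z)$ gives a symmetric matrix $\tilde{C}(z)D(z)^{-1}$. Comparing coefficients of $z^u$ then gives $F_{ij}(u)=F_{ji}(u)$, and Step 1 yields the proposition.

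The main point of care is Step 2: one must make sure all expansions are taken in $\QQ((z))$ at $z=0$, consistent with the definition of the $\tilde{c}_{ij}(u)$. Alternatively, one could try to derive $F_{ij}=F_{ji}$ directly from Proposition \ref{prop tecnica 2} by telescoping, but the steps $r_i$ and $r_j$ do not match in the non-simply-laced case, so the generating-function route seems cleaner.
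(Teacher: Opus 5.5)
Your proof is correct, and it reaches the key identity $F_{ij}=F_{ji}$ by a different route from the paper.

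\textbf{The paper's route.} The paper makes the same reduction (implicitly) and then quotes Proposition~\ref{prop tecnica 2}(\ref{4}) from Fujita--Oh, arguing case by case. For $(r_i,r_j)=(1,2)$ it rewrites each $\tilde{c}_{ij}(m-4\ell+2)$ as $\tilde{c}_{ji}(m-4\ell+3)+\tilde{c}_{ji}(m-4\ell+1)$. The two interleaved step-$4$ progressions then merge into the single step-$2$ progression $\sum_{\ell\geq1}\tilde{c}_{ji}(m-2\ell+1)$. The case $(1,3)$ works the same way with three interleaved progressions, and the case $r_i=r_j$ is immediate.

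So the worry in your last paragraph is unfounded. The mismatched steps $2r_j$ and $2r_i$ are reconciled by re-indexing, not telescoping. This works because the conversion factor in Proposition~\ref{prop tecnica 2}(\ref{4}) is exactly $[r_j]_z/[r_i]_z$.

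\textbf{Your route.} You prove everything directly from the definition of $C(z)$. Since $z^{-r}-z^{r}=-(z-z^{-1})[r]_z$, your $D(z)$ is a scalar multiple of $\mathrm{diag}([r_i]_z)$. So the symmetry of $D(z)C(z)$ is the symmetry of the symmetrized quantum Cartan matrix $\mathrm{diag}([r_i]_z)C(z)$, which is precisely what underlies Proposition~\ref{prop tecnica 2}(\ref{4}).

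\textbf{Comparison.} Your version buys three things:
\begin{enumerate}
\item It is self-contained, with no appeal to the cited lemma.
\item It treats all pairs $(i,j)$ and all types uniformly, including the orientation $(r_i,r_j)=(2,1)$ and the $G_2$ case, which the paper leaves implicit or calls ``similar''.
\item It explains conceptually why the step $2r_j$ in Definition~\ref{def F} is the right one: $\sum_{\ell\geq1}z^{r_j(2\ell-1)}$ is exactly the inverse of the $j$-th symmetrizing factor.
\end{enumerate}
The paper's argument is shorter, given the citation.
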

\begin{proof}
    The proof is based on Proposition \ref{prop tecnica}(\ref{4}). Indeed, when $r_i=r_j$, we can conclude immediately. When $r_i=1,r_j=2$, denoting as above $m=s-r$, we have 
    \begin{align*}
       \Omega'(\Psi_{i,q^r},\Psi_{j,q^s})&=-\sum_{\ell\geq 1}\tilde{c}_{ij}(m+2(-2\ell+1))+\sum_{\ell\geq 1}\tilde{c}_{ij}(-m+2(-2\ell+1))=\\
        &=-\sum_{\ell\geq 1}\tilde{c}_{ij}(m-4\ell+2)+\sum_{\ell\geq 1}\tilde{c}_{ij}(-m-4\ell+2).
    \end{align*} Using the relation from Proposition \ref{prop tecnica 2}(\ref{4}) with $u=\pm m-4\ell+2$, we get
    \begin{align*}
       \Omega'(\Psi_{i,q^r},\Psi_{j,q^s})&=-\sum_{\ell\geq 1}\tilde{c}_{ji}(m-4\ell+3)-\sum_{\ell\geq 1}\tilde{c}_{ji}(m-4\ell+1)+\sum_{\ell\geq 1}\tilde{c}_{ji}(-m-4\ell+3)+\sum_{\ell\geq 1}\tilde{c}_{ji}(-m-4\ell+1)=\\
        &=-\sum_{\ell\geq 1}\tilde{c}_{ji}(m-2\ell+1)+\sum_{\ell\geq 1}\tilde{c}_{ji}(-m-2\ell+1)=\\
        &=-\left(-\sum_{\ell\geq 1}\tilde{c}_{ji}(-m+(-2\ell+1))+\sum_{\ell\geq 1}\tilde{c}_{ji}(m+(-2\ell+1))\right)\\
        &=-\Omega'(\Psi_{j,q^s},\Psi_{i,q^r}).
    \end{align*}
    Finally, the case $r_i=1, r_j=3$ can be treated similarly.
\end{proof}

\begin{lemma}\label{restrizione Lambda}
    The restriction of $\Omega'$ to $\mathcal{M}\times\mathcal{M}$ coincides with $\Omega$. 
\end{lemma}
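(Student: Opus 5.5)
Since both $\Omega$ and $\Omega'$ are bicharacters, it suffices to check the identity on pairs of generators $Y_{i,q^r}, Y_{j,q^s}$ of $\mathcal{M}$. Using relation \eqref{rel Y e psi}, $Y_{i,q^r}=\Psi_{i,q^{r-r_i}}\Psi_{i,q^{r+r_i}}^{-1}$ and similarly for $Y_{j,q^s}$. Bilinearity then expands $\Omega'(Y_{i,q^r},Y_{j,q^s})$ into four terms:
\[\Omega'(\Psi_{i,q^{r-r_i}},\Psi_{j,q^{s-r_j}})-\Omega'(\Psi_{i,q^{r-r_i}},\Psi_{j,q^{s+r_j}})-\Omega'(\Psi_{i,q^{r+r_i}},\Psi_{j,q^{s-r_j}})+\Omega'(\Psi_{i,q^{r+r_i}},\Psi_{j,q^{s+r_j}}).\]
By Remark \ref{F recovers bittmann}(iii), each term depends only on the difference of parameters. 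With $m=s-r$, the four differences are $m-r_j+r_i$, $m+r_j+r_i$, $m-r_j-r_i$ and $m+r_j-r_i$.

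Plug these into Definition \ref{def F} and treat separately the sums with argument $+m$ and those with argument $-m$. The key observation is that a sum $\sum_{\ell\ge1}\tilde{c}_{ij}(x+r_j(-2\ell+1))$ telescopes when we take the difference of the versions with $x$ and $x+2r_j$:
\[\sum_{\ell\ge1}\tilde{c}_{ij}(x+2r_j-(2\ell-1)r_j)-\sum_{\ell\ge1}\tilde{c}_{ij}(x-(2\ell-1)r_j)=\tilde{c}_{ij}(x+r_j).\]
Only finitely many terms are nonzero, by Proposition \ref{prop tecnica}, so the rearrangement is legitimate.

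For the $+m$ part, pair the terms with shifts $-r_j$ and $+r_j$ at fixed $\pm r_i$. This gives
\[-\big(\tilde{c}_{ij}(m+r_i)-\tilde{c}_{ij}(m-r_i)\big)\]
up to an overall sign, to be tracked carefully. The $-m$ part telescopes in the same way to $\tilde{c}_{ij}(-m-r_i)-\tilde{c}_{ij}(-m+r_i)$, again up to sign. The shifts by $r_j$ collapse through the telescoping, and what survives are arguments $\pm m\pm r_i$ with no $r_j$ left. This is exactly the shape of Definition \ref{def N}, so the two should agree.

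The main obstacle is bookkeeping: getting the signs right and ensuring the telescoped arguments come out as $\pm m\pm r_i$ rather than $\pm m\pm r_j$. Definition \ref{def F} is written with steps of $r_j$, while Definition \ref{def N} uses $r_i$, so in the non-simply-laced case the step sizes do not match a priori. Pairing the terms with the same $r_i$-shift, as above, makes the $r_j$-steps telescope and leaves the $r_i$-shifts intact, so no mismatch should arise.

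If the signs came out reversed, or the shifts landed on $r_j$, I would fall back on two tools. The first is Proposition \ref{prop tecnica 2}(\ref{3}), which exchanges differences in $r_i$ for differences in $r_j$ with indices swapped. The second is the skew-symmetry established in Proposition \ref{F antisimmetrica} and Remark \ref{N antisimmetrica}, which can be used to realign the formula.
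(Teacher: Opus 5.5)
Your proposal is correct and follows the paper's own argument: expand $\Omega'(Y_{i,q^r},Y_{j,q^s})$ by bilinearity into the four $\Psi$-pairings, then telescope the $r_j$-steps of Definition \ref{def F} so that only the arguments $\pm m\pm r_i$ of Definition \ref{def N} remain. Writing $F(x):=\sum_{\ell\ge1}\tilde{c}_{ij}(x-(2\ell-1)r_j)$ and carrying out your pairing, the $+m$ block is $[F(m+r_i+r_j)-F(m+r_i-r_j)]-[F(m-r_i+r_j)-F(m-r_i-r_j)]=\tilde{c}_{ij}(m+r_i)-\tilde{c}_{ij}(m-r_i)$, with a plus sign rather than the minus you wrote, while the $-m$ block gives $\tilde{c}_{ij}(-m-r_i)-\tilde{c}_{ij}(-m+r_i)$ as you said, so the fallback to Proposition \ref{prop tecnica 2} or to skew-symmetry is never needed.
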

\begin{proof}
    We have to show that for every $(i,r),(j,s)$ in $V$, we have $\Omega'(Y_{i,q^r},Y_{j,q^s})=\Omega(Y_{i,q^r},Y_{j,q^s})$. Using the relation in Equation \eqref{rel Y e psi}, we have to prove 
    \[\Omega'(\Psi_{i,q^{r-r_i}},\Psi_{i,q^{r+r_i}}^{-1},\Psi_{j,q^{s-r_j}}\Psi_{j,q^{s+r_j}}^{-1})=\Omega(Y_{i,q^r},Y_{j,q^s}).\] Since $\Omega'$ is a bicharacter, this is equivalent to prove:
    \begin{equation}\label{aaa}
        \Omega'(\Psi_{i,q^{r-r_i}},\Psi_{j,q^{s-r_j}})-\Omega'\left(\Psi_{i, q^{r - r_i}}, \Psi_{j, q^{s + r_j}}\right) -\Omega'\left(\Psi_{i, q^{r + r_i}}, \Psi_{j, q^{s - r_j}}\right) +\Omega'\left(\Psi_{i, q^{r + r_i}}, \Psi_{j, q^{s + r_j}}\right)=\Omega(Y_{i,q^r},Y_{j,q^s})
    \end{equation}
    Developing the left hand side of \eqref{aaa}, setting $m=s-r$, we get:
    \begin{align*}
        \mathrm{LHS}=&-\sum_{\ell\geq 1}\tilde{c}_{ij}(m-r_j+r_i+r_j(-2\ell+1))+\sum_{\ell\geq1}\tilde{c}_{ij}(-m+r_j-r_i+r_j(-2\ell+1))+\\
        &+\sum_{\ell\geq 1}\tilde{c}_{ij}(m+r_j+r_i+r_j(-2\ell+1))-\sum_{\ell\geq 1}\tilde{c}_{ij}(-m-r_j-r_i+r_j(-2\ell+1))+\\
        &+\sum_{\ell\geq 1}\tilde{c}_{ij}(m-r_j-r_i+r_j(-2\ell+1))-\sum_{\ell\geq 1}\tilde{c}_{ij}(-m+r_j+r_i+r_j(-2\ell+1))-\\
        &-\sum_{\ell\geq 1}\tilde{c}_{ij}(m+r_j-r_i+r_j(-2\ell+1))+\sum_{\ell\geq 1}\tilde{c}_{ij}(-m-r_j+r_i+r_j(-2\ell+1)).
    \end{align*}
    This sum is telescopic and the remaining terms are
    \[\tilde{c}_{ij}(m+r_i)-\tilde{c}_{ij}(m-r_i)+\tilde{c}_{ij}(-m-r_i)-\tilde{c}_{ij}(-m+r_i),\]and this is indeed equal to $\Omega(Y_{i,q^r},Y_{j,q^s})$.
    
\end{proof}
\begin{remark}
    If we consider the groups $\mathcal{M}$ and $\mathcal{M}'$, ignoring the relation \eqref{rel Y e psi}, we have that via $Y_{i,q^r}\mapsto\Psi_{i,q^r}$ the two groups are isomorphic. However, the bicharacters $\Omega$ and $\Omega'$ are not identified through this isomorphism. For instance, in type $A_2$ we have $\Omega(Y_{1,q^0},Y_{2,q^5})=1$, while $\Omega'(\Psi_{1,q^0},\Psi_{2,q^5})=0$.
\end{remark}

\section{Braid group invariance of the bicharacter}\label{sec braid}

We recall Chari's braid group action on the group $\mathcal{M}$ and Frenkel--Hernandez's extension of this action to $\mathcal{M}'$. Then we prove our first main Theorem: the bicharacter $\Omega'$ is invariant under the braid group action (Theorem \ref{main thm}). Finally we give an example to illustrate the Theorem.

Let $W$ be the Weyl group of $\mathfrak{g}$ and $\mathcal{B}$ the corresponding braid group. $\mathcal{B}$ is generated by elements $(t_i)_{i\in I}$ and relations 
\begin{align*}
    t_it_j&=t_jt_i\quad \textit{if}\ C_{ij}=0,\\
    t_it_jt_i&=t_jt_it_j\quad \textit{if}\ C_{ij}C_{ji}=1,\\
    (t_it_j)^2&=(t_jt_i)^2\quad \textit{if}\ C_{ij}C_{ji}=2,\\
    (t_it_j)^3&=(t_jt_i)^3\quad \textit{if}\ C_{ij}C_{ji}=3.
\end{align*}
Thus, the braid group $\mathcal{B}$ has infinite order. 

$\mathcal{B}$ acts on $\mathcal{\mathcal{M}}$ and $\mathcal{M}'$. The action on $\mathcal{M}$ was defined in \cite{chari}\footnote{We use here a slight modification of Chari's action, where $q$ is replaced by its inverse $q^{-1}$.} and works as follows:
\begin{definition}[\cite{chari}]\label{def braid Y}
    For every $i\in I$, we define an automorphism $T_i$ of $\mathcal{M}$ defined by
    \begin{align*}\label{eq def braid on Y}
        T_i(Y_{j,q^r})&:=Y_{j,q^r},\quad \textit{if}\ i\neq j,\\
        T_i(Y_{i,q^r})&:=Y_{i,q^{r-2r_i}}^{-1}\prod_{\{k\in I\mid C_{ki}=-1\}}Y_{k,q^{r-r_i}}\prod_{\{k\in I\mid C_{ki}=-2\}}Y_{k,q^{r-r_i-1}}Y_{k,q^{r-r_i+1}}\prod_{\{k\in I\mid C_{ki}=-3\}}Y_{k,q^{r-r_i-2}}Y_{k,q^{r-r_i}}Y_{k,q^{r-r_i+2}}.
        \end{align*}   
     
\end{definition}
It was established in \cite{chari} that the automorphisms $(T_i)_{i\in I}$ generate a braid group action on $\mathcal{M}$. For any $w\in W$, by considering a reduced expression, one has a corresponding braid group operator $T_w$ on $\mathcal{M}$. 

We have seen above that $\mathcal{M}$ is a subgroup of $\mathcal{M}'$. In \cite{fh23} the braid group action is extended to $\mathcal{M}'$ in the following way.

\begin{definition}[\cite{fh23}]\label{braid action}
    For every $i\in I$, $T_i'$ defines\footnote{Our definition differs from \cite[Eq. (3.9)]{fh23} replacing $q$ by $q^{-1}$ in the action. This yields an isomorphic braid group action. In addition, our definition is the same as in \cite[Remark 5.12]{ghl}, allowing us to treat $QQ$-systems in Section \ref{section QQ}.} an automorphism of $\mathcal{M}'$ by:
   \begin{align*}
       T_i'(\Psi_{j,q^r})&:=\Psi_{j,q^r}\quad\textit{if}\ i\neq j,\\
       T_i'(\Psi_{i,q^r})&:=\Psi_{i,q^{r-2r_i}}^{-1}\prod_{\{k\in I\mid C_{ik}=-1\}}\Psi_{k,q^{r-r_i}}\prod_{\{k\in I\mid C_{ik}=-2 \}}\Psi_{k,q^r}\Psi_{k,q^{r-2}}\prod_{\{k\in I\mid C_{ik}=-3\}}\Psi_{k,q^{r-1}}\Psi_{k,q^{r+1}}\Psi_{k,q^{r-3}}.
    \end{align*}
\end{definition}

By linearity, for every $i\in I$ we can extend the automorphisms $T_i'$ (resp. $T_i$) to $\mathcal{Y}'$ (resp. $\mathcal{Y}$). We have that: 
\begin{theorem}\label{thm 3.4 fh}\cite[Theorem 3.4]{fh23}
    The automorphisms $(T_i')_{i\in I}$ satisfy the braid relations, hence they define a braid group action on $\mathcal{Y}'$. Moreover, the restriction of $T_i'$ to $\mathcal{Y}$ coincides with Chari operator $T_i$.
\end{theorem}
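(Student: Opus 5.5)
The statement has two parts: that the restriction of $T_i'$ to $\mathcal{Y}$ is Chari's $T_i$, and that the $T_i'$ satisfy the braid relations. I would do the restriction first, since it is a direct telescoping check, and then reduce the braid relations to finitely many rank-$2$ computations.

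\emph{Restriction.} It suffices to check $T_i'(Y_{j,q^r})=T_i(Y_{j,q^r})$ on generators, using \eqref{rel Y e psi}. For $j\neq i$, $Y_{j,q^r}$ involves only $\Psi_{j,\cdot}$, which $T_i'$ fixes, as does $T_i$. For $j=i$, I write
\[
T_i'(Y_{i,q^r})=T_i'(\Psi_{i,q^{r-r_i}})\,T_i'(\Psi_{i,q^{r+r_i}})^{-1}
\]
and collect factors by $k$.
\begin{itemize}
\item The $\Psi_i$-part is $\Psi_{i,q^{r-3r_i}}^{-1}\Psi_{i,q^{r-r_i}}=Y_{i,q^{r-2r_i}}^{-1}$.
\item For a neighbour $k$ the product telescopes, and the case distinction comes from Remark \ref{remark sugli r_i}. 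Note that $T_i'$ is indexed by $C_{ik}$ while $T_i$ is indexed by $C_{ki}$.
\item If $C_{ik}=C_{ki}=-1$, then $r_k=r_i$ and we get $\Psi_{k,q^{r-2r_i}}\Psi_{k,q^{r}}^{-1}=Y_{k,q^{r-r_i}}$.
\item If $C_{ik}=-1$ and $C_{ki}=-2$, then $r_i=2$, $r_k=1$, and $\Psi_{k,q^{r-4}}\Psi_{k,q^r}^{-1}=Y_{k,q^{r-3}}Y_{k,q^{r-1}}$, which is Chari's factor.
\item If $C_{ik}=-2$, then $r_i=1$, $r_k=2$, $C_{ki}=-1$. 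Here $\Psi_{k,q^{r-1}}\Psi_{k,q^{r-3}}\Psi_{k,q^{r+1}}^{-1}\Psi_{k,q^{r-1}}^{-1}=Y_{k,q^{r-1}}$, again as required.
\item The $C_{ik}=-3$ and $C_{ki}=-3$ cases are identical telescopings with three factors.
\end{itemize}

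\emph{Braid relations.} Each $T_i'$ is an automorphism of the free abelian group $\mathcal{M}'$; its inverse is given explicitly by solving for $\Psi_{i,q^{r-2r_i}}$. So it suffices to check each braid relation for a pair $i\neq j$ on the generators $\Psi_{l,q^r}$. For $l\notin\{i,j\}$ both sides fix $\Psi_{l,q^r}$. For $l\in\{i,j\}$, I would work additively: $T_i',T_j'$ are $\ZZ$-linear maps on $\bigoplus_{(l,r)\in V}\ZZ\,\Psi_{l,q^r}$. Let $U$ be the sublattice spanned by $\Psi_{l,\cdot}$ with $l\notin\{i,j\}$. Both operators are the identity on $U$ and preserve $U\oplus U_{ij}$, where $U_{ij}$ is spanned by $\Psi_{i,\cdot},\Psi_{j,\cdot}$. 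Write each operator in block form
\[
\begin{pmatrix}A&0\\ B&\mathrm{Id}\end{pmatrix},
\]
where $A$ is the rank-$2$ operator obtained from the subdiagram $\{i,j\}$ and $B$ records the outer neighbours. The braid relation then splits into two parts: the relation for the $A$-blocks, which is a rank-$2$ statement, and an identity for the $B$-blocks. For a word $w$, the $B$-block is $\sum B_{a}\,A_{\text{(earlier letters)}}$, and the required identity is that the two sides give the same sum.

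\emph{Rank-$2$ computations.} I would verify the $A$-block relation in types $A_1\times A_1$ (trivial, since the operators act on disjoint variables), $A_2$, $B_2$ and $G_2$ by direct substitution, using shift operators in $r$ to keep the bookkeeping compact. To organise the $B$-block identity, I would use that each $B_a$ only records, for each outer $k$, a fixed shift-polynomial applied to the $\Psi_a$-coefficient. This reduces the identity to comparing the total $\Psi_i$- and $\Psi_j$-contents, with shifts, accumulated along the two reduced words. Those totals can be read off from the $A$-computation, applied to a suitable functional. Alternatively, the identity should follow because it must match Chari's relation after passing to $Y$-variables, via the fact that $Y$-monomials detect shift-polynomials up to a kernel that one checks is trivial here.

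The main obstacle I expect is the $G_2$ rank-$2$ check, together with the outer-factor bookkeeping in the non-simply-laced cases. There $(T_i'T_j')^3$ involves long products with shifts by $1$ and $3$, so I would carry it out with generating functions in a shift variable rather than by hand.
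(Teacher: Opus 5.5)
The paper does not prove this statement; it quotes it from \cite[Theorem 3.4]{fh23}. So your proposal has to stand on its own, and it does so in outline, by a route the paper never takes.

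\textbf{Restriction.} This part is correct and complete. The $\Psi_i$-part gives $Y_{i,q^{r-2r_i}}^{-1}$. In each neighbour configuration $(C_{ik},C_{ki})\in\{(-1,-1),(-1,-2),(-2,-1),(-1,-3),(-3,-1)\}$ the telescoped $\Psi_k$-factor is exactly Chari's factor. For instance, $C_{ik}=-3$ forces $r_k=3$ and gives $\Psi_{k,q^{r-4}}\Psi_{k,q^{r+2}}^{-1}=Y_{k,q^{r-1}}$.

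\textbf{Braid relations.} Your block reduction is sound, but the substance is only announced. The rank-$2$ substitutions are not carried out, and the $B$-block step needs a precise form. It does work, as follows.
\begin{itemize}
\item Work additively over $\ZZ[x^{\pm1}]$, with $x$ the spectral shift.
\item Put $\gamma_a=(\mathrm{Id}-A_a)(\Psi_{a,q^0})$, and let $\phi_a$ be your accumulated $\Psi_a$-contents along a word $w$.
\item Telescoping gives $\mathrm{Id}-A_w=\gamma_i\phi_i+\gamma_j\phi_j$, as maps $v\mapsto\phi_i(v)\gamma_i+\phi_j(v)\gamma_j$.
\item The $B$-block of $w$ is $\beta_i\phi_i+\beta_j\phi_j$, where $\beta_a$ is the outer part of $T_a'(\Psi_{a,q^0})$.
\item The vectors $\gamma_i,\gamma_j$ are independent over $\QQ(x)$; their determinant is a nonzero rank-$2$ quantity, e.g.\ $1+x^{-2}+x^{-4}$ in type $A_2$. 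Hence $\phi_i,\phi_j$, and so the $B$-block, are determined by $A_w$.
\end{itemize}
Completed this way, your route is self-contained. It even reproves Chari's relations by restriction.

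\textbf{A shorter route.} You can skip all rank-$2$ work, including the $G_2$ check you flag as the main obstacle. Apply your closing ``alternatively'' remark to the whole operators rather than only to the $B$-blocks.
\begin{itemize}
\item Let $\tau$ be the shift $\Psi_{l,q^p}\mapsto\Psi_{l,q^{p+1}}$. Every $T_i'$ commutes with $\tau$, as is clear from Definition \ref{braid action}.
\item For a braid relation with words $w_1,w_2$, set $D(M)=T'_{w_1}(M)\,T'_{w_2}(M)^{-1}$. This is an endomorphism of the abelian group $\mathcal{M}'$ commuting with $\tau$.
\item $D$ is trivial on $\mathcal{M}$. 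Each $T'_a$ preserves $\mathcal{M}$ and equals $T_a$ there (your first part), and the $T_a$ satisfy the braid relations \cite{chari}.
\item Since $\Psi_{i,q^p}\,\tau^{2r_i}(\Psi_{i,q^p})^{-1}=Y_{i,q^{p+r_i}}\in\mathcal{M}$, you get $D(\Psi_{i,q^p})=\tau^{2r_i}\bigl(D(\Psi_{i,q^p})\bigr)$.
\item A finitely supported Laurent monomial fixed by a nontrivial shift equals $1$. Hence $D$ is trivial, and the braid relations hold on $\mathcal{M}'$, hence on $\mathcal{Y}'$.
\end{itemize}
The trade-off is that this relies on Chari's theorem, which the paper treats as known. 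Your explicit route proves everything from scratch, at the cost of the $G_2$ computation.
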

For any reduced expression $w\in W$, we will denote by $T_w'$ the corresponding braid group operator.
\begin{remark}
    Notice that the definitions of the actions of $\mathcal{B}$ on $\mathcal{M}$ and on $\mathcal{M}'$ are essentially the same, except that the indices under the product symbols are exchanged. This could be an instance on Langlands duality between the variables $Y$'s and $\Psi$'s as discussed in other contexts \cite[Remark 3.7]{fh23}.
\end{remark}
Now we can state our main result.
\begin{theorem}\label{main thm}
    The bicharacter $\Omega'$ is $\mathcal{B}$-invariant, in the sense that for every $w$ in $W$ and every $M,N$ monomials in $\mathcal{M}'$, we have
    \begin{equation*}
        \Omega'\left(T'_w(M),T'_w(N)\right)=\Omega'(M,N).
    \end{equation*}
\end{theorem}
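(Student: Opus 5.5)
Since $T'_w$ is a composition of the generators $T'_i$ along a reduced expression, and each $T'_i$ is a group automorphism of $\mathcal{M}'$, it suffices to prove invariance under a single $T'_i$, $i\in I$. Since $\Omega'$ is a bicharacter and $T'_i$ is multiplicative, it further suffices to check
\[\Omega'\bigl(T'_i(\Psi_{j,q^r}),T'_i(\Psi_{k,q^s})\bigr)=\Omega'(\Psi_{j,q^r},\Psi_{k,q^s})\]
on pairs of generators. There are three cases: $j,k\neq i$, which is trivial since $T'_i$ fixes both generators; $j=i\neq k$; and $j=k=i$. By skew-symmetry (Proposition \ref{F antisimmetrica}) the mixed case need only be treated in one order.

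For the case $j=i$, $k\neq i$, expand by bilinearity. The difference $\Omega'(T'_i(\Psi_{i,q^r}),\Psi_{k,q^s})-\Omega'(\Psi_{i,q^r},\Psi_{k,q^s})$ becomes
\[-\Omega'(\Psi_{i,q^{r-2r_i}},\Psi_{k,q^s})-\Omega'(\Psi_{i,q^r},\Psi_{k,q^s})+\sum_{l}\Omega'(\Psi_{l,q^{\ast}},\Psi_{k,q^s}),\]
where $l$ runs over the neighbours of $i$ with the shifts prescribed by Definition \ref{braid action}. Write each term through Definition \ref{def F}, using the column index $k$ so that every sum is of the form $\sum_{\ell\geq1}\tilde c_{\bullet k}(\pm m'+r_k(-2\ell+1))$. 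Then group the terms with the same $\ell$ and the same sign of the argument. After a recentering $u=m-r_i+r_k(-2\ell+1)$ (or its negative), each group should be exactly the left-hand side of Lemma \ref{relation quantum cartan} for the pair $(i,k)$. The two $i$-terms have shifts $u\pm r_i$. The neighbour terms carry the shifts $u$, $u\pm1$, or $u,u\pm2$, according to $C_{il}=-1,-2,-3$; here Remark \ref{remark sugli r_i} is used to see that $r_i=1$ whenever $C_{il}\leq -2$, so the shifts in $T'_i$ match those in the lemma. Since $i\neq k$, the lemma applies for every $u$, so each group vanishes, and hence so does the whole difference. One must check that the sign conventions ($\Psi^{-1}$ for the $i$-term, and $C_{ik}$ rather than $C_{ki}$ in the products) agree with the row index $i$ of the lemma. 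This is presumably why Frenkel--Hernandez's $T'_i$ uses $C_{ik}$, unlike Chari's $T_i$.

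For the case $j=k=i$, expand both arguments. The cross terms can be regrouped so that the previous computation applies. Concretely, $\Omega'(T'_i\Psi_{i,q^r},T'_i\Psi_{i,q^s})=\Omega'(T'_i\Psi_{i,q^r},\Psi_{i,q^{s-2r_i}}^{-1})+\Omega'(T'_i\Psi_{i,q^r},P)$, where $P$ is the neighbour product. The terms $\Omega'(T'_i\Psi_{i,q^r},\Psi_{l,\ast})$ are handled by the mixed case already proved, which gives $\Omega'(\Psi_{i,q^r},\Psi_{l,\ast})$. What remains is $\Omega'(T'_i\Psi_{i,q^r},\Psi_{i,q^{s-2r_i}}^{-1})$ together with $\Omega'(\Psi_{i,q^r},P)$, and $P$ can be traded back using $T'_i\Psi_{i,q^s}=\Psi^{-1}_{i,q^{s-2r_i}}P$. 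This reduces the claim to an identity involving only $i$-$i$ and $i$-neighbour coefficients, namely the grouped sums of Lemma \ref{relation quantum cartan} with $j=i$. I expect this diagonal case to be the main obstacle. There, Lemma \ref{relation quantum cartan} fails at $u=0$ (the right-hand side is $\delta_{ii}=1$), so the telescoping leaves boundary terms. One must verify that these arising from $+m$ and from $-m$ cancel each other. The natural approach is to split into $m=s-r\geq0$ and $m<0$ (the latter by skew-symmetry), use Proposition \ref{prop tecnica} to truncate the sums, and then track exactly which $\ell$ produces $u=0$ in each of the two contributions. Both contributions should yield a correction of $\pm1$ with opposite signs, because the relevant arguments are symmetric under $m\mapsto -m$ in Definition \ref{def F}.
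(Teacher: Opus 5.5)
Your proposal is correct. The reduction to generators and the mixed case $j=i\neq k$ follow the paper: group by $\ell$ and recognise Lemma \ref{relation quantum cartan} for $(i,k)$, centred (with $m=s-r$) at $u=\pm(m+r_i)+r_k(-2\ell+1)$, with Remark \ref{remark sugli r_i} covering $C_{ik}\le -2$.

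The diagonal case is where you genuinely differ. Write $\Delta_i(p)$ for the neighbour product, so $T'_i(\Psi_{i,q^p})=\Psi_{i,q^{p-2r_i}}^{-1}\Delta_i(p)$ and your $P$ is $\Delta_i(s)$. Your steps (mixed case on $P$, trading $P$ back, skew-symmetry) give
\[ \Omega'(T'_i\Psi_{i,q^r},T'_i\Psi_{i,q^s})-\Omega'(\Psi_{i,q^r},\Psi_{i,q^s})=-D(r,s-2r_i)-D(s,r),\qquad D(r,x):=\Omega'(T'_i\Psi_{i,q^r},\Psi_{i,q^x})-\Omega'(\Psi_{i,q^r},\Psi_{i,q^x}). \]
The same $\ell$-grouping, now with the $(i,i)$ instance of the lemma (right-hand side $\delta_{u,0}$), gives $D(r,x)=\epsilon(x-r)$. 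Here $\epsilon(n)=1$ for $n\in 2r_i\ZZ_{\ge 0}$, $\epsilon(n)=-1$ for $n\in 2r_i\ZZ_{<0}$, and $\epsilon(n)=0$ otherwise. The $+m$ block hits $u=0$ at $\ell=1+\tfrac{m}{2r_i}$ and the $-m$ block at $\ell=-\tfrac{m}{2r_i}$. Since $\epsilon(m-2r_i)+\epsilon(-m)=0$ for every $m$, the $\pm1$ corrections cancel exactly as you predicted; the route goes through once this bookkeeping is written out.

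The paper never meets the $u=0$ defect. It uses translation invariance $\Omega'(\Psi_{i,q^{r-2r_i}},\Psi_{i,q^{s-2r_i}})=\Omega'(\Psi_{i,q^r},\Psi_{i,q^s})$ and applies the mixed case to $\Omega'(\Delta_i(r),\Delta_i(s))$. This reduces the claim to $\Omega'(\Psi_{i,q^r},\Delta_i(s))+\Omega'(\Psi_{i,q^{s-2r_i}},\Delta_i(r))=0$. That identity holds because $\Omega'(\Psi_{i,q^x},\Psi_{k,q^y})$ is odd in $y-x$ and the shifts in $\Delta_i(p)$ are symmetric about $p-r_i$, so the two terms are opposite (the paper's text says ``equal'', a slip).

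The paper's route is shorter and shows the diagonal case is a formal consequence of the mixed case and the shape of $T'_i$. Yours costs the boundary bookkeeping but produces the explicit defect $D(r,x)=\epsilon(x-r)$. That is exactly what evaluates terms like $\Omega'(\Psi_{i,q^{r-2}},T'_i(\Psi_{i,q^{r-2}}))=-1$ in the exponent of the quantum $QQ$-systems.
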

\begin{proof} 
It is enough to prove the statement on the generators of $\mathcal{M}$' and $\mathcal{B}$, that is for every $i,j,k\in I$ and every $r,s\in\ZZ$, we have to prove:
\begin{equation}\label{eq thm}
    \Omega'\left(T'_k(\Psi_{i,q^r}),T'_k(\Psi_{j,q^s})\right)=\Omega'(\Psi_{i,q^r},\Psi_{j,q^s}).
\end{equation}
    If we fix $(i,r),(j,s)$ in $V$, the result is immediately true for every $T'_k$ with $i\neq k\neq j$, by Definition \ref{braid action}. Thus we are left to prove two cases: the one with $i=k\neq j$ and the one with  $i=j=k$. In the first case we have that proving Equation \eqref{eq thm} is equivalent to prove that
    \[\Omega'\left(T'_i(\Psi_{i,q^r}),T'_i(\Psi_{j,q^s})\right)=\Omega'(\Psi_{i,q^r},\Psi_{j,q^s}).\]
    Then, by definition of $T_i'$ and by the properties of $\Omega'$, this is equivalent to:
\begin{equation}
\begin{split}
&-\Omega'(\Psi_{i,q^{r-2r_i}},\Psi_{j,q^s})+\sum_{\{k\in I\mid C_{ik}=-1\}}\Omega'(\Psi_{k,q^{r-r_i}},\Psi_{j,q^s})+\sum_{\{k\in I\mid C_{ik}=-2\}}\big(\Omega'(\Psi_{k,q^r},\Psi_{j,q^s})+\Omega'(\Psi_{k,q^{r-2}},\Psi_{j,q^s})\big)\\
&+\sum_{\{k\in I\mid C_{ik}=-3\}}\big(\Omega'(\Psi_{k,q^{r-1}},\Psi_{j,q^s})+\Omega'(\Psi_{k,q^{r+1}},\Psi_{j,q^s})+\Omega'(\Psi_{k,q^{r-3}},\Psi_{j,q^s})\big)-\Omega'(\Psi_{i,q^r},\Psi_{j,q^s})=0.
\end{split}
\end{equation}
    
    We set $m:=s-r$. Developing the LHS of the previous equation, using the definition of $\Omega'$, we obtain:
    \begin{gather*}
        \mathrm{LHS}=\sum_{\ell\geq 1}\Big(\tilde{c}_{ij}(m+2r_i+r_j(-2\ell+1))-\tilde{c}_{ij}(-m-2r_i+r_j(-2\ell+1))-\\
        -\sum_{\{k\in I\mid C_{ik}=-1\}}\tilde{c}_{kj}(m+r_i+r_j(-2\ell+1))+\sum_{\{k\in I\mid C_{ik}=-1\}}\tilde{c}_{kj}(-m-r_i+r_j(-2\ell+1))-\\
        -\sum_{\{k\in I\mid C_{ik}=-2\}}\tilde{c}_{kj}(m+r_j(-2\ell+1))+\sum_{\{k\in I\mid C_{ik}=-2\}}\tilde{c}_{kj}(-m+r_j(-2\ell+1))-\\
        -\sum_{\{k\in I\mid C_{ik}=-2\}}\tilde{c}_{kj}(m+2+r_j(-2\ell+1))+\sum_{\{k\in I\mid C_{ik}=-2\}}\tilde{c}_{kj}(-m-2+r_j(-2\ell+1))-\\
        -\sum_{\ell\geq 1}\sum_{\{k\in I\mid C_{ik}=-3\}}\tilde{c}_{kj}(m+1+r_j(-2\ell+1))+\sum_{\{k\in I\mid C_{ik}=-3\}}\tilde{c}_{kj}(-m-1+r_j(-2\ell+1))-\\
        -\sum_{\{k\in I\mid C_{ik}=-3\}}\tilde{c}_{kj}(m-1+r_j(-2\ell+1))+\sum_{\{k\in I\mid C_{ik}=-3\}}\tilde{c}_{kj}(-m+1+r_j(-2\ell+1))\\
        -\sum_{\{k\in I\mid C_{ik}=-3\}}\tilde{c}_{kj}(m+3+r_j(-2\ell+1))+\sum_{\{k\in I\mid C_{ik}=-3\}}\tilde{c}_{kj}(-m-3+r_j(-2\ell+1))-\\
        +\tilde{c}_{ij}(m+r_j(-2\ell+1))-\tilde{c}_{ij}(-m+r_j(-2\ell+1))\Big).
    \end{gather*}
    To conclude that the sum above is equal to $0$, we will use Lemma \ref{relation quantum cartan}. More precisely, if the sets $\{k\in I\mid C_{ik}=-2\}$ and $\{k\in I\mid C_{ik}=-3\}$ are both empty, then for each $\ell\geq 1$ we apply Lemma \ref{relation quantum cartan} with $u=m+r_i+r_j(-2\ell+1)$ (for the left column of sums) and with $u=-m-r_i+r_j(-2\ell+1)$ (for the right column of sums). Otherwise, if $i$ is such that for some $k$ in $I$, $C_{ik}<-1$, then by Remark \ref{remark sugli r_i} $r_i=1$, and for each $\ell\geq1$ we apply Lemma \ref{relation quantum cartan} with $u=m+1+r_j(-2\ell+1)$ and $u=-m-1+r_j(-2\ell+1)$. We now examine the case $i=j=k$. In this case we have to prove that for every $r,s\in \ZZ$:
        \begin{equation}\label{main claim i=j=k}
        \Omega'(T_i'(\Psi_{i,q^r}),T_i'(\Psi_{i,q^s}))=\Omega'(\Psi_{i,q^r},\Psi_{i,q^s}).
        \end{equation}
    For convenience we will use the following notation, for every $p\in \ZZ$: 
    \[\Delta_i(p):=\prod_{\{k\in I\mid C_{ik}=-1\}}\Psi_{k,q^{p-r_i}}\prod_{\{k\in I\mid C_{ik}=-2 \}}\Psi_{k,q^p}\Psi_{k,q^{p-2}}\prod_{\{k\in I\mid C_{ik}=-3\}}\Psi_{k,q^{p-1}}\Psi_{k,q^{p+1}}\Psi_{k,q^{p-3}},\] so that we can write\[T_i'(\Psi_{i,q^p})=\Psi_{i,q^{p-2r_i}}^{-1}\Delta_i(p).\]
    Observe that 
    \[\Omega'(\Psi_{i,q^{r-2r_i}},\Psi_{i,q^{s-2r_i}})=\Omega'(\Psi_{i,q^r},\Psi_{i,q^s}),\]
    then, developing the LHS of \eqref{main claim i=j=k}, we have that \eqref{main claim i=j=k} is equivalent to
    \begin{equation}\label{second i=j=k}
        \Omega'(\Delta_i(r),\Delta_i(s))=\Omega'(\Psi_{i,q^{r-2r_i}},\Delta_i(s))+\Omega'(\Delta_i(r),\Psi_{i,q^{s-2r_i}}).
    \end{equation}
    From the first half of the proof, we can deduce that for every $k\neq i$, we have
    \begin{equation}\label{delta}
        \Omega'(\Delta_i(r),\Psi_{k,q^s})=\Omega'(\Psi_{i,q^r},\Psi_{k,q^s})+\Omega'(\Psi_{i,q^{r-2r_i}},\Psi_{k,q^s}).
    \end{equation}
    Note that all the elements appearing in $\Delta_i(s)$ are variables of the form $\Psi_{k,q^p}$, with $k\neq i$. Thus, applying the properties of the bicharacter and \eqref{delta} (with $\Delta_i(s)$ replacing $\Psi_{k,q^s}$), we get
    \[\Omega'(\Delta_i(r),\Delta_{i}(s))=\Omega'(\Psi_{i,q^r},\Delta_i(s))+\Omega'(\Psi_{i,q^{r-2r_i}},\Delta_i(s)).\]
    Substituting this in the LHS of \eqref{second i=j=k}, we see that \eqref{second i=j=k} is equivalent to  
    \begin{equation}\label{final i=j=k}
        \Omega'(\Psi_{i,q^r},\Delta_i(s))+\Omega'(\Psi_{i,q^{s-2r_i}},\Delta_i(r))=0.
    \end{equation}
    If the fixed index $i$ is such that for every $k\neq i$ the only possible value for $C_{ik}$ is $-1$, then we have
    \begin{equation}\label{uno}
        \Omega'(\Psi_{i,q^r},\Delta_i(s))=\sum_{\{k\in I\mid C_{ik}=-1\}}\Omega'(\Psi_{i,q^r},\Psi_{k,q^{s-r_i}}),
    \end{equation}
    and
    \begin{equation}\label{due}
        \Omega'(\Psi_{i,q^{s-2r_i}},\Delta_i(r))=\sum_{\{k\in I\mid C_{ik}=-1\}}\Omega'(\Psi_{i,q^{s-2r_i}},\Psi_{k,q^{r-r_i}}).
    \end{equation} 
   By Definition \ref{def F} of $\Omega'$ we have that 
    \[\sum_{\{k\in I\mid C_{ik}=-1\}}\Omega'(\Psi_{i,q^r},\Psi_{k,q^{s-r_i}})=\sum_{\{k\in I\mid C_{ik}=-1\}}\Omega'(\Psi_{i,q^{s-2r_i}},\Psi_{k,q^{r-r_i}})\]
    so we conclude that \eqref{final i=j=k} holds.
    If, on the other hand, there exists an index $k\neq i$ such that $C_{ik}<-1$, then $r_i=1$ and we can argue similarly to conclude that \eqref{final i=j=k} holds also in this case. This concludes the proof.
\end{proof}

We get an analog and new result also for $\mathcal{M}$.

\begin{corollary}
    The bilinear form $\Omega$ is $\mathcal{B}$-invariant.
\end{corollary}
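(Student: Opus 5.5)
The corollary will follow by restriction from Theorem \ref{main thm}, using the compatibility results already established. The plan is to identify both the group and the action on $\mathcal{M}$ with restrictions of the corresponding data on $\mathcal{M}'$, and then to read off the invariance.

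First, recall that via relation \eqref{rel Y e psi} the group $\mathcal{M}$ is a subgroup of $\mathcal{M}'$. By Theorem \ref{thm 3.4 fh}, the operators $T_i'$ preserve $\mathcal{Y}$ and restrict to Chari's operators $T_i$. Consequently, for every $w\in W$ with a fixed reduced expression, $T_w'$ restricted to $\mathcal{M}$ equals $T_w$. The same holds for any element $T\in\mathcal{B}$ written as a word in the generators $t_i^{\pm1}$, since inverses also restrict. In particular, $T(M)$ and $T(N)$ lie in $\mathcal{M}$ whenever $M,N\in\mathcal{M}$.

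Second, by Lemma \ref{restrizione Lambda}, $\Omega'$ restricted to $\mathcal{M}\times\mathcal{M}$ equals $\Omega$. For $M,N\in\mathcal{M}$ we therefore get
\[\Omega(T(M),T(N))=\Omega'(T'(M),T'(N))=\Omega'(M,N)=\Omega(M,N),\]
where the middle equality is Theorem \ref{main thm}.

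The only point needing care is that Theorem \ref{main thm} is stated for $T'_w$ with $w\in W$, while the corollary should hold for all of $\mathcal{B}$. This is harmless: the proof of Theorem \ref{main thm} checks invariance on each generator $T_k'$. Invariance under $T_k'$ implies invariance under $(T_k')^{-1}$ by substituting $M\mapsto (T_k')^{-1}M$ and $N\mapsto (T_k')^{-1}N$. Invariance is also preserved under composition, so it holds for the whole group generated by the $T_k'$. Thus no real obstacle remains.
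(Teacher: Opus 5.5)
Your proof is correct and follows the paper's argument. The paper cites exactly Theorem \ref{thm 3.4 fh} (the $T_i'$ restrict to Chari's $T_i$ on $\mathcal{M}$) and Lemma \ref{restrizione Lambda} ($\Omega'|_{\mathcal{M}\times\mathcal{M}}=\Omega$), combined with Theorem \ref{main thm}. Your extra step, passing from the operators $T'_w$ to all of $\mathcal{B}$ via generators and inverses, is a valid detail the paper leaves implicit; it is justified because the proof of Theorem \ref{main thm} checks invariance on each generator $T'_k$.
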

\begin{proof}
    This follows from Lemma \ref{restrizione Lambda} and Theorem \ref{thm 3.4 fh}.
\end{proof}
\begin{example}
    We compute an example in type $B_2$. The Cartan matrix is
    \[C(B_2)=\begin{pmatrix}
        2 & -1\\
        -2 & 2
    \end{pmatrix}\]
    and $r_1=2$, $r_2=1$. The quantum Cartan matrix and its inverse are
    \[C(z)=\begin{pmatrix}
        z+z^{-2}&  -1\\
        -(z+z^{-1})& z+z^{-1}
    \end{pmatrix},\quad \tilde{C}(z)=\frac{1}{z^3+z^{-3}}\begin{pmatrix}
        z+z^{-1} & 1\\
        z+z^{-1} & z^2+z^{-2}
    \end{pmatrix}.\]
    Note that, applying Lemma \ref{relation quantum cartan} with $i=2,j=1$, we get for every $u\in \ZZ$:
    \begin{equation}
        \tilde{c}_{21}(u-1)+\tilde{c}_{21}(u+1)=\tilde{c}_{11}(u-1)+\tilde{c}_{11}(u+1).
    \end{equation}
    This is equivalent to 
    \[(\tilde{c}_{21}-\tilde{c}_{11})(u+1)=(\tilde{c}_{21}-\tilde{c}_{11})(u-1).\] Since for $u\leq0$ the RHS is zero, the recursive formula let us conclude that
    \begin{equation}\label{uguaglianza c in B2}
        \tilde{c}_{21}(u)=\tilde{c}_{11}(u), \forall u\in\ZZ.
    \end{equation}
    Many other details and properties about the quantum Cartan datum in type $B_2$ can be found in \cite[Example 4.2, Appendix A]{ho_quantumclusterKL}.
    Using this, we have the following values of the coefficient $\tilde{c}_{ij}(u)$, for every $u\in \ZZ$:
    \begin{equation*}
        \begin{array}{llll}

        \tilde{c}_{11}(2u+1)=\tilde{c}_{21}(2u+1)=0 &\tilde{c}_{22}(2u)=0 & \tilde{c}_{12}(2u)=0 \\
        \tilde{c}_{11}(2)=\tilde{c}_{21}(2)=1 &\tilde{c}_{22}(1)=1 &\tilde{c}_{12}(1)=0 &\\
        \tilde{c}_{11}(4)=\tilde{c}_{21}(4)=1  &\tilde{c}_{22}(3)=0 &\tilde{c}_{12}(3)=1 &\\
        \tilde{c}_{11}(6)=\tilde{c}_{21}(6)=0 &\tilde{c}_{22}(5)=1 &\tilde{c}_{12}(5)=0 &\\
        \tilde{c}_{11}(8)=\tilde{c}_{21}(8)=-1 &\tilde{c}_{22}(7)=-1 &\tilde{c}_{12}(7)=0 &\\
        \tilde{c}_{11}(10)=\tilde{c}_{21}(10)=-1 &\tilde{c}_{22}(9)=0 &\tilde{c}_{12}(9)=-1 &\\
        \vdots & \vdots & \vdots &
    \end{array}
    \end{equation*}
    We have that 
    \[\Omega(Y_{1,q^0}, Y_{1,q^2})=\tilde{c}_{11}(2+r_1)=1.\]
    On the other hand, if we apply the braid operator $T_1$ we get
    \[T_1(Y_{1,q^0})=Y_{1,q^{-2}}^{-1}Y_{2,q^{-1}}Y_{2,q^{-3}},\quad T_1(Y_{1,q^2})=Y_{1,q^{0}}^{-1}Y_{2,q}Y_{2,q^{-1}}.\] Thus we have
    \begin{gather*}
        \Omega(T_1(Y_{1,q^0}),T_1(Y_{1,q^2}))=1,
    \end{gather*}
    which is equal to $\Omega(Y_{1,q^0}, Y_{1,q^2})$.
\end{example}

\section{Automorphisms of quantum tori}\label{sec q tori}
As a first application of our main result, we define a quantization $\mathcal{Y}'_t$ of $\mathcal{Y}'$ (Definition \ref{definew}), which generalizes to non simply-laced types the construction of \cite{b21}. Combining with Theorem \ref{main thm} we also prove that there exists a braid group action by automorphisms on this quantum torus (Theorem \ref{teo q tori}).

Let $t$ be a fixed formal variable.
\begin{definition}[\cite{halgebraicqt}]
   The quantum torus $(\mathcal{Y}_t,\ast)$ is the $\ZZ[t^{\pm \frac{1}{2}}]$-algebra generated by $\{Y_{i,q^r}^{\pm 1}\}_{(i,r)\in V}$, subject to the $t$-commutation relations
   \[Y_{i,q^r}\ast Y_{j,q^s}=t^{\Omega(Y_{i,q^r},Y_{j,q^s})}Y_{j,q^s}\ast Y_{i,q^r}.\]
  
\end{definition}

\begin{definition}\label{definew}
    The quantum torus $(\mathcal{Y}'_t,\star)$ is the $\ZZ[t^{\pm \frac{1}{2}}]$-algebra generated by $\{\Psi_{i,q^r}^{\pm 1}\}_{(i,r)\in V}$, subject to the $t$-commutation relations
   \[\Psi_{i,q^r}\star \Psi_{j,q^s}=t^{\Omega'(\Psi_{i,q^r},\Psi_{j,q^s})}\Psi_{j,q^s}\star \Psi_{i,q^r}.\]
   
\end{definition}

We can say that $\Omega(Y_{i,q^r},Y_{j,q^s})$ (resp. $\Omega'(\Psi_{i,q^r},\Psi_{j,q^s})$) "measures" how $Y_{i,q^r},Y_{j,q^s}$ (resp. $\Psi_{i,q^r},\Psi_{j,q^s}$) $t$-commute.

In simply-laced type, $\mathcal{Y}'_t$ was defined in \cite{b21} and contains a quantum torus defined by Nakajima \cite{nak04} and Varagnolo--Vasserot \cite{vv}.
Note that thank to Remark \ref{N antisimmetrica} and Proposition \ref{F antisimmetrica}, the relations in the definitions above are well defined.

When dealing with quantum tori, it is useful to consider the so-called \textit{commutative} monomials. Consider $(\mathcal{Y}'_t,\star)$ (the same works for $(\mathcal{Y}_t,\ast)$). Note that for every $(i,r), (j,s)$ in $V$, the expression
\[t^{\frac{1}{2}\Omega'(\Psi_{j,q^s},\Psi_{i,q^r})}\Psi_{i,q^r}\star \Psi_{j,q^s}=t^{\frac{1}{2}\Omega'(\Psi_{i,q^r},\Psi_{j,q^s})}\Psi_{j,q^s}\star\Psi_{i,q^r}\] is invariant under permutation of $(i,r)$ and $(j,s)$. Thus, we denote 
\[\Psi_{i,q^r}\Psi_{j,q^s}:=t^{\frac{1}{2}\Omega'(\Psi_{j,q^s},\Psi_{i,q^r})}\Psi_{i,q^r}\star \Psi_{j,q^s}
\in \mathcal{Y}_{t}'\] and we call it a \textit{commutative monomial}. We can generalize this concept to the product of several variables. A point to keep in mind is that the product $\star$ in $\mathcal{Y}'_t$ depends on the order. To solve this technicality, let us denote by $\overset{\rightarrow}{\star}$ the ordered product of elements in $\mathcal{Y}'_{t}$. Then, for a family of integers $(n_{(i,r)})_{(i,r)\in V}$ with only finitely many non-zero components, we define
\[\prod_{(i,r)\in V}\Psi_{i,q^r}^{n_{(i,r)}}:=t^{\frac{1}{2}\sum_{(i,r)<(j,s)}n_{(i,r)}n_{(j,s)}\Omega'(\Psi_{j,q^s},\Psi_{i,q^r})}\ \overset{\rightarrow}{\star}\Psi_{i,q^r}^{n_{(i,r)}}.\] Then, we can consider either the commutative or non-commutative product of commutative monomials. 

\begin{proposition}\label{inclusion quantum tori}
    There is an injective $\ZZ[t^{\pm 1/2}]$-algebras morphism $\mathcal{Y}_t\hookrightarrow \mathcal{Y}'_t$ defined by
    \[Y_{i,q^{r}}\mapsto\Psi_{i,q^{r-r_i}}\Psi_{i,q^{r+r_i}}^{-1},\] where the product on the right is commutative.
\end{proposition}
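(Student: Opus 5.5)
The plan is to use the universal property of the quantum torus $\mathcal{Y}_t$ as a presented algebra, and then prove injectivity by tracking leading monomials. For each $(i,r)\in V$ set
\[
\widehat{Y}_{i,q^r}:=\Psi_{i,q^{r-r_i}}\Psi_{i,q^{r+r_i}}^{-1}\in\mathcal{Y}'_t,
\]
the commutative monomial. With two variables this means $\widehat{Y}_{i,q^r}=t^{\frac12\Omega'(\Psi_{i,q^{r+r_i}}^{-1},\Psi_{i,q^{r-r_i}})}\Psi_{i,q^{r-r_i}}\star\Psi_{i,q^{r+r_i}}^{-1}$, and it is invertible in $\mathcal{Y}'_t$.

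\textbf{Step 1: $t$-commutation of commutative monomials.} Since $\Omega'$ is a bicharacter, any two commutative monomials $M,N$ of $\mathcal{Y}'_t$, viewed as elements of $\mathcal{M}'$, satisfy
\[
M\star N=t^{\Omega'(M,N)}N\star M.
\]
To check this, write $M$ and $N$ as ordered $\star$-products of generators times powers of $t$. Then move each generator of $M$ past each generator of $N$ using the defining relations of Definition \ref{definew}. The exponents add up to $\sum n_{(i,r)}m_{(j,s)}\Omega'(\Psi_{i,q^r},\Psi_{j,q^s})=\Omega'(M,N)$. In particular $\widehat{Y}_{i,q^r}\star\widehat{Y}_{j,q^s}=t^{\Omega'(Y_{i,q^r},Y_{j,q^s})}\widehat{Y}_{j,q^s}\star\widehat{Y}_{i,q^r}$, where $Y_{i,q^r}$ is viewed in $\mathcal{M}'$ via \eqref{rel Y e psi}. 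By Lemma \ref{restrizione Lambda} this exponent equals $\Omega(Y_{i,q^r},Y_{j,q^s})$. The elements $\widehat{Y}_{i,q^r}^{\pm1}$ therefore satisfy the defining relations of $\mathcal{Y}_t$, and the assignment extends to a $\ZZ[t^{\pm1/2}]$-algebra morphism $\iota:\mathcal{Y}_t\to\mathcal{Y}'_t$.

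\textbf{Step 2: injectivity.} Both $\mathcal{Y}_t$ and $\mathcal{Y}'_t$ are free $\ZZ[t^{\pm1/2}]$-modules. Their bases are the commutative monomials indexed by $\mathcal{M}$ and $\mathcal{M}'$ respectively; this is the standard basis of a quantum torus, obtained by reordering with the $t$-commutation relations.

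We claim that $\iota$ sends the commutative monomial $\prod Y_{i,q^r}^{n_{(i,r)}}$ to $t^{a}$ times the commutative monomial of $\mathcal{M}'$ corresponding to $\prod Y_{i,q^r}^{n_{(i,r)}}$ under \eqref{rel Y e psi}, for some half-integer $a$. In fact the normalizations match and $a=0$, but only invertibility of $t^a$ is needed. Granting the claim, injectivity of $\iota$ reduces to injectivity of the group morphism $\mathcal{M}\to\mathcal{M}'$, $Y_{i,q^r}\mapsto\Psi_{i,q^{r-r_i}}\Psi_{i,q^{r+r_i}}^{-1}$.

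That group morphism is injective. Fix $i$ and suppose a finite product $\prod_r Y_{i,q^r}^{n_r}$ maps to $1$. Take $r_0$ maximal with $n_{r_0}\neq0$. The variable $\Psi_{i,q^{r_0+r_i}}$ then occurs only from the factor $Y_{i,q^{r_0}}$, with exponent $-n_{r_0}\neq0$, which is a contradiction. Distinct indices $i$ involve disjoint sets of variables, so the argument applies to each $i$ separately.

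\textbf{Main obstacle.} The only delicate point is Step 1 together with the bookkeeping in Step 2. One must confirm that a $\star$-product of commutative monomials is a power of $t$ times the commutative monomial of the product, and that the relevant exponents come from $\Omega'$ evaluated on $\mathcal{M}'$. This uses the bicharacter property and Lemma \ref{restrizione Lambda}, and otherwise involves no new identities.
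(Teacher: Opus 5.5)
Your proposal is correct and takes the same approach as the paper. The paper also verifies the defining $t$-commutation relations on the images via Lemma \ref{restrizione Lambda}, and deduces injectivity from the fact that commutative monomials form a $\ZZ[t^{\pm 1/2}]$-basis of $\mathcal{Y}_t$ with linearly independent images. You in addition supply the steps the paper leaves implicit: the bicharacter computation showing $M\star N=t^{\Omega'(M,N)}N\star M$ for commutative monomials, and the injectivity of the group map $\mathcal{M}\to\mathcal{M}'$, which is what makes the images linearly independent.
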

\begin{proof}
    We have to check that the $t$-commutation relations between two elements $Y_{i,q^r}$ and $Y_{j,q^s}$ in $\mathcal{Y}_t$ is the same as the $t$-commutation relation between their images in $\mathcal{Y}'_t$. In other words, recalling $\Omega$ and $\Omega'$ from Definition \ref{def N} and Definition \ref{def F}, we have to prove that
    \[\Omega(Y_{i,q^r},Y_{j,q^s})=\Omega'(\Psi_{i,q^{r-r_i}}\Psi_{i,q^{r+r_i}}^{-1},\Psi_{j,q^{s-r_j}}\Psi_{j,q^{s+r_j}}^{-1}),\] but this is true by Lemma \ref{restrizione Lambda}. The injectivity is clear as the commutative monomials form a $\mathbb{Z}[t^{\pm 1/2}]$-basis of $\mathcal{Y}_t$ and their images are linearly independent.
\end{proof}
The following Definition is natural:
\begin{definition}
    For every $i\in I$, let $T'_{t,i}:\mathcal{Y}'_t\to\mathcal{Y}'_t$ be the map defined on the generators as in Definition \ref{braid action}, where now the monomials on the right are meant to be commutative. Similarly, for every $i\in I$, let $T_{t,i}:\mathcal{Y}_t\to\mathcal{Y}_t$ be the map defined on the generators as in Definition \ref{def braid Y}, with the monomials now meant to be commutative.
\end{definition}

The following establishes that these maps are well-defined.

\begin{theorem}\label{teo q tori}
    For every $i\in I$, the maps $T'_{t,i}$ is an automorphism of the quantum torus $\mathcal{Y}'_{t}$. In particular, there is a braid group action on $\mathcal{Y}'_t$ generated by the automorphisms $\{T'_{t,i}\}_{i\in I}$. Moreover, the braid group action can be restricted to $\mathcal{Y}_t$, via the automorphisms $(T_{t,i})_{i\in I}$.
\end{theorem}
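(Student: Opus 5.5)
To prove that $T'_{t,i}$ extends to an algebra endomorphism of $\mathcal{Y}'_t$, I would use the universal property of the quantum torus. $\mathcal{Y}'_t$ is presented by the invertible generators $\Psi_{j,q^r}^{\pm1}$ subject only to the $t$-commutation relations. So a map on generators extends to a $\ZZ[t^{\pm1/2}]$-algebra morphism as soon as the images are invertible and satisfy the same $t$-commutation relations.

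\textbf{Invertibility of the images.} Each image $T'_{t,i}(\Psi_{j,q^r})$ is a commutative monomial, hence invertible in $\mathcal{Y}'_t$. Its inverse is the commutative monomial with all exponents negated.

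\textbf{Preservation of the relations.} The basic fact to record is that for commutative monomials $M,N$ (indexed by their exponent vectors in $\mathcal{M}'$) one has
\[M\star N=t^{\Omega'(M,N)}N\star M.\]
This follows from the generator relations, because $\Omega'$ is a bicharacter and the normalizing powers of $t$ only rescale $M$ and $N$. Granting this, the image of the relation $\Psi_{j,q^r}\star\Psi_{k,q^s}=t^{\Omega'(\Psi_{j,q^r},\Psi_{k,q^s})}\Psi_{k,q^s}\star\Psi_{j,q^r}$ holds if and only if
\[\Omega'\big(T'_i(\Psi_{j,q^r}),T'_i(\Psi_{k,q^s})\big)=\Omega'(\Psi_{j,q^r},\Psi_{k,q^s}),\]
which is exactly Theorem \ref{main thm} applied with $w=s_i$. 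The same argument applies to $\Psi^{-1}$ generators, so $T'_{t,i}$ extends to an algebra endomorphism.

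\textbf{Bijectivity.} The classical $T'_i$ is an automorphism of $\mathcal{M}'$, and Theorem \ref{main thm} also holds for $(T'_i)^{-1}$ because invariance passes to inverses. Hence the map sending $\Psi_{j,q^r}$ to the commutative monomial $(T_i')^{-1}(\Psi_{j,q^r})$ is again an algebra endomorphism. Both composites send each generator to itself. The key point here is that a morphism of quantum tori that respects the relations sends commutative monomials to commutative monomials: the image of $\prod\Psi^{n}$ is the commutative monomial of the image exponent vector, since the normalizing powers of $t$ are computed with $\Omega'$, which is preserved. The composites are therefore the identity, so $T'_{t,i}$ is an automorphism.

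\textbf{Braid relations.} By the remark on commutative monomials, each $T'_{t,i}$ acts on the basis of commutative monomials exactly as the classical $T'_i$ acts on $\mathcal{M}'$. Theorem \ref{thm 3.4 fh} says the $T'_i$ satisfy the braid relations on $\mathcal{M}'$, and two algebra maps agreeing on generators coincide. Hence the $T'_{t,i}$ satisfy the braid relations too.

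\textbf{Restriction to $\mathcal{Y}_t$.} Use Proposition \ref{inclusion quantum tori}, under which $Y_{j,q^r}$ is the commutative monomial $\Psi_{j,q^{r-r_j}}\Psi_{j,q^{r+r_j}}^{-1}$. Then $T'_{t,i}(Y_{j,q^r})$ is the commutative monomial attached to $T'_i(Y_{j,q^r})=T_i(Y_{j,q^r})$ (Theorem \ref{thm 3.4 fh}), which is the image of $T_{t,i}(Y_{j,q^r})$ in $\mathcal{Y}'_t$. The subring $\mathcal{Y}_t$ is therefore stable, and $T_{t,i}$ is the restriction. Invertibility on $\mathcal{Y}_t$ follows because $T_i^{-1}$ preserves $\mathcal{M}$.

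I expect the main obstacle to be bookkeeping rather than mathematics: verifying carefully that the powers of $t^{1/2}$ in commutative monomials are compatible with composition. This is what guarantees that the $t$-twisted maps satisfy the braid relations on the nose and not merely up to powers of $t$.
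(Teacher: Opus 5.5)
Your proposal is correct and follows the same route as the paper. The paper's proof simply cites Theorem \ref{main thm} for preservation of the $t$-commutation relations and Theorem \ref{thm 3.4 fh} for the braid relations and the restriction to $\mathcal{Y}_t$; you additionally make explicit what it leaves implicit: the universal property of the quantum torus, the fact that $\Omega'$-preserving maps send commutative monomials to commutative monomials (which yields invertibility via $(T'_i)^{-1}$ and gives the braid relations exactly, with no stray powers of $t$), and the compatibility with the embedding of Proposition \ref{inclusion quantum tori}.
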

\begin{proof}
    The result follows from Theorem \ref{thm 3.4 fh} and Theorem \ref{main thm}, which guarantees that the $t$-relations in $\mathcal{Y}'_t$ are preserved by $T'_{t,i}$, for every $i\in I$. 
\end{proof}
\section{Quantum \texorpdfstring{$QQ$}{QQ}-systems}\label{section QQ}
\label{section QQ}
We give another application of our main result to quantum $QQ$-systems (Theorem \ref{preciseqQQ}). First, we give some reminders on $QQ$-systems and their cluster-theoretic role in \cite{ghl}. We recall the quantum cluster algebra $\mathcal{A}_t$ constructed in \cite{pag25} and its connection with our quantum torus $\mathcal{Y}'_t$. Then, we use Theorem \ref{main thm} to state explicitly the quantum $QQ$-systems in all simply-laced types. In particular, we establish that the power of $t$ that appear in the system is the same for all simply-laced types.

We need some more standard notation. For every $i\in I$, we denote by $s_i$ the simple reflection in the Weyl group $W$ and by $\omega_i$ the set of fundamental weights of $\mathfrak{g}$. Moreover, let $P$ be the integral weight lattice of $\mathfrak{g}$. In the following we will deal with quantum cluster algebras, non commutative versions of Fomin--Zelevinsky's cluster algebras. Since here we want to present an application of our main result, we skip the reminders on (quantum) cluster algebras and we refer the reader to the foundational papers \cite{fz1}, \cite{bz05}.

In \cite[Section 5]{fh23}, the authors established an important system of algebraic identities, called \textit{extended} $QQ$-\textit{systems}. These identities find their representation-theoretic meaning in the Grothendieck ring $K(\mathcal{O}^{\sh})$ of the category $\mathcal{O}^{\sh}$ of representations of shifted quantum affine algebras and hold for any underlying $\mathfrak{g}$. A renormalized version of this system is presented in \cite[Section 7.2]{ghl} for simply-laced types. Here the variables that satisfy the system are elements in a completion $\widetilde{\mathcal{Y}'_e}$ of $\mathcal{Y}'$ (this\footnote{More precisely, in \cite{ghl} $\widetilde{\mathcal{Y}'_e}$ is a completion of a scalar extension of $\mathcal{Y}'$ with the group ring of $P\otimes\QQ$.} and other completions, indexed by Weyl group elements, were defined first in \cite{fh22}) and are denoted $\underline{Q}_{w(\omega_i),q^r}$, for every $w\in W$ and $(i,r)\in V= I \times \mathbb{Z}$. Let us state the system and then comment more about it. 
\begin{proposition}[{\cite[Proposition 7.2]{ghl}}]
    For a fixed $\mathfrak{g}$ of simply-laced type, for $(i,r)\in V$ and $w\in W$ such that $ws_i>w$, we have
    \begin{equation}\label{classic QQ}
        \underline{Q}_{ws_i(\omega_i),q^r}\underline{Q}_{w(\omega_i),q^{r-2}}-\underline{Q}_{ws_i(\omega_i),q^{r-2}}\underline{Q}_{w(\omega_i),q^r}=\prod_{\{j:C_{ij}=-1\}}\underline{Q}_{w(\omega_j),q^{r-1}}.
    \end{equation}
\end{proposition}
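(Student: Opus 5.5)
Since this identity is quoted from \cite{ghl}, the plan is not to rederive it from scratch. I would instead reduce it to the case $w=e$ by transporting along the Weyl group symmetry of Frenkel--Hernandez, and then identify the $w=e$ case with the extended $QQ$-system of \cite[Section 5]{fh23} after the renormalization of \cite{ghl}.

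\textbf{Step 1 (reduction to $w=e$).} By \cite{fh22, fh23}, $W$ acts by ring isomorphisms $\Theta_w$ between the completions $\widetilde{\mathcal{Y}'_e}$ and $\widetilde{\mathcal{Y}'_w}$. The $\underline{Q}$-variables are constructed so that, up to the normalization of \cite[Section 7.2]{ghl}, $\Theta_w(\underline{Q}_{\omega_i,q^r})=\underline{Q}_{w(\omega_i),q^r}$. Their leading monomials are governed by the braid operators $T'_w$ of Definition \ref{braid action}. I would first check that this normalization is compatible with composition: for $ws_i>w$ one needs $\Theta_w(\underline{Q}_{s_i(\omega_i),q^r})=\underline{Q}_{ws_i(\omega_i),q^r}$, which uses $\ell(ws_i)=\ell(w)+1$ and the braid relations (Theorem \ref{thm 3.4 fh}). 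Applying the ring morphism $\Theta_w$ to the $w=e$ identity then yields \eqref{classic QQ}, because every factor is transported to the factor carrying the same index.

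\textbf{Step 2 (the case $w=e$).} Here $\underline{Q}_{\omega_i,q^r}$ is the renormalized $q$-character of the positive prefundamental representation $L(\Psi_{i,q^r})$, and $\underline{Q}_{s_i(\omega_i),q^r}$ is the renormalized class $\widetilde{Q}_{i,q^r}$ of \cite{fh23}. The identity
\[
\underline{Q}_{s_i(\omega_i),q^r}\underline{Q}_{\omega_i,q^{r-2}}-\underline{Q}_{s_i(\omega_i),q^{r-2}}\underline{Q}_{\omega_i,q^r}=\prod_{C_{ij}=-1}\underline{Q}_{\omega_j,q^{r-1}}
\]
is then a rewriting of the $QQ$-system proved in $K(\mathcal{O}^{\sh}_{\ZZ})$ in \cite{fh23}. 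One verifies it by comparing leading monomials via Definition \ref{braid action}, namely $T_i'(\Psi_{i,q^r})\Psi_{i,q^{r-2}}=\prod_{C_{ij}=-1}\Psi_{j,q^{r-1}}$. One then uses the explicit $q$-character formulas: the quotient of each $\underline{Q}$ by its leading monomial is a series in the variables $A_{j,q^p}^{-1}$, and for $\widetilde{Q}_{i}$ it involves only $A_{j}^{-1}$ with $j\neq i$, as in the $\mathfrak{sl}_2$ reduction.

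\textbf{Main obstacle.} I expect the hardest part to be the bookkeeping of the weight factors, i.e.\ the group ring of $P\otimes\QQ$, and of the normalizations. The unrenormalized $QQ$-system of \cite{fh23} carries prefactors such as $[\pm\alpha_i/2]$, and one must check that the renormalization in \cite{ghl} absorbs them exactly, so that the coefficient of the right-hand side is $1$ and the sign is as written. I would handle this by tracking only the leading terms with their weights, using the relation \eqref{rel Y e psi} with $r_i=1$, which holds in the simply-laced case.
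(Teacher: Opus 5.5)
\textbf{There is a genuine gap, and it is in Step 1: the transport of the renormalized identity along $\Theta_w$ cannot work as stated.}

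You take $\Theta$ to be an action of $W$, as in \cite{fh22,fh23}. Then $\Theta_w\Theta_{s_i}=\Theta_{ws_i}$ for \emph{every} $w$. Your transport needs $\Theta_w(\underline{Q}_{\omega_k,a})=\underline{Q}_{w(\omega_k),a}$ for all $w$ and $k$. If that held, $\Theta_w(\underline{Q}_{s_i(\omega_i),a})=\underline{Q}_{ws_i(\omega_i),a}$ would hold for all $w$ as well. Applying $\Theta_w$ to the case $w=e$ would then give \eqref{classic QQ} also when $ws_i<w$, and that is false.

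To see this, take $w$ with $ws_i<w$ and put $v=ws_i$. Then $vs_i=w>v$, and $v(\omega_j)=w(\omega_j)$ for $j\neq i$. The true identity for $v$ reads
\[
\underline{Q}_{w(\omega_i),q^r}\underline{Q}_{ws_i(\omega_i),q^{r-2}}-\underline{Q}_{w(\omega_i),q^{r-2}}\underline{Q}_{ws_i(\omega_i),q^r}=\prod_{\{j:C_{ij}=-1\}}\underline{Q}_{w(\omega_j),q^{r-1}}.
\]
Its left-hand side is minus the left-hand side of \eqref{classic QQ} for $w$. Together the two would force $\prod_{\{j:C_{ij}=-1\}}\underline{Q}_{w(\omega_j),q^{r-1}}=0$, which is absurd since its leading term is a nonzero monomial.

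So the renormalized $\underline{Q}$-variables are not $\Theta$-equivariant. The factors (signs and weight factors in the group ring of $P\otimes\QQ$) relating $\Theta_w$ of a $\underline{Q}$-variable to the $\underline{Q}$-variable with the transported index cannot all be trivial. Pinning them down is exactly what produces the sign, the coefficient $1$ on the right-hand side, and the hypothesis $ws_i>w$ (i.e.\ $w(\alpha_i)>0$). What you call bookkeeping is therefore the actual content, and your plan leaves it undone.

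Your account of where $ws_i>w$ enters also conflates $\Theta_w$ with the braid operators. Length additivity gives $T'_wT'_i=T'_{ws_i}$ (Theorem \ref{thm 3.4 fh}), which matters for leading terms via Proposition \ref{hw QQ}. It plays no role in composing the $\Theta$'s. Tracking leading monomials will not close the gap by itself either. It fixes only the coefficient of whichever product has the highest leading monomial, relative to the right-hand side, and says nothing about the other product. As in your Step 2, matching leading monomials is a consistency check, not a proof.

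\textbf{Two further points.}

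First, $\Theta_w$ takes values in $\widetilde{\mathcal{Y}'_w}$, whereas \eqref{classic QQ} is an identity in $\widetilde{\mathcal{Y}'_e}$ (equivalently in $K(\mathcal{O}^{\sh}_{\ZZ})$). You need the transported series to coincide with the $\underline{Q}_{w(\cdot)}$ viewed in $\widetilde{\mathcal{Y}'_e}$, whose leading terms are the $T'_w(\Psi_{i,q^r})$. That identification is a substantial input from \cite{fh23} and \cite{ghl}, not a formal consequence of applying a ring morphism.

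Second, for comparison: the paper does not reprove this proposition. It quotes it from \cite[Proposition 7.2]{ghl}, where it is the renormalization of the extended $QQ$-system of \cite[Section 5]{fh23}. That system is already formulated for general $w$, not only $w=e$, so no reduction to $w=e$ is needed. A self-contained argument would have to supply the comparison between the normalizations of \cite{fh23} and those of \cite{ghl} for $ws_i>w$, including signs and weight factors. Your plan defers exactly that step.
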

A significant property of these identities is that in \cite{ghl} they serve as a subset of exchange relations in the cluster algebra structure on $K(\mathcal{O}^{\sh})$, for simply-laced $\mathfrak{g}$. Let us call $\mathcal{A}$ this cluster algebra. In particular, the variables $\underline{Q}_{w(\omega_i),q^r}$ can be interpreted as cluster variables in $\mathcal{A}$.
\begin{remark}
    $QQ$-systems hold also for non simply-laced types, however in this Section we are interested in their cluster-theoretic meaning. Thus, as for the moment the results in \cite{ghl} are available only for simply-laced types, we also restrict our attention to this setting.
\end{remark}
Although the $\underline{Q}$-variables are formal power series, their leading term with respect to a fixed partial order is a monomial in $\mathcal{M}'$ and can be computed. Let us denote by $\Psi_{w(\omega_i),q^r}$ the leading term of $\underline{Q}_{w(\omega_i),q^r}$. Then, from \cite[Proposition 3.6]{fh23}, we have:
\begin{proposition}\label{hw QQ}
    For every $w\in W$, $(i,r)\in V$, the leading term $\Psi_{w(\omega_i),q^r}$ of $\underline{Q}_{w(\omega_i),q^r}$ can be computed \footnote{In fact, the leading term of any $\underline{Q}_{w(\omega_i),q^r}$ has also a factor in $P\otimes\QQ$. However, it is not relevant for our computations and to make the text lighter we will omit it. See \cite[Section 7.2]{ghl} for details.} as
    \begin{equation}
    \Psi_{w(\omega_i),q^r}=T_w'(\Psi_{i,q^r}).
\end{equation}
\end{proposition}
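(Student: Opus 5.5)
This statement is attributed to \cite[Proposition 3.6]{fh23}, so the plan is to reconstruct their argument in the present normalization rather than to prove something new. The leading term of $\underline{Q}_{w(\omega_i),q^r}$ is defined in \cite{fh22,fh23,ghl} through the Weyl group action on completions $\widetilde{\mathcal{Y}'_w}$: one sets $\underline{Q}_{w(\omega_i),q^r}:=\Theta_w(\underline{Q}_{i,q^r})$, where $\underline{Q}_{i,q^r}$ has leading term $\Psi_{i,q^r}$ and $\Theta_w=\Theta_{i_1}\cdots\Theta_{i_\ell}$ for a reduced expression $w=s_{i_1}\cdots s_{i_\ell}$. The statement reduces to two claims. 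First, each $\Theta_j$ sends an element with leading monomial $M$ to an element with leading monomial $T_j'(M)$, on the subspace where this makes sense. Second, these leading-term maps compose, giving $T'_w=T'_{i_1}\cdots T'_{i_\ell}$, which is well defined by Theorem \ref{thm 3.4 fh}.

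I will proceed by induction on the length of $w$. The base case $w=e$ is the normalization of $\underline{Q}_{\omega_i,q^r}$. For the inductive step, write $w=s_jw'$ with $\ell(w)=\ell(w')+1$. Apply $\Theta_j$ to $\underline{Q}_{w'(\omega_i),q^r}$, whose leading term is $T'_{w'}(\Psi_{i,q^r})$ by hypothesis. The rank-one computation in \cite{fh22} shows that $\Theta_j$ acts on $\Psi_{j,q^p}$ as
\[
\Psi_{j,q^{p-2r_j}}^{-1}\Delta_j(p)\times(\text{series in } A_{j,\ast}^{-1}),
\]
where the series has constant term $1$, and that $\Theta_j$ fixes $\Psi_{k,q^p}$ for $k\neq j$ up to similar corrections. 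Because leading terms are multiplicative for the partial order, taking leading terms of $\Theta_j(\underline{Q}_{w'(\omega_i),q^r})$ then gives $T_j'(T'_{w'}(\Psi_{i,q^r}))$. We also need the leading monomial to survive, i.e. no cancellation occurs. This follows from the expansion being of the form (leading monomial)$\times(1+\text{lower terms})$.

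The main obstacle is the second point: the partial order must be compatible with $\Theta_j$. For $\ell(s_jw')>\ell(w')$ the relevant completion should be the one in which $\Theta_j$ only introduces terms lower than the image of the leading term. This is exactly where the hypothesis on lengths enters, via $w'^{-1}(\alpha_j)>0$. I would verify it by checking that the weight of $T'_{w'}(\Psi_{i,q^r})$ pairs nonnegatively with $\alpha_j^\vee$. A further bookkeeping issue is the conventions: footnotes in the paper replace $q$ by $q^{-1}$ relative to \cite{fh23}, so one must track that the spectral shifts $r-2r_i$ in Definition \ref{braid action} match the renormalization of \cite[Remark 5.12]{ghl}. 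The $P\otimes\QQ$ factor can be ignored, as noted in the footnote.
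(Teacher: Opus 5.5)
The paper gives no argument here. It imports the statement from \cite[Proposition 3.6]{fh23}, in the normalization of \cite[Remark 5.12]{ghl}; that is the purpose of the footnote to Definition \ref{braid action}. Your skeleton is the natural one, and you correctly sense where $w'^{-1}(\alpha_j)>0$ should enter. However, the step that carries all the content is not proved.

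\textbf{The key step is circular.} Multiplicativity of leading terms applies to products. It does not apply to the image of an infinite sum under a map that fails to respect the order, and $\Theta_j$ fails to respect it: $T'_j(A_{j,q^s}^{-1})=A_{j,q^{s-2r_j}}$, while for $k\neq j$ the monomial $T'_j(A_{k,q^s}^{-1})$ is $A_{k,q^s}^{-1}$ times a product of $A_{j,\ast}^{-1}$'s. So for a lower monomial $N$ of $\underline{Q}_{w'(\omega_i),q^r}$, whether $T'_j(N)$ is still lower depends on cancellations between these factors, spectral parameter by spectral parameter. Your remark that the expansion has the form (leading monomial)$\times(1+\text{lower terms})$ presupposes exactly this, so the no-cancellation claim is circular.

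What you need is an inductive description of \emph{all} monomials of $\underline{Q}_{w'(\omega_i),q^r}$, for instance:
\begin{enumerate}
\item each ratio to the leading monomial is a product of factors $T'_{s_{i_1}\cdots s_{i_{m-1}}}(A^{-1}_{i_m,\ast})$ along a reduced word of $w'$, whose classical weights are $-\beta$ with $\beta>0$ and $w'^{-1}(\beta)<0$ (so $\beta\neq\alpha_j$ and $s_j(\beta)>0$);
\item the quantum affine refinement that $T'_j$ of such a factor is an honest product of $A^{-1}$'s, since the classical weight alone does not decide the order;
\item convergence of the resulting sum.
\end{enumerate}
The check you propose, $\langle\mathrm{wt}(T'_{w'}(\Psi_{i,q^r})),\alpha_j^\vee\rangle\geq 0$, concerns only the leading monomial and gives none of this.

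\textbf{The change of conventions is not bookkeeping.} Replacing $q$ by $q^{-1}$ is realized by $\Psi_{i,q^r}\mapsto\Psi_{i,q^{-r}}$. This sends $A_{i,q^s}$ to $A_{i,q^{-s}}^{-1}$ and so reverses the partial order that defines ``leading term''. Whether the shift is $r-2r_i$ or $r+2r_i$ is therefore tied to the completion in which the $\underline{Q}$'s are expanded. Already for $\mathfrak{sl}_2$, ignoring the $P\otimes\QQ$ factors, the relation $\underline{Q}_{s(\omega),q^r}\Psi_{q^{r-2}}-\underline{Q}_{s(\omega),q^{r-2}}\Psi_{q^r}=1$ has two formal solutions:
\[
\Psi_{q^{r-2}}^{-1}\bigl(1+A_{q^{r-2}}^{-1}+A_{q^{r-2}}^{-1}A_{q^{r-4}}^{-1}+\cdots\bigr)
\quad\text{and}\quad
-\Psi_{q^{r+2}}^{-1}\bigl(1+A_{q^{r+2}}+A_{q^{r+2}}A_{q^{r+4}}+\cdots\bigr).
\]
The first lies in $\widetilde{\mathcal{Y}'_e}$ with leading term $T'_1(\Psi_{q^r})=\Psi_{q^{r-2}}^{-1}$. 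The second has leading term $-\Psi_{q^{r+2}}^{-1}$ for the opposite order, which is, up to sign, the image of $\Psi_{q^r}$ under the operator of \cite{fh23}.

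This leaves you with two options, and neither closes the argument as written:
\begin{itemize}
\item If your $\Theta_j$ are the Frenkel--Hernandez operators on the completions indexed by $W$, you still have to identify $\Theta_w(\underline{Q}_{i,q^r})$ with the element of $\widetilde{\mathcal{Y}'_e}$ called $\underline{Q}_{w(\omega_i),q^r}$ here, and compare leading terms for the two orders.
\item If you mean operators with $A_{j,\ast}^{-1}$-corrections on $\widetilde{\mathcal{Y}'_e}$, as you write, they are not endomorphisms of that completion, because they push $A_{j,\ast}^{-1}$ upwards. Then $\Theta_w$ is only defined on the relevant elements once the missing lemma above is proved.
\end{itemize}
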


In \cite{pag25} we constructed a quantization of the cluster algebra $\mathcal{A}$ of \cite{ghl}. Let $t$ be the deformation parameter as in the previous section and let $\mathcal{A}_t$ be such quantum cluster algebras. Since to every cluster variable of $\mathcal{A}$ corresponds a quantum cluster variable in $\mathcal{A}_t$, we have well-defined quantum $\underline{Q}$-variables in $\mathcal{A}_t$. By abuse of notation, we denote these quantum $\underline{Q}$-variables by $\underline{Q}_{w(\omega_i),q^r}$, as the classical ones. Let $\mathcal{T}$ be the quantum torus in which $\mathcal{A}_t$ lives. In \cite[Theorem 4.15]{pag25} we proved that the quantum tori $\mathcal{T}$ is a subring of $\mathcal{Y}'_t$. The same Theorem implies that in order to compute the $t$-relations between quantum $\underline{Q}$-variables in the same seed, we can consider the corresponding non-quantum $\underline{Q}$-variables and compute the $t$-relations between their leading terms, considered as commutative monomials in $\mathcal{Y}'_t$. Concretely, thanks to Proposition \ref{hw QQ}, for every $w,v\in W$, $ (i,r),(j,s)\in V$ such that $\underline{Q}_{w(\omega_i),q^r},\ \underline{Q}_{v(\omega_j),q^s}$ are in the same seed, if we want to compute $\alpha$ such that 
\[\underline{Q}_{w(\omega_i),q^r}\star\underline{Q}_{v(\omega_j),q^s}=t^{\alpha}\underline{Q}_{v(\omega_j),q^s}\star\underline{Q}_{w(\omega_i),q^r}\]
we can compute equivalently:
\[\alpha=\Omega'(\Psi_{w(\omega_i),q^r},\Psi_{v(\omega_j),q^s})=\Omega'(T_{t,w}'(\Psi_{i,q^r}),T'_{t,v}(\Psi_{j,q^s})).
\]
Thus, by abuse of notation, let us denote by $\Omega'(\underline{Q}_{w(\omega_i),q^r}\underline{Q}_{v(\omega_j),q^s})$ the power of $t$ that gives the $t$-commutation of two $\underline{Q}$-variables in the same seed.

In \cite[Section 5]{pag25} we explained that we could obtain a quantized version of the $QQ$-systems \eqref{classic QQ} as quantum exchange relations in $\mathcal{A}_t$. However, in our previous work we computed explicitly the quantum $QQ$-systems only for type $A_1$. In general case, we proved the existence, without computing them, of integers $A,B$ such that
\begin{equation}\label{AB qQQ}
  \underline{Q}_{w(\omega_i),q^{r-2}}\star \underline{Q}_{ws_i(\omega_i),q^r}= t^A\underline{Q}_{ws_i(\omega_i),q^{r-2}}\star\underline{Q}_{w(\omega_i),q^r} +t^B\underset{\{j\in I\mid C_{ij}=-1\}}{\overset{\rightarrow}{\star}} \underline{Q}_{w(\omega_j),q^{r-1}}
\end{equation} 
which is an equation in $\mathcal{A}_t$. The meaning of this relation is that $\underline{Q}_{w(\omega_i),q^{r-2}}$ is obtained from $\underline{Q}_{ws_i(\omega_i),q^r}$ by mutation. 
\begin{proposition}
    Let us fix $w\in W$ and $r\in \ZZ$. Then, every $\underline{Q}$-variable in the set $\Bigl\{\underline{Q}_{w(\omega_j),q^{r-1}}\Bigr\}_{\{j\in I,C_{ij}=-1\}}$ commute with each other. In particular, we can denote
    \[\underset{\{j\in I\mid C_{ij}=-1\}}{\overset{\rightarrow}{\star}} \underline{Q}_{w(\omega_j),q^{r-1}}=\prod_{\{j\in I\mid C_{ij}=-1\}} \underline{Q}_{w(\omega_j),q^{r-1}},\] for any choice of the ordered product.
\end{proposition}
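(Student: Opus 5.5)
The plan is to reduce the commutation of the quantum $\underline{Q}$-variables to a computation with their leading terms, and then use the braid invariance to pull everything back to the fundamental monomials $\Psi_{j,q^{r-1}}$.

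First, the variables $\underline{Q}_{w(\omega_j),q^{r-1}}$, for $j$ with $C_{ij}=-1$, all lie in a common seed of $\mathcal{A}_t$. Indeed, they appear together as the monomial on the right-hand side of the exchange relation \eqref{AB qQQ}, and the factors of such a monomial belong to the seed in which the mutation happens. So, by the discussion following Proposition \ref{hw QQ} (based on \cite[Theorem 4.15]{pag25}), the exponent $\alpha$ in
\[\underline{Q}_{w(\omega_j),q^{r-1}}\star\underline{Q}_{w(\omega_k),q^{r-1}}=t^{\alpha}\,\underline{Q}_{w(\omega_k),q^{r-1}}\star\underline{Q}_{w(\omega_j),q^{r-1}}\]
is given by
\[\alpha=\Omega'\bigl(T'_w(\Psi_{j,q^{r-1}}),T'_w(\Psi_{k,q^{r-1}})\bigr).\]

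Next, Theorem \ref{main thm} applied with the same $w$ on both sides gives
\[\alpha=\Omega'(\Psi_{j,q^{r-1}},\Psi_{k,q^{r-1}}).\]
By Remark \ref{F recovers bittmann}, for $m=0$ this equals
\[-\sum_{\ell\geq1}\tilde{c}_{jk}(1-2\ell),\]
since we are in simply-laced type and $r_k=1$. All the arguments $1-2\ell$ are $\leq -1<r_j=1$, so every term vanishes by Proposition \ref{prop tecnica}(\ref{1}). Hence $\alpha=0$ and the variables pairwise commute, which also covers the case $j=k$.

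Once pairwise commutativity is established, the ordered $\star$-product does not depend on the order. It remains to identify it with the commutative monomial. In the normalization defining commutative monomials, the correcting power of $t$ is $\tfrac12\sum n\,n'\,\Omega'(\cdot,\cdot)$, and this is zero here because all the relevant pairings vanish. Hence the $\star$-product coincides with $\prod_j \underline{Q}_{w(\omega_j),q^{r-1}}$.

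The main obstacle is the first step: justifying that these variables indeed sit in a common seed, so that the leading-term computation of $t$-commutation applies. I would extract this from the seed structure in \cite{ghl,pag25}, where the exchange relation \eqref{AB qQQ} is realized as a mutation whose right-hand monomial consists of cluster variables of the current seed.
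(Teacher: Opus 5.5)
Your proposal is correct and follows essentially the same route as the paper: you express the $t$-commutation exponent as $\Omega'$ of the leading terms $T'_w(\Psi_{j,q^{r-1}})$, remove $T'_w$ by braid invariance (the paper cites Theorem \ref{teo q tori}, you cite Theorem \ref{main thm} directly, which is the same input), and conclude that $\Omega'(\Psi_{j,q^{r-1}},\Psi_{k,q^{r-1}})=0$ at equal spectral parameters via Proposition \ref{prop tecnica}. Your explicit argument that these variables lie in a common seed, and your check that the ordered product agrees with the commutative monomial, make precise what the paper leaves to ``the discussion above''.
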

\begin{proof}
    It suffices to prove that for any $h,k\in I$ such that $C_{ih}=C_{ik}=-1$ the elements $\underline{Q}_{w(\omega_h),q^{r-1}},\underline{Q}_{w(\omega_k),q^{r-1}}$ commute. By the discussion above, we have
    \[\Omega'\left(\underline{Q}_{w(\omega_h),q^{r-1}},\underline{Q}_{w(\omega_k),q^{r-1}}\right)=\Omega'\left(T_{t,w}(\Psi_{h,q^{r-1}}),T_{t,w}(\Psi_{k,q^{r-1}})\right).\] Then, by Theorem \ref{teo q tori} we can further simplify the expression and get
    \[\Omega'\left(\underline{Q}_{w(\omega_h),q^{r-1}},\underline{Q}_{w(\omega_k),q^{r-1}}\right)=\Omega'(\Psi_{h,q^{r-1}},\Psi_{k,q^{r-1}}),\] but this is zero by Definition \ref{def F} and Proposition \ref{prop tecnica}, since the spectral parameters that appear are the same.
\end{proof}
As a consequence, we can rewrite \eqref{AB qQQ} as:
\begin{equation}\label{ABqq2}
    \underline{Q}_{w(\omega_i),q^{r-2}}\star \underline{Q}_{ws_i(\omega_i),q^r}= t^A\underline{Q}_{ws_i(\omega_i),q^{r-2}}\star\underline{Q}_{w(\omega_i),q^r} +t^B\prod_{\{j\in I\mid C_{ij}=-1\}} \underline{Q}_{w(\omega_j),q^{r-1}} .
\end{equation}
In what follows we apply the same argument to compute $A,B$.
Since \eqref{ABqq2} is a quantum exchange relation, from the theory of quantum cluster algebras (and toric frames), $A,B$ are computed as follows:
\begin{align*}
    A&=-\frac{1}{2}\Omega'\Big(\underline{Q}_{w(\omega_i),q^{r-2}}^{-1},\underline{Q}_{ws_i(\omega_i),q^{r-2}}\Big)-\frac{1}{2}\Omega'\Big(\underline{Q}_{w(\omega_i),q^{r-2}}^{-1},\underline{Q}_{w(\omega_i),q^r}\Big)-\frac{1}{2}\Omega'\Big(\underline{Q}_{ws_i(\omega_i),q^{r-2}},\underline{Q}_{w(\omega_i),q^r}\Big),\\
    B&=-\frac{1}{2}\Omega'\Big(\underline{Q}_{w(\omega_i),q^{r-2}}^{-1},\prod_{\{j\in I\mid C_{ij}=-1\}}\underline{Q}_{w(\omega_j),q^{r-1}}\Big).
\end{align*}
Recalling Proposition \ref{hw QQ} and that we are regarding now the leading terms as commutative monomials in $\mathcal{Y}'_t$, we can rewrite
\begin{align*}
    A&=\frac{1}{2}\Omega'(T_{t,w}'(\Psi_{i,q^{r-2}}),T_{t,w}'T_{t,i}'(\Psi_{i,q^{r-2}}))+\frac{1}{2}\Omega'(T_{t,w}'(\Psi_{i,q^{r-2}}),T_{t,w}'(\Psi_{i,q^r}))-\frac{1}{2}\Omega'(T_{t,w}'T_{t,i}'(\Psi_{i,q^{r-2}}),T_{t,w}'(\Psi_{i,q^r})),\\
    B&=\frac{1}{2}\Omega'\Big(T_{t,w}'(\Psi_{i,q^{r-2}}),\prod_{\{j\in I\mid C_{ij}=-1\}}T_{t,w}'(\Psi_{j,q^{r-1}})\Big).
\end{align*}

By Theorem \ref{teo q tori} we can cancel out the operators $T_{t,w}'$, and we get
\begin{align*}
    A&=\frac{1}{2}\Omega'(\Psi_{i,q^{r-2}},T_i'(\Psi_{i,q^{r-2}}))+\frac{1}{2}\Omega'(\Psi_{i,q^{r-2}},\Psi_{i,q^r})-\frac{1}{2}\Omega'(T_i'(\Psi_{i,q^{r-2}}),\Psi_{i,q^r}),\\
    B&=\frac{1}{2}\Omega'\Big(\Psi_{i,q^{r-2}}
    ,\prod_{\{j\in I\mid C_{ij}=-1\}}\Psi_{j,q^{r-1}}\Big).
\end{align*}
Since now we are working in simply-laced setting, the braid operator acts in a simpler way than in general:

\begin{equation*}
    A=\frac{1}{2}\Omega'\left(\Psi_{i,q^{r-2}},\Psi_{i,q^{r-4}}^{-1}\prod_{\{j\in I\mid C_{ij}=-1\}}\Psi_{k,q^{r-3}}\right)+\frac{1}{2}\Omega'\Big(\Psi_{1,q^{r-2}},\Psi_{i,q^r}\Big)-\frac{1}{2}\Omega'\left(\Psi_{i,q^{r-4}}^{-1}\prod_{\{j\in I\mid C_{ij}=-1\}}\Psi_{k,q^{r-3}},\Psi_{i,q^r}\right).
\end{equation*}

Recalling Definition \ref{def F}, we have
\begin{equation*}
    \begin{split}
        A = \frac{1}{2} \Big[ & -\Omega'(\Psi_{i,q^{r-2}}, \Psi_{i,q^{r-4}}) + \Omega'(\Psi_{i,q^{r-2}}, \Psi_{i,q^r}) + \Omega'(\Psi_{i,q^{r-4}}, \Psi_{i,q^r}) \\
& + \sum_{\{k\in I\mid C_{ik}=-1\}} \left( \Omega'(\Psi_{i,q^{r-2}}, \Psi_{k,q^{r-3}}) - \Omega'(\Psi_{k,q^{r-3}}, \Psi_{i,q^r}) \right) \Big]
    \end{split}
\end{equation*}

Using the properties of $\Omega'$ and those of the quantum Cartan matrix from Section \ref{sec quantum cartan}, we finally get
\begin{equation}\label{final A}
    A=-\frac{3}{2}-\frac{\tilde{c}_{ii}(3)}{2}+\sum_{\{k\in I\mid C_{ik}=-1\}}\frac{\tilde{c}_{ki}(2)}{2}.
\end{equation}
Similarly for $B$:
\begin{equation}
    B=\frac{1}{2}\sum_{\{j\in I\mid C_{ij}=-1\}}\Omega'(\Psi_{i,q^{r-2}},\Psi_{j,q^{r-1}})=0.
\end{equation}
These considerations yeld the following theorem.
\begin{theorem}\label{preciseqQQ}
    For $(i,r)\in V$ and $w\in W$ such that $ws_i>w$, the following identity holds in $\mathcal{A}_t$:
    \begin{equation}
        \underline{Q}_{w(\omega_i),q^{r-2}}\star\underline{Q}_{ws_i(\omega_i),q^r}= t^A\underline{Q}_{ws_i(\omega_i),q^{r-2}}\star\underline{Q}_{w(\omega_i),q^r} +\prod_{\{j\in\mid C_{ij}=-1\}}\underline{Q}_{w(\omega_j),q^{r-1}},
    \end{equation}
    where $A$ is determined by \eqref{final A}.
\end{theorem}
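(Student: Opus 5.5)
The starting point is the existence result from \cite{pag25}, recalled in \eqref{AB qQQ}: there are integers $A,B$ such that the relation holds in $\mathcal{A}_t$, and by the Proposition on commuting $\underline{Q}_{w(\omega_j),q^{r-1}}$ it takes the form \eqref{ABqq2}. The theorem then reduces to two numerical claims: $B=0$, and $A$ equals the value in \eqref{final A}. For both I will use the standard toric-frame formulas for the powers of $t$ in a quantum exchange relation, in terms of $\Omega'$ evaluated on $\underline{Q}$-variables of a common seed. By \cite[Theorem 4.15]{pag25} and Proposition \ref{hw QQ}, these values may be computed on the leading terms $T'_w(\Psi_{i,q^p})$, regarded as commutative monomials in $\mathcal{Y}'_t$.

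The key step is to remove $w$. By Theorem \ref{main thm}, equivalently Theorem \ref{teo q tori}, $\Omega'(T'_w M,T'_w N)=\Omega'(M,N)$. Every term in the formulas for $A$ and $B$ has the form $\Omega'(T'_w M,T'_w N)$ with $M,N\in\{\Psi_{i,q^{r-2}},\Psi_{i,q^r},T'_i(\Psi_{i,q^{r-2}}),\Psi_{j,q^{r-1}}\}$, using $T'_{ws_i}=T'_wT'_i$ since $ws_i>w$. So all dependence on $w$ disappears, and we are left with rank-free computations involving only $T'_i$. In simply-laced type this means $T'_i(\Psi_{i,q^{r-2}})=\Psi_{i,q^{r-4}}^{-1}\prod_{C_{ik}=-1}\Psi_{k,q^{r-3}}$.

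For $B$, I expand bilinearly to $\frac12\sum_j\Omega'(\Psi_{i,q^{r-2}},\Psi_{j,q^{r-1}})$ with $m=1$. Definition \ref{def F} gives $-\sum_{\ell\ge1}\tilde c_{ij}(2-2\ell)$, which vanishes by Proposition \ref{prop tecnica}(\ref{1}), since every argument is $\le0<r_i$.

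For $A$, I expand the three terms into $\Omega'$ of pairs of generators with small shifts $m\in\{2,4\}$ for $(i,i)$ and $m\in\{\pm1,3\}$ for $(i,k)$ and $(k,i)$. For each, I evaluate the finite sum $-\sum_\ell\tilde c(m+1-2\ell)$ (with $m\geq0$ after using skew-symmetry, Proposition \ref{F antisimmetrica}), using $\tilde c_{ii}(1)=1$, $\tilde c_{ki}(1)=0$, and the vanishing below $1$. This yields the three $(i,i)$ terms $-\tilde c_{ii}(1)$, $-\tilde c_{ii}(1)$, and $-(\tilde c_{ii}(3)+\tilde c_{ii}(1))$, combined with signs. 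It also gives the $(i,k)$ terms $-\tilde c_{ik}(2)$ and a zero term.

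The main obstacle is bookkeeping: getting the signs and which index pair appears ($\tilde c_{ik}$ versus $\tilde c_{ki}$). I expect to need Proposition \ref{prop tecnica 2}(\ref{4}) (symmetry $\tilde c_{ik}=\tilde c_{ki}$ in simply-laced type) to match \eqref{final A}. I would also cross-check the result with Lemma \ref{relation quantum cartan} at $u=2$ for $j=i$, which gives $\tilde c_{ii}(3)=\sum_k\tilde c_{ki}(2)-\tilde c_{ii}(1)$. That check also shows the formula can be rewritten, and it serves to confirm against type $A_1$ from \cite{pag25}.
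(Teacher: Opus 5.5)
Your proposal is correct and follows the paper's own argument. You start from the existence of $A,B$ in \eqref{ABqq2}, express them by the toric-frame formulas evaluated on the leading terms $T'_w(\Psi_{i,q^p})$, remove $T'_w$ with the invariance Theorem \ref{main thm} (using $T'_{ws_i}=T'_wT'_i$), and evaluate the remaining values of $\Omega'$ with Proposition \ref{prop tecnica} to get $B=0$ and \eqref{final A}. Your cross-check via Lemma \ref{relation quantum cartan} (with $j=i$, $u=2$) goes beyond the paper: it collapses \eqref{final A} to $A=-1$ in every simply-laced type. Note that this disagrees with the exponent $t^{-2}$ in the paper's $A_2$ example, where $\tilde{c}_{ii}(3)=0$ and $\tilde{c}_{ki}(2)=1$ give $A=-1$.
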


\begin{example}
    We can write all the quantum $QQ$-systems for type $A_2$. Then, for every $w$ in $W$, $(i,r)$ in $V$, $I=\{i,j\}$, we have
    \begin{equation}
        \underline{Q}_{w(\omega_i),q^{r-2}}\star\underline{Q}_{ws_i(\omega_i),q^r}=t^{-2}\underline{Q}_{ws_i(\omega_i),q^{r-2}}\star\underline{Q}_{w(\omega_i),q^r}+\underline{Q}_{w(\omega_j),q^{r-1}}.
    \end{equation}
    Where the values of $\tilde{c}_{ij}(u)$ for every $u$ in $\ZZ$ can be found in \cite[Example 3.2.3]{b21}.
\end{example}
\section{Compatibility with cluster algebras}\label{sec q cluster}
In this Section we present an implication of our definition of the bicharacter $\Omega'$ in the context of monoidal categorification  of cluster algebras. Namely, using the  skew-symmetric rule defining $\Omega'$, we define an infinite skew-symmetric matrix $\Lambda$ and we prove (Theorem \ref{thm compatibility}) that it is compatible (in the quantum cluster algebra setting) with the exchange matrix $B$ of the infinite basic quiver $\Gamma$ defined in \cite{hl16o}, for every finite Dynkin type. This means that we provide a quantization of the cluster algebra categorified by the category $\mathcal{O}^{\mathfrak{b},+}$  of representations of the quantum affine Borel subalgebra $\mathcal{U}_q(\hat{\mathfrak{b}})$ of the quantum affine algebra $\mathcal{U}_q(\hat{\mathfrak{g}})$, for underlying Lie algebra $\mathfrak{g}$ of every Dynkin type. Our result is a generalization of the result in \cite{b21}, which concerns simply-laced types.

To every Dynkin diagram of finite type we can associate an infinite periodic quiver $\Gamma$, as in \cite{hl16o}. We first consider the quiver $\tilde{\Gamma}$ having vertex set $V=I\times \ZZ$ and arrows given by
\begin{equation}
    (i,r)\to(j,s)\Leftrightarrow (C_{ij}\neq 0\ \text{and}\ s=r+r_iC_{ij}).
\end{equation}
The quiver $\tilde{\Gamma}$ has two connected components; we pick one of them and call it $\Gamma$ (see for example Figure \ref{fig:quiver}). 
\begin{figure}[]
    \centering
    $\begin{tikzcd}
        \vdots & \vdots && \\
        & {(2,5)} & \vdots \\
        && {(1,4)} \\
        & {(2,3)} && \vdots \\
        &&& {(1,2)} \\
        & {(2,1)} \\
        {(1,0)} \\
        & {(2,-1)} \\
        && {(1,2)} \\
        & {(2,-3)} \\
        &&& {(1,-4)} \\
        & {(2,-5)} && \vdots \\
        {(1,-6)} && {(1,-4)} \\
        \vdots & {(2,-7)} & \vdots \\
        & \vdots
        \arrow[no head, from=2-2, to=1-2]
        \arrow[from=2-2, to=5-4]
        \arrow[no head, from=3-3, to=2-3]
        \arrow[from=3-3, to=6-2]
        \arrow[from=4-2, to=2-2]
        \arrow[from=4-2, to=7-1]
        \arrow[no head, from=5-4, to=4-4]
        \arrow[from=5-4, to=8-2]
        \arrow[from=6-2, to=4-2]
        \arrow[from=6-2, to=9-3]
        \arrow[no head, from=7-1, to=1-1]
        \arrow[from=7-1, to=10-2]
        \arrow[from=8-2, to=6-2]
        \arrow[from=8-2, to=11-4]
        \arrow[from=9-3, to=3-3]
        \arrow[from=9-3, to=12-2]
        \arrow[from=10-2, to=8-2]
        \arrow[from=10-2, to=13-1]
        \arrow[from=11-4, to=5-4]
        \arrow[from=11-4, to=14-2]
        \arrow[from=12-2, to=10-2]
        \arrow[from=12-4, to=11-4]
        \arrow[from=13-1, to=7-1]
        \arrow[from=13-3, to=9-3]
        \arrow[from=14-1, to=13-1]
        \arrow[from=14-2, to=12-2]
        \arrow[from=14-3, to=13-3]
        \arrow[from=15-2, to=14-2]
    \end{tikzcd}$
    \caption{The quiver $\Gamma$ in type $G_2$.}
    \label{fig:quiver}
\end{figure}
Note that between two vertices of $\Gamma$ there is at most one arrow. We denote by $V'$ the vertex set of $\Gamma$. Now let us call $B=(b_{(i,r)(j,s)})_{(i,r)(j,s)\in V'}$ the exchange matrix of $\Gamma$, in the sense of cluster algebra. This means that the entries of $B$ are $b_{(i,r)(j,s)}=1$(resp. $-1$) if there is an arrow in $\Gamma$ from $(i,r)$ to $(j,s)$ (resp. from $(j,s)$ to $(i,r)$). Let $\mathcal{A}'$ be the cluster algebra having an initial seed with quiver $\Gamma$ (note that this algebra is different from the cluster algebra $\mathcal{A}$ discussed in Section \ref{section QQ}). We now construct a quantization of this cluster algebra. Namely, we define a skew-symmetric integer $V'\times V'$-matrix $\Lambda$, that verifies the following compatibility condition: the matrix product $B\Lambda$ is a diagonal matrix and its diagonal entries are all strictly positive integers. In this case, we say that $(B,\Lambda)$ is a compatible pair. Note that $B\Lambda$ is well defined since $B$ is locally finite, i.e. every row and column of $B$ has only finitely-many non-zero entries.
\begin{definition} 
    Define $\Lambda=(\lambda_{(i,r)(j,s)})_{(i,r),(j,s)\in V'}$ to be the matrix defined by
    \[\lambda_{(i,r)(j,s)}:=\Omega'(\Psi_{i,q^r},\Psi_{j,q^s}).\] 
\end{definition}

 By Proposition \ref{F antisimmetrica}, we have that $\Lambda$ is skew-symmetric.

In \cite[Proposition 6.2.4]{b21} the author proved that for simply-laced type Dynkin diagrams, the pair $(B,\Lambda)$ is compatible (in that paper, the bicharacter is defined only for simply-laced types). We extend this result to non simply-laced types, with the general definition of $\Omega'$ that we have given in Definition \ref{def F} (this is an instance that our definition of $\Omega'$ is the "good" one). 

\begin{theorem}\label{thm compatibility}
    The pair $(B,\Lambda)$ is compatible. Precisely, we have
    \[B\Lambda=2\mathrm{Id}_{V'}.\]
\end{theorem}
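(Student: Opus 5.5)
The plan is to compute the entry $(B\Lambda)_{(i,r),(j,s)}=\sum_{(k,p)\in V'}b_{(i,r)(k,p)}\,\Omega'(\Psi_{k,q^p},\Psi_{j,q^s})$ directly. I would repackage the sums defining $\Omega'$ into generating functions, so that the action of $B$ turns into multiplication by a row of the quantum Cartan matrix. The identity $C(z)\tilde C(z)=\mathrm{Id}$ (equivalently Lemma \ref{relation quantum cartan}) then collapses everything to $2\delta_{(i,r),(j,s)}$.

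\textbf{Step 1 (neighbours of a vertex).} From the definition of $\tilde\Gamma$, the vertex $(i,r)$ has an outgoing arrow to $(i,r+2r_i)$ and an incoming arrow from $(i,r-2r_i)$. For every $k\neq i$ with $C_{ik}<0$, set $d_{ik}:=-r_iC_{ik}=-r_kC_{ki}$, using the symmetry of $DC$. Then there is an arrow $(i,r)\to(k,r-d_{ik})$ and an arrow $(k,r+d_{ik})\to(i,r)$, and there are no other arrows at $(i,r)$. Since $\Omega'(\Psi_{k,q^p},\Psi_{j,q^s})$ depends only on $s-p$ (Remark \ref{F recovers bittmann}), the entry becomes
\[
F(\mathrm{i},m+2r_i)-F(\mathrm{i},m-2r_i)+\sum_{k:\,C_{ik}<0}\big(F(k,m-d_{ik})-F(k,m+d_{ik})\big)\Big|\ldots
\]
More precisely, the $(i,r),(j,s)$ entry is a signed sum of values $F_{kj}(m')$ with $F_{kj}(m):=\Omega'(\Psi_{k,q^0},\Psi_{j,q^m})$ and $m=s-r$, the arguments being $m'=m\mp 2r_i$ and $m'=m\pm d_{ik}$.

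\textbf{Step 2 (generating functions).} Write $G_{kj}(z):=\sum_m F_{kj}(m)z^m$. The definition gives
\[
G_{kj}(z)=-\tilde C_{kj}(z)\,\frac{z^{r_j}}{1-z^{2r_j}}\Big|_{\text{expanded at }0}+\big(\text{same with }z\mapsto z^{-1}\big),
\]
where the second summand is the expansion at $\infty$ coming from the $-m$ terms. These are two-sided series, but each coefficient only involves finitely many terms by Proposition \ref{prop tecnica}, so I will work with the two halves separately.

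Shifting the first index's spectral parameter multiplies by a power of $z$. The row of $B$ therefore acts by multiplication by
\[
(z^{-2r_i}-z^{2r_i})\tilde C_{ij}+\sum_{k}(z^{d_{ik}}-z^{-d_{ik}})\tilde C_{kj}.
\]
The key identity to check is $z^{d_{ik}}-z^{-d_{ik}}=(z-z^{-1})[r_i]_z[C_{ik}]_z\cdot(-1)$ and $z^{2r_i}-z^{-2r_i}=(z-z^{-1})[r_i]_z(z^{r_i}+z^{-r_i})$. This uses Remark \ref{remark sugli r_i}: $r_i=1$ whenever $C_{ik}\le-2$, so that $[r_iC_{ik}]_z=[r_i]_z[C_{ik}]_z$ in all cases. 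With it, the multiplier equals $-(z-z^{-1})[r_i]_z\sum_kC_{ik}(z)\tilde C_{kj}(z)=-(z-z^{-1})[r_i]_z\,\delta_{ij}$.

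\textbf{Step 3 (the diagonal).} For $i\neq j$ both halves vanish coefficientwise, since Lemma \ref{relation quantum cartan} applies for all $u$. For $i=j$ the surviving half is
\[
(z^{r_i}-z^{-r_i})\frac{z^{r_i}}{1-z^{2r_i}}=-1.
\]
This contributes $1$ at $m=0$, and the $z\mapsto z^{-1}$ half contributes another $1$ at $m=0$, which gives $2\delta_{(i,r),(j,s)}$. The exception $u=0$, $i=j$ in Lemma \ref{relation quantum cartan} is exactly where this comes from.

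The main obstacle is Step 3's bookkeeping. The two halves are expansions in opposite directions, the truncation $\ell\ge1$ acts on only one side, and the $2r_j$-step partial sums only telescope cleanly against shifts that are multiples of $r_j$. I would therefore check carefully, term by term using Lemma \ref{relation quantum cartan} at $u=m+r_i+r_j(1-2\ell)$ as in the proof of Theorem \ref{main thm}, that boundary terms in $\ell$ cancel except at $m=0$. In the non-simply-laced case I would verify the $C_{ik}=-2,-3$ shifts by hand on the $B_2$ and $G_2$ values.
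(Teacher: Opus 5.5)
Your route is correct and genuinely different from the paper's. The paper works entry by entry and splits into the cases $(r_i,r_j)\in\{(1,1),(1,2),(2,1),(2,2)\}$, plus a separate $G_2$ computation. It telescopes the $\ell$-sums when the shifts are compatible with the $2r_j$-step. Otherwise it applies, for each $\ell$, a differenced form of Lemma \ref{relation quantum cartan} such as \eqref{caso 2 2} or \eqref{g5}, and the $S(-m)$ half is left to ``similarly''.

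You instead reduce everything to $C(z)\tilde C(z)=\mathrm{Id}$ through the factorizations $z^{r_iC_{ik}}-z^{-r_iC_{ik}}=(z^{r_i}-z^{-r_i})C_{ik}(z)$ for $k\neq i$ and $z^{2r_i}-z^{-2r_i}=(z^{r_i}-z^{-r_i})C_{ii}(z)$. The first is exactly where Remark \ref{remark sugli r_i} enters. This gains you uniformity in the Dynkin type and removes the $G_2$ special case. It also shows where the $2$ comes from: one $\delta_{ij}$ from the expansion at $0$ and one from the expansion at $\infty$.

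The obstacle you anticipate at the end is not real, and no term-by-term fallback is needed. The factor $z^{r_j}/(1-z^{2r_j})$ depends only on the fixed column index $j$, so it passes through $\sum_k C_{ik}(z)\,\cdot$ untouched. Multiplying two-sided series by Laurent polynomials commutes with the expansions $\iota_0,\iota_\infty$ at $0$ and $\infty$, which are ring homomorphisms out of $\QQ(z)$. So the mismatch between $2r_j$-steps and the shifts $d_{ik}$, which forces the paper's case split, never arises.

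What you must fix are the signs, and here they decide between $2$ and $0$. Since $\lambda_{(i,r+2r_i)(j,s)}=F_{ij}(m-2r_i)$ (not $F_{ij}(m+2r_i)$), the correct identity is
\[
\sum_{m\in\ZZ}(B\Lambda)_{(i,r)(j,r+m)}z^m=(z^{2r_i}-z^{-2r_i})G_{ij}+\sum_{k\neq i}(z^{-d_{ik}}-z^{d_{ik}})G_{kj}=(z^{r_i}-z^{-r_i})\sum_{k}C_{ik}(z)G_{kj}.
\]
This is the negative of your multiplier, and your Step 3 then silently turns the computed $-1$ into a contribution of $+1$.

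More seriously, the second half of $G_{kj}$ must be the expansion at $\infty$ of the \emph{same} rational function $R_{kj}(z)=-\tilde C_{kj}(z)z^{r_j}/(1-z^{2r_j})$. That is, $G_{kj}=\iota_0(R_{kj})+\iota_\infty(R_{kj})$; checking this uses the $+$ sign in front of the second sum in Definition \ref{def F} together with $\tilde C(z^{-1})=\tilde C(z)$. Reading ``same with $z\mapsto z^{-1}$'' literally, i.e.\ substituting $z\mapsto z^{-1}$ into the first series, gives $-\iota_\infty(R_{kj})$, because $R_{kj}(z^{-1})=-R_{kj}(z)$. With that sign the two halves cancel on the diagonal and you would get $B\Lambda=0$.

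With the signs right, the generating series is $\iota_0(S_{ij})+\iota_\infty(S_{ij})$, where $S_{ij}=-\delta_{ij}(z^{r_i}-z^{-r_i})z^{r_i}/(1-z^{2r_i})=\delta_{ij}$. This gives exactly $2\delta_{(i,r),(j,s)}$.
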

\begin{proof}
    For every $(i,r),(j,s)\in V'$, we have
    \begin{align}
    \label{blambda}
        (B\Lambda)_{(i,r)(j,s)}&=\sum_{(k,t)\in V'}b_{(i,r)(k,t)}\lambda_{(k,t)(j,s)}=\\
        &=\lambda_{(i,r+2r_i)(j,s)}-\lambda_{(i,r-2r_i)(j,s)}+\sum_{\{k\in I\mid C_{ik}=-1\}}(\lambda_{(k,r-r_i)(j,s)}-\lambda_{(k,r+r_i)(j,s)})+\nonumber \\
         &\quad+\sum_{\{k\in I\mid C_{ik}=-2\}}(\lambda_{(k,r-2r_i)(j,s)}-\lambda_{(k,r+2r_i)(j,s)})+\sum_{\{k\in I\mid C_{ik}=-3\}}(\lambda_{(k,r-3r_i)(j,s)}-\lambda_{(k,r+3r_i)(j,s)}),\nonumber
    \end{align}
    where we have used the definition of $\Gamma$, or equivalently of $B$. We divide the proof according to the possible values of $r_i$ and $r_j$, but first we treat separately type $G_2$, which serves as an illustration of the general behavior of the sum in \eqref{blambda}.
    
    \textcolor{purple}{Compatibility in type $G_2$.}\\
    In this case the Cartan matrix is
    \[C(G_2)=\begin{pmatrix}
        2 & -1\\
        -3 & 2
    \end{pmatrix},\]
    hence $r_1=3$ and $r_2=1$. The relations that we obtain from Lemma \ref{relation quantum cartan} are the following:
    \begin{align}
        \tilde{c}_{12}(u+3)+\tilde{c}_{12}(u-3)&=\tilde{c}_{22}(u), \forall u\in\ZZ,\label{g1}\\
        \tilde{c}_{11}(u+3)+\tilde{c}_{11}(u-3)&=\tilde{c}_{21}(u),\forall u\in\ZZ\setminus\{0\},\label{g2}\\
        \tilde{c}_{21}(u+1)+\tilde{c}_{21}(u-1)&=\tilde{c}_{11}(u-2)+\tilde{c}_{11}(u)+\tilde{c}_{11}(u+2), \forall u\in\ZZ,\label{g3}\\
        \tilde{c}_{22}(u+1)+\tilde{c}_{22}(u-1)&=\tilde{c}_{12}(u-2)+\tilde{c}_{12}(u)+\tilde{c}_{12}(u+2), \forall u \in\ZZ\setminus\{0\}\label{g4}.
    \end{align}
     
    We start by computing
    \begin{equation}\label{g2 somma}
        (B\Lambda)_{(1,r)(2,s)}=\lambda_{(1,r+6)(2,s)}-\lambda_{(1,r-6)(2,s)}+\lambda_{(2,r-3)(2,s)}-\lambda_{(2,r+3)(2,s)}.
    \end{equation}
    Let us denote
    \[m=s-r,\] as we did in the previous sections.
    We observe the sum in \eqref{g2 somma} is telescopic and the remaining terms are
    \begin{align}\label{g2 somma 2}
        (B\Lambda)_{(1,r)(2,s)}=&\Big(\tilde{c}_{12}(m+5)+\tilde{c}_{12}(m+3)+\tilde{c}_{12}(m+1)+\tilde{c}_{12}(m-1)+\tilde{c}_{12}(m-3)+\tilde{c}(m-5)-\\&-\tilde{c}_{22}(m+2)-\tilde{c}_{22}(m)-\tilde{c}_{22}(m-2)\Big)+\Big(\tilde{c}_{12}(-m+5)+\tilde{c}_{12}(-m+3)+\tilde{c}_{12}(-m+1)+\nonumber\\&+\tilde{c}_{12}(-m-1)+\tilde{c}_{12}(-m-3)
        +\tilde{c}(-m-5)-\tilde{c}_{22}(-m+2)-\tilde{c}_{22}(-m)-\tilde{c}_{22}(-m-2)\Big)\nonumber.
    \end{align}

    We apply now \eqref{g1} to \eqref{g2 somma 2} (with $u=\pm m+2$, $u=\pm m$ and $u=\pm m-2$) and  we conclude that $(B\Lambda)_{(1,r)(2,s)}=0$ . Let us denote $p=r-s$ and consider now the entry
    \begin{align*}
        &(B\Lambda)_{(2,s)(1,r)}=\lambda_{(2,s+2)(1,r)}-\lambda_{(2,s-2)(1,r)}+\lambda_{(1,s-3)(1,r)}-\lambda_{(1,s+3)(1,r)}=\\
        &=\sum_{\ell\geq 1}\big(-\tilde{c}_{21}(p-2-6\ell+3)+\tilde{c}_{21}(p+2-6\ell+3)\big)+\sum_{\ell\geq 1}(-\tilde{c}_{11}(p-3-6\ell+3)+\tilde{c}_{11}(p+3-6\ell+3))+\\
        &+\sum_{\ell\geq 1}\big(-\tilde{c}_{21}(-p-2-6\ell+3)+\tilde{c}_{21}(-p+2-6\ell+3)\big)+\sum_{\ell\geq 1}\big(-\tilde{c}_{11}(-p-3-6\ell+3)+\tilde{c}_{11}(-p+3-6\ell+3)\big).
    \end{align*}
    We observe that if we rewrite \eqref{g3} with the shifts $u=u'+1$ and $u=u'-1$ and then subtract one from the other, we obtain:
    \begin{equation}\label{g5}
        \tilde{c}_{21}(u'+2)-\tilde{c}_{21}(u'-2)=\tilde{c}_{11}(u'+3)-\tilde{c}_{11}(u'-3), \forall u'\in\ZZ.
    \end{equation}
    Thus, for every $\ell\geq 1$, we can apply \eqref{g5} in the computation of $(B\Lambda)_{(2,s)(1,r)}$, and we obtain 
    \[(B\Lambda)_{(2,s)(1,r)}=0.\]
    We continue by computing:
    \begin{align*}
        (B\Lambda)_{(1,r)(1,s)}=&\lambda_{(1,r+6)(1,s)}-\lambda_{(1,r-6)(1,s)}+\lambda_{(2,r-3)(1,s)}-\lambda_{(2,r+3)(1,s)}=\\
        =&\big(\tilde{c}_{11}(m+3)+\tilde{c}_{11}(m-3)-\tilde{c}_{21}(m)\big)+\big(\tilde{c}_{11}(-m+3)+\tilde{c}_{11}(-m-3)-\tilde{c}_{21}(-m)\big).
    \end{align*}
    If $r\neq s$, i.e. $m\neq 0$, by \eqref{g2} we conclude that $(B\Lambda)_{(1,r)(1,s)}=0$. Otherwise, we have 
    \begin{align*}
        (B\Lambda)_{(1,r)(1,r)}=2\tilde{c}_{11}(3)=2,
    \end{align*}
    using the results from Proposition \ref{prop tecnica}. Finally we compute 
    \begin{align*}
        (B\Lambda)_{(2,r)(2,s)}=&\lambda_{(2,r+2)(2,s)}-\lambda_{(2,r-2)(2,s)}+\lambda_{(1,r-3)(2,s)}-\lambda_{(1,r+3)(2,s)}=\\
        =&\big(\tilde{c}_{22}(m+1)+\tilde{c}_{22}(m-1)-\tilde{c}_{12}(m+2)-\tilde{c}_{12}(m)-\tilde{c}_{12}(m-2)\big)+\\&+\big(\tilde{c}_{22}(-m+1)+\tilde{c}_{22}(-m-1)-\tilde{c}_{12}(-m+2)-\tilde{c}_{12}(-m)-\tilde{c}_{12}(-m-2)\big).
    \end{align*}
    For $s\neq r$, i.e. $m\neq0$ we conclude $(B\Lambda)_{(2,r)(2,s)}=0$ using \eqref{g4}. If $r=s$, i.e. $m=0$, then we have
    \[(B\Lambda)_{(2,r)(2,r)}=2\tilde{c}_{22}(1)=2,\] where again we have used Proposition \ref{prop tecnica}. Thus, we have proved that for underlying $\g$ of type $G_2$ we have
    \[B\Lambda=2\mathrm{Id}_{V'},\] so the couple $(B,\Lambda)$ is compatible.

\textcolor{purple}{Compatibility for all the other types.}\\
Since we are not considering type $G_2$, the sum in \eqref{blambda} simplifies to:
\begin{equation}\label{blambda no g2}
    \begin{split}
    (B\Lambda)_{(i,r)(j,s)}=&\lambda_{(i,r+2r_i)(j,s)}-\lambda_{(i,r-2r_i)(j,s)}+\sum_{\{k\in I\mid C_{ik}=-1\}}(\lambda_{(k,r-r_i)(j,s)}-\lambda_{(k,r+r_i)(j,s)})+ \\
        &+\sum_{\{k\in I\mid C_{ik}=-2\}}(\lambda_{(k,r-2r_i)(j,s)}-\lambda_{(k,r+2r_i)(j,s)}). 
\end{split}
\end{equation}
\textit{Notation}. Note that in \eqref{g2 somma 2} we have separated the summands appearing in the definition of each entry of $\Lambda$, so that we have first all terms of the form $\tilde{c}_{h,k}(m+...)$ and then all terms of the form $\tilde{c}_{h,k}(-m+...)$. We do the same in the general case and we refer to the second block of terms  (those of type $\tilde{c}_{h,k}(-m+...)$) as $S(-m)$. To make the text more readable, we omit the computations for $S(-m)$, as it can be treated using the same approach as the first part of the sum (just like in $G_2$).

As before, we keep denoting $m=s-r$. By definition of $\Lambda$ this is equivalent to
\begin{align*}
    (B\Lambda)_{(i,r)(j,s)}=&\sum_{\ell\geq1}\left(-\tilde{c}_{ij}(m-2r_i+r_j(-2\ell+1))+\tilde{c}_{ij}(m+2r_i+r_j(-2\ell+1))\right)+\\+&\sum_{\ell\geq 1}\left(\sum_{\{k\in I \mid C_{ik}=-1\}}-\tilde{c}_{kj}(m+r_i+r_j(-2\ell+1))+\tilde{c}_{kj}(m-r_i+r_j(-2\ell+1))\right)+\\+&\sum_{\ell\geq 1}\left(\sum_{\{k\in I\mid C_{ik}=-2\}}-\tilde{c}_{kj}(m+2r_i+r_j(-2\ell+1))+\tilde{c}_{kj}(m-2r_i+r_j(-2\ell+1))\right)+\\+&S(-m).
\end{align*}
We divide our discussion depending on the values of $r_i,r_j$. However, the computations have the same structure as in type $G_2$: we find that the sum above is telescopic and then apply Lemma \ref{relation quantum cartan}. 

\textcolor{purple}{Case $r_i=r_j=1$.}\\
In this case we have
\begin{align*}
    (B\Lambda)_{(i,r)(j,s)}=&\sum_{\ell\geq1}\left(-\tilde{c}_{ij}(m-1+-2\ell)+\tilde{c}_{ij}(m+3+-2\ell)\right)+\\&+\sum_{\ell\geq 1}\left(\sum_{\{k\in I\mid C_{ik}=-1\}}-\tilde{c}_{kj}(m+2-2\ell)+\tilde{c}_{kj}(m-2\ell)\right)+\\&+\sum_{\ell\geq 1}\left(\sum_{\{k\in I\mid C_{ik}=-2\}}-\tilde{c}_{kj}(m+3-2\ell)+\tilde{c}_{kj}(m-1+-2\ell)\right)+\\+& S(-m).
\end{align*}
All of these three sums (and those in $S(-m)$) are telescopic and the remaining terms are
\begin{align*}
    (B\Lambda)_{(i,r)(j,s)}=\tilde{c}_{ij}(m+1)+\tilde{c}_{ij}(m-1)-\sum_{\{k\in I\mid C_{ik}=-1\}}\tilde{c}_{kj}(m)-\sum_{\{k\in I\mid C_{ik}=-2\}}\tilde{c}_{kj}(m+1)+\tilde{c}_{kj}(m-1)+S(-m).
\end{align*}
When $(i,r)\neq (j,s)$, we can apply Lemma \ref{relation quantum cartan} and conclude that $(B\Lambda)_{(i,r)(j,s)}=0$. Otherwise, if $(i,r)=(j,s)$, applying Proposition \ref{prop tecnica}, we have two contributions of $\tilde{c}_{ii}(1)$ (one coming from $S(-m)$) and we conclude that
\[(B\Lambda)_{(i,r)(i,r)}=2\tilde{c}_{ii}(1)=2.\]

\textcolor{purple}{Case $r_i=1, r_j=2$.}\\
We are considering two distinct nodes in the Dynkin diagram and we are computing an off-diagonal entry of $B\Lambda$. Equation \eqref{blambda no g2} becomes:
\begin{align}\label{somma caso 2}
    (B\Lambda)_{(i,r)(j,s)}=&\sum_{\ell\geq1}\Big(-\tilde{c}_{ij}(m-2+2(-2\ell+1))+\tilde{c}_{ij}(m+2+2(-2\ell+1))+\\&+\sum_{\{k\in I\mid C_{ik}=-1\}}-\tilde{c}_{kj}(m+1+2(-2\ell+1))+\tilde{c}_{kj}(m-1+2(-2\ell+1))+\nonumber\\&+\sum_{\{k\in I\mid C_{ik}=-2\}}-\tilde{c}_{kj}(m+2+2(-2\ell+1))+\tilde{c}_{kj}(m-2+2(-2\ell+1))\Big)+\nonumber\\& +S(-m)\nonumber.
\end{align}
Let us now consider Lemma \ref{relation quantum cartan} for indices $i,j$. We have:
\begin{equation}\label{caso 2}
    \tilde{c}_{ij}(u+1)+\tilde{c}_{ij}(u-1)-\sum_{\{k\in I\mid C_{ik}=-1\}}\tilde{c}_{kj}(u)-\sum_{\{k\in I\mid C_{ik}=-2\}}\left(\tilde{c}_{kj}(u+1)+\tilde{c}_{kj}(u-1)\right)=0
\end{equation}
Rewriting \eqref{caso 2} shifting the arguments by $\pm 1$, and subtracting one equation from the other, we obtain
\begin{equation}\label{caso 2 2}
    \tilde{c}_{ij}(u+2)-\tilde{c}_{ij}(u-2)+\sum_{\{k\in I\mid C_{ik}=-1\}}\left(-\tilde{c}_{kj}(u+1)+\tilde{c}_{kj}(u-1)\right)-\sum_{\{k\in I\mid C_{ik}=-2\}}\left(-\tilde{c}_{kj}(u+2)+\tilde{c}_{kj}(u-2)\right)=0.
\end{equation}
For every $\ell$ in the sum \eqref{somma caso 2} we apply \eqref{caso 2 2}, with $u=m+2(-2\ell+1)$, and this let us conclude that the global sum is $0$. Thus, we have proved 
\[(B\Lambda)_{(i,r)(j,s)}=0\] for these values of $r_i$ and $r_j$.

\textcolor{purple}{Case $r_i=2, r_j=1$.}\\
Before doing any computation, observe that this hypothesis implies $C_{ji}=-2$, hence the subset of $k\in I$ such that $C_{ik}=-2$ is empty. This simplifies the sum that we are considering. We have:

\begin{align*}
    (B\Lambda)_{(i,r)(j,s)}=&\sum_{\ell\geq1}\left(-\tilde{c}_{ij}(m-3-2\ell)+\tilde{c}_{ij}(m+5-2\ell)\right)+\\+&\sum_{\ell\geq 1}\left(\sum_{\{k\in I\mid C_{ik}=-1\}}-\tilde{c}_{kj}(m+3-2\ell)+\tilde{c}_{kj}(m-1-2\ell)\right)+ S(-m).
\end{align*}
Again, these sums are telescopic and we are left with
\begin{align*}
    (B\Lambda)_{(i,r)(j,s)}&=\tilde{c}_{ij}(m+1)+\tilde{c}_{ij}(m-1)+\tilde{c}_{ij}(m+3)+\tilde{c}_{ij}(m-3)-\\&-\sum_{\{k\in I\mid C_{ik}=-1\}}\left(\tilde{c}_{kj}(m+1)+\tilde{c}_{kj}(m-1)\right)+ S(-m).
\end{align*}
On the other hand, in this setting Lemma \ref{relation quantum cartan} reads:
\begin{equation}\label{caso3}
    \tilde{c}_{ij}(u-2)+\tilde{c}_{ij}(u+2)-\sum_{\{k\in I\mid C_{ik}=-1\}}\tilde{c}_{kj}(u)=0.
\end{equation} Thus, plugging relation \eqref{caso3} in our sum, with the appropriate shifts $u=m\pm 1$, we obtain that $(B\Lambda)_{(i,r)(j,s)}=0$.

\textcolor{purple}{Case $r_i=r_j=2$.}\\
Note that $r_i=2$ implies that there is no $k\in I$ such that $C_{ik}=-2$. Substituting the values $r_i=r_j=2$, we get:
\begin{equation}\label{caso 4}
\begin{split}
    (B\Lambda)_{(i,r)(j,s)} =& \sum_{\ell \ge 1} \Big( 
        -\tilde{c}_{ij}(m-2-4\ell) + \tilde{c}_{ij}(m+6-4\ell)+ \sum_{\{k\mid C_{ik}=-1\}}\big(-\tilde{c}_{kj}(m+4-4\ell) + \tilde{c}_{kj}(m-4\ell)\big) 
    \Big)\\&+ S(-m).
\end{split}
\end{equation}
We observe that the sum is telescopic, leaving us with
\begin{equation}\label{caso 4 2}
    (B\Lambda)_{(i,r)(j,s)}=\tilde{c}_{ij}(m+2)+\tilde{c}_{ij}(m-2)+\sum_{\{k\in I\mid C_{ik}=-1\}}\tilde{c}_{kj}(m)+S(-m).
\end{equation}
As in the previous cases, we can apply Lemma \ref{relation quantum cartan} to conclude that if $(i,r)\neq(j,s)$, then $(B\Lambda)_{(i,r)(j,s)}=0$. Finally, if $(i,r)=(j,s)$, we get
\[(B\Lambda)_{(i,r)(i,r)}=2\tilde{c}_{ii}(2)=2.\]

We have shown that every diagonal entry of $B\Lambda$ is equal to $2$ and every off-diagonal entry is zero. This concludes the proof.
\end{proof}
Let us comment about this result. First we give a definition:
\begin{definition}
    We define $\mathcal{A}'_t$ to be the quantum cluster algebra with an initial seed given by the quiver $\Gamma$ and quantization matrix $\Lambda$.
\end{definition}
By construction, $\mathcal{A}'_t$ is a subring of $\mathcal{Y}'_t$, and in simply-laced types it coincides with the quantum cluster algebra constructed in \cite{b21}. In the same work, the author defined the quantum Grothendieck ring for $\mathcal{O}^{\mathfrak{b},+}_{\ZZ}$ as a completion of such quantum cluster algebra, and proved that it contains $K_t(\mathcal{C}_{\ZZ})$, the quantum Grothendieck ring for finite-dimensional modules defined in \cite{nak04} and \cite{vv}. Thus, our generalization of the definition of $\mathcal{A}'_t$ suggests to prove that also in non simply-laced setting we can recover this inclusion of rings. We plan to do this in a forthcoming paper. 
%\section{Perspectives}\label{perspectives}
%\input{perspectives}

\printbibliography[heading=bibintoc]

\begin{tabular}{ll}
FRANCESCA PAGANELLI &
DEPARTMENT S.B.A.I.\\ & SAPIENZA-UNIVERSITÀ DI ROMA\\
& VIA SCARPA 10, 00164, ROMA (ITALY) \\
& \&\\
 &UNIVERSITÉ PARIS CITÉ, SORBONNE UNIVERSITÉ\\
& CNRS, IMJ-PRG\\
& 8 PL. AURÉLIE NEMOURS, 75013, PARIS (FRANCE)\\
email: {\tt francesca.paganelli@uniroma1.it}\\[5mm]

\end{tabular}

\end{document}